\definecolor{orange}{rgb}{1,0.5,0}
\DeclareMathAlphabet{\mathpzc}{OT1}{pzc}{L}{it} 
\theoremstyle{definition}
\newtheorem{definition}{Definition}[section]
\newtheorem{theorem}[definition]{Theorem}
\newtheorem{proposition}[definition]{Proposition}
\newtheorem{corollary}[definition]{Corollary}
\newtheorem{lemma}[definition]{Lemma}
\newtheorem{example}[definition]{Example}
\newtheorem{remark}[definition]{Remark}
\def\C{\mathbb{C}}
\def\cP{\mathcal{P}}
\def\geq{\geqslant}
\def\leq{\leqslant}
\def\R{\mathbb{R}}
\def\Z{\mathbb{Z}}
\def\N{\mathbb{N}}
\def\cB{\mathcal{B}}
\def\epsilon{\varepsilon}
\def\mf{\mathfrak}
\newcommand{\bea}{\begin{eqnarray}}
  \newcommand{\eea}{\end{eqnarray}}
  \newcommand{\beab}{\begin{eqnarray*}}
  \newcommand{\eeab}{\end{eqnarray*}}
  \newcommand{\be}{\begin{equation}}
  \newcommand{\ee}{\end{equation}}
\title{Bernoulli property for homogeneous systems}
\author{Adam Kanigowski}
\date{}
\begin{document}
\maketitle

\begin{abstract}
Let $G$ be a semisimple Lie group with Haar measure $\mu$ and let $\Gamma$ be an irreducible lattice in $G$. {}{For $g\in G$, we consider left translation $L_g$ acting on $(G\slash \Gamma,\mu)$. We show that if $L_g$ is $K$ (which is equivalent to positive entropy of $L_g$) then $L_g$ is a Bernoulli automorphism. As a corollary, we also obtain analogous results for  homogeneous flows}.
\end{abstract}
\tableofcontents
\section{Introduction}
In the theory of dynamical systems, the most random systems are so called Bernoulli shifts, coming from {\em symbolic dynamics}, and determined by independent stationary processes. Smooth systems, by their nature, are not of the above symbolic form. One of the main discoveries in the theory of smooth dynamical systems in the second half of the twentieth century is their possible randomness whose strongest manifestation is expressed by a measure-theoretic isomorphism to a system determined by a stationary independent process. Such systems are called Bernoulli. Another important notion which ``measures'' chaoticity of a system is the $K$ property, introduced by Kolmogorov. By \cite{RoSi}, $K$ property is  equivalent to {\em completely positive entropy}, i.e.\ every non-trivial factor of the system has {\em positive entropy}. Since factors of Bernoulli shifts are Bernoulli, \cite{Orn2}, it follows that Bernoulli systems enjoy the $K$ property. Kolmogorov conjectured that the converse should also be true. This however was disproved by Ornstein in \cite{Orn3}. Moreover, the work of Katok, \cite{Kat6}, provided counterexamples in the smooth category. On the other hand, for many classes of smooth dynamical systems $K$ property implies Bernoullicity. Indeed, Katznelson, \cite{Katznelson}, proved that ergodic toral automorphisms are Bernoulli and this result was then extended by Lind, \cite{Lind}, to  infinite dimensional toral automorphisms. Thomas and Miles, \cite{TM1}, \cite{TM2} and independently Aoki, \cite{AN}, showed that ergodic automorphisms of compact groups are Bernoulli. Bowen, \cite{Bowen}, established the Bernoulli property for Axiom $A$ diffeomorphisms.  Ledrappier, \cite{Ledrappier}, showed that quadratic maps with absolutely continuous invariant measure enjoy the Bernoulli property.

The above principle that ``natural'' smooth systems which are $K$ are in fact Bernoulli was confirmed in the class of flows. Ornstein and Weiss, \cite{OrnsteinWeiss}, proved that geodesic flows on surfaces of constant negative curvature are Bernoulli. This result was extended by Pesin, \cite{Pesin}  to higher dimensional manifolds (without focal points). Ratner, \cite{Ratner111}, established the Bernoulli property for $C^2$ Anosov flows. Then Ratner, \cite{Ratner101}, and Bunimovich, \cite{Bunimovich}, showed that weakly mixing suspensions over Anosov automorphisms are Bernoulli. For  non-uniformly hyperbolic maps and flows (with singularities) the Bernoulli property was proved by Chernov and Haskell, \cite{Chernov}. Moreover, Katok  in \cite{Kat1} constructed smooth Bernoulli systems on any smooth surface; this was later extended to all manifolds in \cite{BFK}.

It is not hard to notice that a fundamental class of smooth systems, namely, the class of {\em homogeneous systems} on quotients of semisimple Lie groups is not listed above. Indeed, the equivalence of $K$ and Bernoulli properties has not been known in it.  S.\ G.\ Dani conjectured in the mid 1970's that the $K$ and Bernoulli properties are equivalent for homogeneous systems (see also (1) in \cite{Orn10}). In \cite{Dani2}, \cite{Dani3}, {}{Dani proved a special case of this conjecture, namely, assuming additionally that the {\em adjoint operator}  of the system is diagonalizable on the central space (we explain the details in Section \ref{subLe}). Since then, there has been no progress on the general case of Dani's conjecture although it often reappeared in the literature, e.g.\ Conjecture 3.2.\ in \cite{Morris}, Section 2.3.\ b in \cite{KSzach}, Problem 37 in \cite{RRR}, Question 11.11 in \cite{Hasel}}. The main aim of this paper is to prove Dani's conjecture in full generality. More precisely, we have the following result:
\begin{theorem}\label{thm:main1}
Let $G$ be a semisimple Lie group, $\Gamma$ an irreducible lattice in $G$ and let $L_g$ be a left translation by $g\in G$ on  $(G\slash \Gamma,\mu)$ that enjoys the $K$ property. Then $L_g$ is a Bernoulli automorphism.
\end{theorem}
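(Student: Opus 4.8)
The natural strategy is to reduce everything to the theorem of Ornstein–Weiss / Ratner-type machinery for hyperbolic systems, via the structure theory of the adjoint action. Given $g\in G$, decompose $\mathfrak g = \mathfrak g^- \oplus \mathfrak g^0 \oplus \mathfrak g^+$ according to whether $\mathrm{Ad}(g)$ has eigenvalues of modulus $<1$, $=1$, or $>1$. The subspaces $\mathfrak g^\pm$ integrate to the stable/unstable horospherical subgroups $G^\pm$, which are the stable and unstable manifolds for $L_g$ on $G/\Gamma$; the partition into $G^-$, $G^0$, $G^+$ orbits gives a (non-smooth in general) hyperbolic structure. The $K$ property forces $\mathfrak g^\pm \neq 0$ (positive entropy), so $L_g$ is a genuinely partially hyperbolic translation, and the central direction $\mathfrak g^0$ is the obstruction: on $\mathfrak g^0$, $\mathrm{Ad}(g)$ need not be diagonalizable — precisely the case Dani could not handle.

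**Key steps, in order.** First I would set up the abstract criterion: a $K$-automorphism which is \emph{very weak Bernoulli} (VWB) with respect to a generating partition is Bernoulli (Ornstein's theory). For homogeneous $L_g$ one instead uses the geometric route: if the stable and unstable horospherical foliations, together with the central foliation, admit a local product structure and the conditional measures along $G^\pm$ are (up to the dynamics) the Haar measures on $G^\pm$, then one runs the Ornstein–Weiss argument — fibered over the central direction — to produce a VWB partition. The central fibers carry the flow by $G^0$; the crucial input is that the $L_g$-action on these central leaves is an isometry in a suitable (possibly non-Riemannian, polynomially distorted) metric coming from the unipotent part of $\mathrm{Ad}(g)|_{\mathfrak g^0}$. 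Second, I would handle the non-diagonalizable central action: write $\mathrm{Ad}(g)|_{\mathfrak g^0} = S\cdot U$ with $S$ semisimple of modulus-one eigenvalues and $U$ unipotent, commuting. The subtlety is that $U\neq\mathrm{id}$ means orbits spread polynomially inside $\mathfrak g^0$, so the central foliation is not isometric — but it is \emph{polynomially controlled}, and one shows this polynomial drift is harmless for the Bernoulli coding because it is sub-exponential relative to the genuine expansion/contraction on $\mathfrak g^\pm$. Third, I would invoke irreducibility of $\Gamma$ and the Howe–Moore / Mautner phenomenon to get the requisite mixing and ergodicity of the relevant subactions (in particular of $G^0$ acting on the central leaves, and of $G^\pm$), which is what feeds the entropy/partition estimates. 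Finally, assemble: the local product structure plus polynomial central control plus mixing give the VWB estimate, hence Bernoullicity.

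**Main obstacle.** The heart of the matter — and where the novelty must lie — is the non-diagonalizable central direction. When $\mathrm{Ad}(g)|_{\mathfrak g^0}$ is diagonalizable (Dani's case) the central leaves carry an honest isometric action and the fibered Ornstein–Weiss argument goes through verbatim. When it is not, one must show that the unipotent (polynomial) distortion along $\mathfrak g^0$ does not destroy the entropy-matching and the $\bar d$-closeness of conditional distributions that VWB requires. I expect the argument here to quantify the polynomial drift precisely — bounding, over a window of $n$ iterates, the $\mathrm{Ad}$-distortion on $\mathfrak g^0$ by a polynomial in $n$ — and then to absorb it against the exponential scales on $\mathfrak g^\pm$, using that the entropy of $L_g$ comes entirely from $\mathfrak g^\pm$ (the central direction contributes zero entropy). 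A secondary technical point is making the local product structure and the Ornstein–Weiss coding work on $G/\Gamma$ which is typically non-compact: one needs a priori bounds on excursions into the cusp, i.e.\ some non-divergence (Dani–Margulis type) estimate to control the measure of bad pieces.
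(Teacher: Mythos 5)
Your setup coincides with the paper's: the $\mathrm{Ad}(g)$-decomposition $\mathfrak g=\mathfrak g^-\oplus\mathfrak g^0\oplus\mathfrak g^+$, the reduction to the very weak Bernoulli property via the Ornstein--Weiss geometric method, and the correct identification of the non-diagonalizable central direction as the heart of the matter. But the proposal stops exactly where the proof has to begin. The VWB criterion requires, for each atom $A$ of a long past $\bigvee_N^{N'}\Psi^i\mathcal P$ and each future length $S$, an $\epsilon$-measure-preserving map $\theta$ such that the forward $S$-names of $x$ and $\theta x$ agree for a $(1-\epsilon)$ proportion of times; after the standard local-product reduction this becomes the problem of matching two nearby local unstable leaves $W^u(\delta,z)$ and $W^u(\delta,z')$. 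The center-stable holonomy produces a matching whose central displacement has size $\approx\delta$, independent of $S$; under $\mathrm{Ad}(g)^n$ this displacement grows like $n^{r}$, so for large $S$ the matched orbits separate after time $\approx\delta^{-1/r}\ll S$ and the $\bar d$-estimate fails. Saying the polynomial drift is ``sub-exponential relative to the expansion on $\mathfrak g^\pm$'' or can be ``absorbed against the exponential scales'' does not address this: the comparison relevant to VWB is between the central drift over the whole window $[0,S]$ and the fixed partition scale, not between central and hyperbolic Lyapunov exponents, and the vanishing of central entropy plays no role in the name-matching. (Also, a quasi-unipotent operator with a nontrivial Jordan block preserves no norm on $\mathfrak g^0$, so the ``isometry in a polynomially distorted metric'' cannot be made literal.)

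The missing idea, which is the paper's actual contribution, is to build the matching from quantitative equidistribution of the unstable horospherical subgroup rather than from holonomy between the original leaves. One tiles a large-measure compact part of $M$ by cubes of polynomially small size $\approx N^{-r'}$, where $N=[\epsilon^3 S]$, and uses exponential mixing (Howe--Moore type decay, via the Kleinbock--Margulis argument; this is where irreducibility of $\Gamma$ enters) to show that $\Psi^N W^u(\delta,z)$ and $\Psi^N W^u(\delta,z')$ each equidistribute among these cubes up to an $\epsilon^2$ error --- exponential decay beats the polynomially fine scale. One then pairs the exponentially small pieces of the two leaves that land in the same cube and applies the holonomy only inside that cube, so the resulting central displacement is $O(N^{-r'})$ and, by the polynomial bound on $\mathrm{Ad}(g)^s|_{\mathfrak g^0\oplus\mathfrak g^-}$, the matched orbits stay $\epsilon$-close for all $s\in[0,S]$. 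Your secondary concern about cusp excursions is handled much more simply in the paper, by restricting all constructions to a compact set $K_\epsilon$ of measure $1-\epsilon$; no Dani--Margulis non-divergence input is needed. Without the equidistribution-based matching (or an equivalent mechanism) the proposal does not yield the VWB estimate.
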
 

Obviously, a necessary condition for homogeneous systems on irreducible quotients to be $K$ is {\em positive entropy}. In \cite{Dani1}, Dani showed that this necessary condition is also sufficient for the $K$ property. {}{Therefore, Theorem \ref{thm:main1} has the following consequence on the Bernoulli property for homogeneous flows.
\begin{corollary}\label{cor:main1} Let $G$ be a semisimple Lie group, $\Gamma$ an irreducible lattice in $G$ and let $(\Psi_t)$ be a homogeneous flow on $(G\slash \Gamma,\mu)$ with positive entropy. Then $(\Psi_t)$ is a Bernoulli flow. 
\end{corollary}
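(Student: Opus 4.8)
The plan is to deduce Corollary \ref{cor:main1} from Theorem \ref{thm:main1} by a short reduction. A homogeneous flow on $(G/\Gamma,\mu)$ is, by definition, the left action of a one-parameter subgroup: $\Psi_t = L_{g_t}$ where $g_t = \exp(tX)$ for some $X\in\Lie(G)$. If $X=0$ the flow is trivial and has zero entropy, contrary to hypothesis, so we may assume $X\neq 0$, and then $g_t\neq e$ for every $t\neq 0$. Thus each time-$t$ map is of the form to which Theorem \ref{thm:main1} applies, the lattice $\Gamma$ being the same irreducible lattice throughout.

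The key steps, in order, would be: (i) observe that every $\Psi_t$ with $t\neq 0$ has positive entropy — this is the standard scaling $h_\mu(\Psi_t)=|t|\,h_\mu(\Psi_1)$ for flows, together with the fact that ``$(\Psi_t)$ has positive entropy'' means precisely $h_\mu(\Psi_1)>0$; (ii) invoke Dani's criterion from \cite{Dani1}, namely that for a left translation on an irreducible quotient positive entropy is equivalent to the $K$ property, to conclude that each $L_{g_t}$, $t\neq 0$, is a $K$-automorphism; (iii) apply Theorem \ref{thm:main1} to $L_{g_t}$ to obtain that $\Psi_t$ is a Bernoulli automorphism for every $t\neq 0$; (iv) conclude that $(\Psi_t)$ is a Bernoulli flow, since a measure-preserving flow all of whose nonzero time-$t$ maps are Bernoulli is by definition a Bernoulli flow. (By Ornstein's theory it would in fact already suffice to have established step (iii) for a single $t_0\neq 0$.)

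I do not expect a genuine obstacle here: essentially all of the difficulty is absorbed into Theorem \ref{thm:main1}, and the reduction only uses the elementary scaling of entropy under time change, Dani's equivalence, and the definition of a Bernoulli flow. The one spot that in general requires care is the implication ``some time-$t_0$ map is Bernoulli $\Rightarrow$ the flow is Bernoulli'', which rests on the facts that roots of Bernoulli automorphisms are Bernoulli and on Ornstein's isomorphism theorem for flows; but since steps (i)--(iii) already yield Bernoullicity of $\Psi_t$ for \emph{every} $t\neq 0$, this delicate passage is not needed in the present argument.
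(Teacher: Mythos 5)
Your reduction is correct and is essentially the argument the paper intends: positive entropy of the flow gives positive entropy of each time-$t$ map with $t\neq 0$ (by the entropy scaling), hence the $K$ property by Dani's theorem \cite{Dani1}, so Theorem \ref{thm:main1} applies to each $L_{g_t}$ and the flow is Bernoulli by the paper's definition of a Bernoulli flow. There is no gap.
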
}

{}{
Theorem \ref{thm:main1} and Corollary \ref{cor:main1} by Ornstein's theory \cite{Orn} have the following consequence:
\begin{corollary}\label{cor:2}Entropy is a full invariant of isomorphism in the class of positive entropy homogeneous systems on irreducible quotients of semisimple Lie groups.
 \end{corollary}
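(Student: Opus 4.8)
The plan is to deduce Corollary \ref{cor:2} directly from Theorem \ref{thm:main1}, Corollary \ref{cor:main1}, and Ornstein's classification of Bernoulli systems \cite{Orn}. One direction is formal: Kolmogorov--Sinai entropy is invariant under measure-theoretic isomorphism (and, for flows, under isomorphism of flows, via Abramov's formula relating the entropies of the time-$t$ maps), so two isomorphic homogeneous systems necessarily have the same entropy.

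For the converse, I would take two positive-entropy homogeneous systems, say $L_{g_1}$ on $(G_1/\Gamma_1,\mu_1)$ and $L_{g_2}$ on $(G_2/\Gamma_2,\mu_2)$ with $G_i$ semisimple and $\Gamma_i$ irreducible, and assume $h(L_{g_1})=h(L_{g_2})$. By Dani's theorem \cite{Dani1}, on an irreducible quotient of a semisimple Lie group positive entropy of $L_g$ is equivalent to the $K$ property; hence both $L_{g_1}$ and $L_{g_2}$ are $K$, and Theorem \ref{thm:main1} then upgrades this to Bernoullicity. Now Ornstein's isomorphism theorem asserts that two Bernoulli automorphisms of equal entropy are measure-theoretically isomorphic, which produces the desired conjugacy $L_{g_1}\cong L_{g_2}$. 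For the flow version of the statement one repeats the argument verbatim: Corollary \ref{cor:main1} gives that positive-entropy homogeneous flows are Bernoulli flows, and the flow form of Ornstein's theory (two Bernoulli flows of the same entropy are isomorphic as flows) finishes the job.

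I expect no genuine obstacle here, since all the substance is contained in Theorem \ref{thm:main1} and in Dani's equivalence of positive entropy with $K$. The only items needing a line or two of care are bookkeeping ones: first, that the entropy of a homogeneous system on a finite-volume quotient is finite — this follows from the Margulis--Ruelle inequality, as $h(L_g)$ is bounded above by the sum of the positive Lyapunov exponents, i.e.\ by $\sum_{|\lambda|>1}\log|\lambda|$ over the eigenvalues $\lambda$ of $\mathrm{Ad}(g)$ — so that the finite-entropy form of Ornstein's theorem is the one that applies; and second, invoking the correct version of Ornstein's theory (single transformations versus flows) in each of the two cases.
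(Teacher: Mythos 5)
Your argument is correct and is exactly the paper's intended deduction: the paper states Corollary \ref{cor:2} as an immediate consequence of Theorem \ref{thm:main1}, Corollary \ref{cor:main1}, Dani's equivalence of positive entropy with the $K$ property, and Ornstein's isomorphism theorem, which is precisely the chain you spell out. Your added remarks on finiteness of entropy and on distinguishing the automorphism versus flow versions of Ornstein's theory are sensible bookkeeping and do not change the route.
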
}
 
{}{We also have the following corollary for general homogeneous systems (without the irreducibility assumption on the lattice):
\begin{corollary} Let $L_g$ be a weakly mixing translation on $(G\slash \Gamma,\mu)$. Then either $L_g$ is Bernoulli or it has a zero entropy homogeneous factor.
\end{corollary}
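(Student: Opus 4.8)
The plan is to reduce to the irreducible case covered by Theorem~\ref{thm:main1} by cutting $G$ along the decomposition of $\Gamma$ into irreducible constituents. By the standard structure theory of lattices in semisimple Lie groups, I would write $G = H_1\cdots H_m$ as an almost direct product of connected normal subgroups such that $\Gamma_j := \Gamma\cap H_j$ is an irreducible lattice in $H_j$ and $\Gamma_0 := \Gamma_1\cdots\Gamma_m$ has finite index in $\Gamma$. Writing $g = h_1\cdots h_m$ with $h_j\in H_j$, passing to the finite central cover $\widetilde G = H_1\times\cdots\times H_m \to G$ (under which $G/\Gamma_0$ becomes a homogeneous space of $\widetilde G$), and then to a finite-index product sublattice $\prod_j\widetilde\Gamma_j$, one obtains a chain of finite-to-one measure-preserving factor maps $\prod_{j=1}^m (H_j/\widetilde\Gamma_j, L_{h_j}) \to (G/\Gamma_0, L_g) \to (G/\Gamma, L_g)$, in which each $\widetilde\Gamma_j$ is an irreducible lattice in $H_j$ commensurable with $\Gamma_j$. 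In particular $(G/\Gamma, L_g)$ is a measure-theoretic factor of the product system. Since the entropy of a homogeneous translation is determined by the eigenvalues of the adjoint action of the translating element alone (so it is unchanged under replacing a lattice by a commensurable one), and since factors of Bernoulli systems are Bernoulli \cite{Orn2}, none of these reductions costs anything.

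I would then classify the constituents. Each $L_{h_j}$ acts on the \emph{irreducible} quotient $H_j/\widetilde\Gamma_j$; by Dani's theorem \cite{Dani1}, on such a quotient positive entropy is equivalent to the $K$ property, so $L_{h_j}$ either has zero entropy or has positive entropy, in which case it is $K$ and hence Bernoulli by Theorem~\ref{thm:main1}. Let $S$ collect the indices with $L_{h_j}$ of positive entropy and let $N$ be the complement. If $N=\emptyset$, then $\prod_{j}(H_j/\widetilde\Gamma_j, L_{h_j})$ is a finite product of Bernoulli automorphisms, hence Bernoulli by Ornstein's theory \cite{Orn}, and therefore so is its factor $(G/\Gamma, L_g)$, giving the first alternative. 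If $N\neq\emptyset$, I set $H_S := \prod_{j\in S} H_j$, a closed normal subgroup of $G$, and check that $H_S\Gamma$ is closed: it is a finite union of translates of $H_S\Gamma_0 = H_S\cdot\prod_{j\in N}\Gamma_j$, which is the full preimage of a lattice under the projection $G\to G/H_S$. Then $Y := G/(H_S\Gamma)$ is a homogeneous space on which $L_g$ acts by a translation, the canonical projection $G/\Gamma \to Y$ is measure-preserving and equivariant, so $(Y, L_g)$ is a homogeneous factor of $(G/\Gamma, L_g)$; and up to finite covers $Y$ is a quotient of $\prod_{j\in N} H_j$ by a lattice with $L_g$ acting as $\prod_{j\in N} L_{h_j}$, which has zero entropy since each of its coordinates does. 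This is the second alternative (and covers in particular the degenerate case $Y = G/\Gamma$).

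I expect the only delicate points to be bookkeeping rather than dynamics: arranging the finite central cover so that the almost direct product $H_1\cdots H_m$ becomes a genuine direct product, keeping track of the commensurable lattices $\widetilde\Gamma_j$ and checking that irreducibility is preserved, verifying that $H_S\Gamma$ is closed so that $Y$ is a bona fide homogeneous space, and confirming that every map above is a genuine measure-preserving factor map, so that the Bernoulli and zero-entropy properties propagate in the directions used. The weak mixing hypothesis itself plays only a supporting role — it is inherited by each constituent $L_{h_j}$ and makes the alternatives a sharp dichotomy (a homogeneous translation failing weak mixing automatically carries a nontrivial zero-entropy homogeneous factor and so lies in the second case); all the genuine content is imported from Theorem~\ref{thm:main1}, from \cite{Dani1}, and from Ornstein's isomorphism theory.
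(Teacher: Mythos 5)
Your proposal is correct, but it runs the reduction in the opposite direction from the paper, and the difference is worth spelling out. The paper realizes $(G/\Gamma,L_g)$ as a \emph{finite extension} of the product $\prod_i(G_i/\Gamma_i,L_{g_i})$ of irreducible quotients (via the coordinate projections $G/\Gamma\to G_i/\pi_i(\Gamma)$, with $\pi_i(\Gamma)$ an irreducible lattice commensurable with $\Gamma\cap G_i$). With that orientation each coordinate is a factor of $L_g$, so a zero-entropy coordinate is immediately the desired zero-entropy homogeneous factor; the cost is that in the positive-entropy case Bernoullicity must be lifted from the product \emph{up} to the finite extension, which is exactly where Rudolph's theorem \cite{Rud} and the weak mixing hypothesis (to exclude finite rotation factors) are used. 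You instead realize $(G/\Gamma,L_g)$ as a \emph{factor} of (a finite cover of) the product, via the finite-index sublattice $\Gamma_1\cdots\Gamma_m\subset\Gamma$ and the finite central cover $\prod_j H_j\to G$; both finite-to-one relationships are genuinely available here (with commensurable lattices on the two sides), so this is legitimate. Your orientation makes the Bernoulli alternative cheaper — only Ornstein's ``factors of Bernoulli shifts are Bernoulli'' \cite{Orn2} is needed, and Rudolph's theorem and the weak mixing hypothesis drop out entirely, as you observe — but it makes the second alternative more expensive, since the zero-entropy coordinates now sit \emph{above} $G/\Gamma$ rather than below it, forcing you to build the homogeneous factor by hand as $Y=G/(H_S\Gamma)$ and to verify that $H_S\Gamma$ is closed (your argument for this, via normality of $H_S$ and the discreteness of the image of $\Gamma_1\cdots\Gamma_m$ in $G/H_S$, is sound, and the entropy of the induced translation on $Y$ vanishes because $\operatorname{Ad}$ of the image of $g$ on $\bigoplus_{j\in N}\mathfrak h_j$ has only unimodular eigenvalues). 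Net effect: your argument is slightly longer on the structural side but actually proves the dichotomy without assuming weak mixing, whereas the paper's one-line argument leans on weak mixing through Rudolph's theorem.
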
}
 Indeed, notice that $(L_g, G\slash \Gamma,\mu)$ is a finite extension of  the system\\
  $(L_{g_1}\times\ldots \times L_{g_k},\prod_{i=1}^kG_i\slash \Gamma_i, \prod_{i=1}^k\mu_i)$, where $G_i$ are semisimple Lie groups, $\Gamma_i$ are irreducible in $G_i$ and $\mu_i$ is the Haar measure on $G_i\slash \Gamma_i$. By Rudolph's result, \cite{Rud}, it follows that if $L_g$ is weakly mixing then $L_g$ is Bernoulli if and only if all $L_{g_i}$ are Bernoulli. This by Corollary \ref{cor:2} is equivalent to positive entropy of every $L_{g_i}$.

{}{Finally it follows from our result that a strong form of {\em Pinsker's conjecture}\footnote{Pinsker's conjecture states that every automorphism is isomorphic to a Cartesian product of a $K$-system and a system of entropy $0$.} is true for homogeneous systems (although it is not true in general, \cite{Orn0000}):
\begin{corollary} Let $L_g$ be a translation on $(G\slash \Gamma,\mu)$. Then $L_g$ is isomorphic to a product of a Bernoulli system and a system of zero entropy. 
\end{corollary}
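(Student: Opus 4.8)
The plan is to reduce, via the structure of lattices in semisimple groups, to a direct product of homogeneous systems on irreducible quotients, to decompose that product using Dani's theorem together with Theorem~\ref{thm:main1}, and finally to transport the product decomposition across a finite‑to‑one factor map.

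As recalled in the proof of the previous corollary, $(L_g, G/\Gamma,\mu)$ is a finite extension of $(L_{g_1}\times\cdots\times L_{g_k},\ \prod_{i=1}^k G_i/\Gamma_i,\ \prod_{i=1}^k\mu_i)$, where each $G_i$ is semisimple, each $\Gamma_i$ is irreducible in $G_i$, and $g_i$ is the component of $g$ in the block $G_i$ of $G$. Reorder so that $h(L_{g_i})>0$ precisely for $i\leq m$. For such $i$, Dani's theorem \cite{Dani1} --- positive entropy is equivalent to the $K$ property on irreducible quotients --- shows that $L_{g_i}$ is $K$, hence Bernoulli by Theorem~\ref{thm:main1}. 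A finite product of Bernoulli automorphisms is Bernoulli and entropy is additive over products, so $\prod_{i=1}^k L_{g_i}\cong B_0\times Z_0$ with $B_0:=\prod_{i\leq m}L_{g_i}$ Bernoulli and $Z_0:=\prod_{i>m}L_{g_i}$ of zero entropy. Thus the product already has the required form, and it remains to pass this decomposition through the finite‑to‑one factor map $q\colon(L_g, G/\Gamma,\mu)\to B_0\times Z_0$.

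Read the tower $(L_g,G/\Gamma)\xrightarrow{q}B_0\times Z_0\to Z_0$ from the base $Z_0$ upward: first the direct‑product extension $B_0\times Z_0\to Z_0$, which is a relatively Bernoulli extension of $Z_0$; then the finite‑to‑one, hence relatively isometric, extension $q$. By the relative version of Rudolph's classification of isometric extensions of a Bernoulli shift \cite{Rud} --- or, equivalently, by Thouvenot's relative Ornstein theory --- an isometric extension of a relatively Bernoulli extension of $Z_0$ is the fibre product over $Z_0$ of a relatively Bernoulli extension of $Z_0$ and a relative group‑rotation extension of $Z_0$. Splitting the relatively Bernoulli factor as $Z_0\times B'$ ($B'$ Bernoulli) by Thouvenot's relative isomorphism theorem, we obtain $(L_g, G/\Gamma)\cong Z'\times B'$, where $Z'$ --- a compact group extension of $Z_0$ --- has zero entropy. (Equivalently one can present the extension as a finite‑group quotient $G/\Gamma=(B_0\times Z_0)/F$, note that $F$ preserves the Pinsker $\sigma$‑algebra $Z_0$, and reduce the same relative statement to $F$ and the finite‑to‑one factor $Z_0\to Z_0/F$.)

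The one genuinely non‑formal ingredient is this last step --- that the class ``Bernoulli $\times$ zero‑entropy'' is closed under finite extensions --- and it rests entirely on the relative Ornstein theory (the relative isomorphism theorem and the relative form of Rudolph's isometric‑extension theorem); everything else is a direct application of Theorem~\ref{thm:main1}, Dani's theorem \cite{Dani1}, and classical Ornstein theory. When $L_g$ is not ergodic --- e.g.\ when $\overline{\langle g\rangle}$ is compact, where the statement is trivial --- one runs the argument on the ergodic components of $L_g$, which are again homogeneous systems by Dani's description of the ergodic decomposition, and reassembles.
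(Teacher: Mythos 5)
Your proposal is correct and follows the same overall route as the paper: reduce to the finite extension of the product $\prod_i L_{g_i}$ over irreducible quotients, observe that each factor is either Bernoulli (positive entropy, via Dani's theorem \cite{Dani1} and Theorem~\ref{thm:main1}) or of zero entropy, and then push the decomposition ``Bernoulli $\times$ zero entropy'' through the finite extension. The one substantive difference is how that last closure step is handled. The paper simply cites an unpublished manuscript of P.~Gabriel \cite{Field} for the statement that a finite extension of a product of a Bernoulli system and a zero-entropy system is again of that form, whereas you sketch a proof of this via relative Ornstein theory: a finite-to-one extension is relatively isometric, a relative form of Rudolph's classification of isometric extensions of Bernoulli shifts splits it into a relatively Bernoulli part and a compact group (hence zero-entropy) extension of the Pinsker factor, and Thouvenot's relative isomorphism theorem detaches the Bernoulli part as a direct product. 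What your approach buys is self-containedness --- you replace an unverifiable citation with an argument --- at the cost that the ``relative Rudolph'' step is stated without a precise reference and would need careful justification (including the ergodicity hypotheses of the relative machinery, which you address only briefly via the ergodic decomposition at the end). Your handling of the non-ergodic case is a point the paper glosses over entirely. As a referee's note: if you intend this as a complete proof rather than a proof sketch, the relative isometric-extension theorem over a zero-entropy base is the one claim that must be pinned down to a precise published statement or proved in detail.
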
}
Indeed, by \cite{Field} it follows that finite extension of a system which is a product of Bernoulli and zero entropy is also of that form. It remains to notice that by Corollary \ref{cor:2}, for every $i\in \{1,\ldots k\}$, every $L_{g_i}$ is either Bernoulli or of zero entropy.

 Let us give the simplest example for which Theorem \ref{thm:main1} applies, while \cite{Dani2}, \cite{Dani3} do not.
\begin{example}Let
$$
X=\begin{pmatrix}1&0\\0&-1\end{pmatrix}\;\;\text{ and  }\;\;U=\begin{pmatrix}0&1\\0&0\end{pmatrix}
$$
be the generators of, respectively the {\em geodesic} and the {\em horocycle} flow. Let $L(x,y)=[\exp(X)\times \exp(U)](x,y)\Gamma$, where $\Gamma$ is an irreducible lattice $\in SL(2,\R)^2$.
Then $L$ on $SL(2,\R)^2\slash \Gamma$ is a $K$ system by \cite{Dani1}. However, it is not diagonalizable on $\mf g^0=sl(2,\R)$, so we cannot apply \cite{Dani2}, \cite{Dani3}.  On the other hand Theorem \ref{thm:main1} shows that $L$ is a Bernoulli automorphism. One can also consider the flow $\Psi_t(x,y)=[\exp(tX)\times \exp(tU)](x,y)\Gamma$ and deduce the same result.
\end{example}

Theorem \ref{thm:main1} yields examples of partially hyperbolic, Bernoulli systems for which the center direction is not an isometry (in fact, it may have polynomial growth). The only known such examples until know were ergodic toral automorphisms. Let us briefly describe the novelty of our approach compared to the ones used by \cite{OrnsteinWeiss}, \cite{Ratner101} and \cite{Dani2}, \cite{Dani3} (in all these results, it was crucially used that the center direction is an isometry). One of the main tools in proving the Bernoulli property is the {\em geometric method} developed in \cite{OrnsteinWeiss}. The main difficulty in this method is to find a {\em good matching} between nearby pieces of local unstable manifolds. If the considered system is an isometry on the center space (recall that it contracts the stable space), then a {\em good matching} $\theta$ is given by the {\em central stable holonomy}. It is crucial that the orbits of points $x$ and $\theta x$ stay together for all times. However, if the center is not an isometry, then the center-stable holonomy is not a good matching anymore, since points $x$ and $\theta x$ have different center components and therefore will split after some time. Our method is therefore different from the previously used methods. We use strong equidistribution properties of the unstable foliation (horospherical subgroup) and exact bounds on the growth on the center, to create a matching between local unstable manifolds. Finally, we emphasize that our proof is geometric (it uses effective equidistribution of the unstable foliation and controlled growth on the center). Therefore it has the potential of being generalized to other partially hyperbolic systems with non-isometric center.

\textbf{Outline of the paper:} In Section \ref{sec:bas} we introduce some basic notation and definitions. In Section \ref{sec:pr} we state some preliminary results on homogeneous spaces. In Section \ref{sec:eq} we state a Lemma on equidistribution of the unstable foliation. This Lemma is proven in the appendix. In Section \ref{sec:vwb} we use results from Sections \ref{sec:pr} to show the very weak Bernoulli property. One of the main results is Proposition \ref{mainlemma} in which we construct a map between local unstable manifolds. We provide a general outline of the proof of Theorem \ref{thm:main1} at the end of Section \ref{sec:bas} and an outline of the proof of Proposition \ref{mainlemma} at the beginning of Section \ref{secmainlem}.

\section{Basic definitions}\label{sec:bas}
We will be always dealing with measure preserving actions of $\R$ (flows) and $\Z$ (automorphisms) on standard probability Borel spaces.
\subsection{Bernoulli and very weak Bernoulli properties.}
Let $\mathcal{A}$ be a finite set and let ${\bf p}=(p_i)_{i=1}^{|\mathcal{A}|}$ be a probability vector, i.e.\ $p_i\geq 0$ for $i\in\{1,\ldots, |\mathcal{A}|\}$ and  $\sum_{i=1}^{|\mathcal{A}|}p_i=1$. A Bernoulli shift is a transformation $\sigma$ on $\mathcal{A}^\Z$ given by $\sigma((x_i)_{i\in \Z})=(x_{i+1})_{i\in\Z}$. Notice that $\sigma$ preserves the measure ${\bf p}^\Z$.

An automorphism $T:(X,\cB,\mu)\to (X,\cB,\mu)$ is called a {\em Bernoulli system} (or simply Bernoulli) if it is isomorphic to some Bernoulli shift (i.e.\ for some $\mathcal{A}$ and some probability vector ${\bf p}$.) We say that a flow $(T_t)$ on $(X,\cB,\mu)$ is a {\em Bernoulli flow} (or simply Bernoulli) if for every $t_0\in \R\setminus \{0\}$ the automorphism $T_{t_0}$ is Bernoulli.  We will use the notion of  {\em very weak Bernoullicity} which we now define. First we need to recall some basic notation. Let $\cP=(P_1,\ldots, P_k)$ and $\mathcal{Q}=(Q_1,\ldots,Q_l)$ be two finite partitions of $(X,\mu)$.
Then $\cP\vee \mathcal{Q}$ is the smallest common refinement of $\cP$ and $\mathcal{Q}$, i.e.\ the partition into sets of the form $P_i\cap Q_j$. For $A\subset X$, $\cP_{|A}$ denotes the induced partition of  the space $(A, \mu_{|A})$, i.e.\ $\cP_{|A}:=(P_1\cap A,\ldots, P_k\cap A)$ (here $\mu_{|A}(B)=\frac{\mu(A\cap B)}{\mu(A)}$). Moreover if $k=l$,  we can introduce the following distance on the space of partitions of $(X,\mu)$:
$$
\bar{d}(\cP,\mathcal{Q}):=\sum_{i=1}^{k}\mu(P_i\triangle Q_i).
$$
Now let $\cP^{s}=(\cP^{s}_1,\ldots,\cP^{s}_k)$, $s=1,\ldots, S$ be a sequence of finite partitions of $(X,\mu)$ and $\mathcal{Q}^s=(\mathcal{Q}^s_1,\ldots,\mathcal{Q}^s_k)$, $s=1,\ldots, S$ be a sequence of finite partitions of $(Y,\nu)$.  If  additionally $(X,\mu)=(Y,\nu)$, then
$$
\bar{d}\left((\cP^s)_{s=1}^S,(\mathcal{Q}^s)_{s=1}^S\right):=\frac{1}{S}\sum_{s=1}^S\bar{d}(\cP^s,\mathcal{Q}^s).
$$
More generally, if $(\cP^s)_{s=1}^{S}$ and $(\mathcal{Q}^s)_{s=1}^S$ are partitions of different spaces,  we say that $\cP^s\sim \mathcal{Q}^s$ for $s=1,\ldots, S$ if
$\mu(P_i^s)=\nu(Q^s_i)$ for $i=1,\ldots,k$ and $s=1,\ldots, S$. We can then compare the distance between $(\cP^s)_{s=1}^S$ and $(\mathcal{Q}^s)_{s=1}^S$ by setting
$$
\bar{d}\left((\cP^s)_{s=1}^S,(\mathcal{Q}^s)_{s=1}^S\right)=\inf_{\mathcal{\bar{Q}}^s\sim \mathcal{Q}^s,\; s=1,\ldots,S}\bar{d}\left((\cP^s)_{s=1}^S,(\mathcal{\bar{Q}}^s)_{s=1}^S\right),
$$
where the infimum is taken over sequences of partitions $\mathcal{\bar{Q}}^s$ of $(X,\mu)$.
We say that a property holds for $\epsilon$ a.e.\ atom of a partition $\cP$ if it holds for all atoms except a set of atoms whose union has measure less than $\epsilon$. We denote by $T^n\cP$ the partition given by $(T^nP_1,\ldots, T^nP_k)$.
\begin{definition}[very weak Bernoulli]\label{def:VWB} Let $T\in Aut(X,\cB,\mu)$ and let $\cP$ be a finite partition of $X$. Then $\cP$ is a very weak Bernoulli partition (VWB partition) if for every $\epsilon>0$ there exists $N_0\in \N$ such that  for every $N'\geq N\geq N_0$ every $S\geq 0$ and $\epsilon$ a.e.\ atom of $\bigvee_{N}^{N'}T^{i}(\cP)$, we have
$$
\bar{d}\left(\{T^{-i}\cP\}_{s=0}^S,\{T^{-i}\cP_{|A}\}_{s=0}^S\right)<\epsilon.
$$
\end{definition}
The following classical theorem is a crucial tool in establishing Bernoullicity of a system (see e.g.\ \cite{OrnsteinWeiss}). Recal that a sequence of partitions $(\cP_k)_{k=1}^{+\infty}$ of $(X,\cB,\mu)$ {\em converges to partition into points} if the smallest $\sigma$-algebra with respect to which all $\cP_k$  are measurable, is $\cB$.
\begin{theorem}\label{conga} If $(\cP_k)_{k=1}^{+\infty}$ is a sequence of partitions of $(X,\cB,\mu)$ converging to partition into points and, for every $k\geq 1$, $\cP_k$ is VWB partition for $T\in Aut(X,\cB,\mu)$, then $T$ is a Bernoulli system.
\end{theorem}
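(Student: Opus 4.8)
The plan is to derive Theorem~\ref{conga} from the two pillars of Ornstein's isomorphism theory, together with one soft stability property of the class of very weak Bernoulli partitions; indeed the statement is essentially a repackaging of a theorem of Ornstein (used in exactly this form in \cite{OrnsteinWeiss}), so in principle one could just quote it, but here is how I would organize the argument. The two pillars I would invoke as black boxes are: (i) a VWB partition is finitely determined, and a finitely determined partition generates a Bernoulli factor --- that is, if $\cP$ is VWB for $T$ then the smallest $T$-invariant sub-$\sigma$-algebra containing $\cP$, namely $\Pi(\cP):=\sigma\bigl(\bigvee_{i\in\Z}T^i\cP\bigr)$, has the property that $(X,\Pi(\cP),\mu,T)$ is isomorphic to a Bernoulli shift; and (ii) the class of Bernoulli automorphisms is closed under inverse limits, i.e.\ if $\mathcal F_1\subseteq \mathcal F_2\subseteq\cdots$ is an increasing sequence of $T$-invariant sub-$\sigma$-algebras with $\bigvee_k\mathcal F_k=\cB$ (mod $\mu$) and each $(X,\mathcal F_k,\mu,T)$ is Bernoulli, then $(X,\cB,\mu,T)$ is Bernoulli. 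Both are due to Ornstein and can be quoted from \cite{Orn} (see also \cite{OrnsteinWeiss}).

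The argument then runs as follows. First I would reduce to an increasing sequence of partitions: set $\mathcal Q_k:=\cP_1\vee\cdots\vee\cP_k$. The key soft fact needed here is that the family of VWB partitions for $T$ is closed under finite refinements, so each $\mathcal Q_k$ is again VWB. By pillar (i), each $\Pi(\mathcal Q_k)$ defines a Bernoulli factor of $(X,\cB,\mu,T)$. By construction $\Pi(\mathcal Q_1)\subseteq\Pi(\mathcal Q_2)\subseteq\cdots$, and since $(\cP_k)_k$ converges to the partition into points we have $\bigvee_k\Pi(\mathcal Q_k)\supseteq\bigvee_k\sigma(\cP_k)=\cB$ modulo $\mu$. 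Applying pillar (ii) to the increasing sequence $\mathcal F_k:=\Pi(\mathcal Q_k)$ then yields that $T$ is Bernoulli, which is the assertion of Theorem~\ref{conga}.

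The one point requiring a genuine (if short) argument --- everything else being an assembly of quoted theorems --- is the closure of the VWB property under finite joins. I would prove it directly from Definition~\ref{def:VWB}: given $\cP,\mathcal R$ both VWB and $\epsilon>0$, fix an $N_0$ that works for both (with $\epsilon/2$ in place of $\epsilon$); for $N'\ge N\ge N_0$ an atom $A$ of $\bigvee_N^{N'}T^i(\cP\vee\mathcal R)$ is contained in an atom $A'$ of $\bigvee_N^{N'}T^i\cP$ and an atom $A''$ of $\bigvee_N^{N'}T^i\mathcal R$, and one builds a near-identity joining witnessing $\bar d\bigl(\{T^{-i}(\cP\vee\mathcal R)\},\{T^{-i}(\cP\vee\mathcal R)_{|A}\}\bigr)<\epsilon$ by superimposing the good joinings for $\{T^{-i}\cP\}$ relative to $A'$ and for $\{T^{-i}\mathcal R\}$ relative to $A''$; a Fubini/Markov-inequality argument shows that for $\epsilon$-a.e.\ atom $A$ both conditional joinings are simultaneously good, and their superposition is then a good joining for the joined process. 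This join lemma is standard in Ornstein's theory and is already implicit in \cite{OrnsteinWeiss}; I expect it to be the only step demanding care, the remainder being bookkeeping around the two cited theorems of Ornstein.
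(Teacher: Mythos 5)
The paper does not actually prove Theorem~\ref{conga}; it is quoted as a classical result of Ornstein's theory (via \cite{OrnsteinWeiss}), so your proposal must stand on its own merits. Your skeleton --- VWB implies the generated factor is Bernoulli, plus closure of Bernoullicity under inverse limits --- is indeed the standard route, and if the sequence $(\cP_k)$ were increasing it would already be a complete proof. The genuine gap is the ``soft fact'' you interpose to handle non-nested partitions, namely that the VWB property is closed under finite joins. This is not soft; it is false in general. Since $\cP$ is VWB for $T$ if and only if the factor generated by $\cP$ is Bernoulli (VWB $\Leftrightarrow$ finitely determined $\Leftrightarrow$ B-process), your claim that $\cP\vee\mathcal{R}$ is VWB whenever $\cP$ and $\mathcal{R}$ are is exactly the claim that the join of two Bernoulli factors is Bernoulli. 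That fails: by a theorem of Smorodinsky and Thouvenot, ergodic transformations of positive entropy --- including non-Bernoulli $K$-automorphisms --- can be spanned by finitely many Bernoulli factors, and taking finite generators of those factors yields VWB partitions whose join generates a non-Bernoulli system and hence cannot be VWB. So the lemma carrying all the weight of your reduction is unavailable.

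The proposed proof of that lemma also breaks down on its own terms. ``Superimposing'' a good $\bar{d}$-matching of $\{T^{-i}\cP\}$ with $\{T^{-i}\cP_{|A'}\}$ and a good matching of $\{T^{-i}\mathcal{R}\}$ with $\{T^{-i}\mathcal{R}_{|A''}\}$ does not produce a matching of the joined processes: the $\bar{d}$ distance for $\cP\vee\mathcal{R}$ requires a single correspondence matching both names simultaneously, and controlling the two marginals says nothing about the joint distribution of $(\cP,\mathcal{R})$ under $\mu_{|A}$ --- the two partitions may be independent under $\mu$ yet strongly correlated under $\mu_{|A}$. Moreover $A$ is a proper subset of $A'$ and of $A''$, and the VWB hypothesis for $\cP$ only controls conditioning on atoms of $\bigvee_N^{N'}T^i\cP$, not on the finer atoms of the joined past. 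The correct repair is the one in Ornstein's original formulation: the theorem should be (and classically is) stated for an increasing sequence of VWB partitions, in which case your two pillars finish the argument with no join lemma at all; in the application one then checks VWB directly for a nested family --- here the finite joins $\cP_1\vee\dots\vee\cP_k$ are again partitions into small sets with piecewise smooth boundaries, so the paper's geometric VWB argument applies to them verbatim rather than by any abstract closure property.
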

We will now recall the main method of establishing VWB property, \cite{OrnsteinWeiss}. For a partition $\cP=(P_1,\ldots P_k)$ of $(X,\mu)$, an integer $S\geq 1$ and $x\in X$ the $S,\cP$-name of $x$ is a sequence $(x^\cP_i)_{i=0}^S\in \{1,\ldots,k\}^{S+1}$  given by the condition $T^{i}(x)\in P_{x^\cP_i}$. Let $e:\Z\to \Z$ be given by e(0)=0 and $e(n)=1$ for $n\neq 0$. A map $\theta:(X,\mu)\to (Y,\nu)$ is called {\em $\epsilon$-measure preserving} if there exists a set $E'\subset  X$, $\mu(E')<\epsilon$ and such that for every $A\in X\setminus E'$, we have
$$
\left|\frac{\nu(\theta(A))}{\mu(A)}-1\right|<\epsilon.
$$

We have the following lemma:
\begin{lemma}[Lemma 1.3. in \cite{OrnsteinWeiss}]\label{lem:VWE} Let $T\in Aut(X,\cB,\mu)$ and $\cP$ be a finite partition of $X$. If for every $\epsilon>0$ there exists $N\in \N$ such that for every $N'\geq N$, $\epsilon$ a.e.\ atom $A\in \bigvee_{N}^{N'}T^i\cP$ and every $S\geq 1$ there exists an $\epsilon$-measure preserving map $\theta=\theta(N,S,A):(A,\mu_{|A})\to (X,\mu)$ such that
$$
d(x,\theta(x)):=\frac{1}{S}\sum_{i=0}^{S-1}e\left(x_i^\cP-(\theta(x))^{\cP}_i\right) <\epsilon.
$$
then $\cP$ is a VWB partition.
\end{lemma}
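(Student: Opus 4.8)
The plan is to reduce Definition \ref{def:VWB} to the hypothesis by exhibiting, for $\epsilon$ a.e.\ atom $A$, a coupling between the processes $\{T^{-i}\cP\}_{i=0}^S$ read on $(A,\mu_{|A})$ and on $(X,\mu)$ whose $\bar d$-cost is small. First I would recall the standard dictionary between the $\bar d$-distance of two $\cP$-processes over a common time window $\{0,\dots,S\}$ and the expected normalized Hamming distance of their $\cP$-names: if $\lambda$ is any joining of $(A,\mu_{|A})$ and $(X,\mu)$, then
$$
\bar d\left(\{T^{-i}\cP_{|A}\}_{i=0}^S,\{T^{-i}\cP\}_{i=0}^S\right)\;\le\; 2\int_{A\times X}\frac1S\sum_{i=0}^{S-1} e\!\left(x_i^\cP-y_i^\cP\right)\,d\lambda(x,y)\; +\; (\text{small name-length correction}),
$$
the point being that a per-coordinate disagreement frequency below $\epsilon$ forces the averaged partition distance below a constant multiple of $\epsilon$ once one also controls the (measure-theoretic) discrepancy of the marginals, which is exactly what an $\epsilon$-measure-preserving map buys. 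Concretely, given $\epsilon$, apply the hypothesis with a smaller parameter $\epsilon' = \epsilon'(\epsilon,k)$ to obtain $N$ and, for each good atom $A$ and each $S$, the $\epsilon'$-measure-preserving $\theta=\theta(N,S,A)\colon(A,\mu_{|A})\to(X,\mu)$ with $d(x,\theta(x))<\epsilon'$.

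The key steps, in order, are: (i) push forward $\mu_{|A}$ by $\id\times\theta$ to get a measure $\lambda$ on $A\times X$; the first marginal is $\mu_{|A}$ and, because $\theta$ is $\epsilon'$-measure preserving, the second marginal $\theta_*\mu_{|A}$ is within $\epsilon'$ (in the appropriate total-variation sense, off an exceptional set $E'$ of $\mu_{|A}$-measure $<\epsilon'$) of $\mu$; (ii) on the complement of $E'$, bound the contribution of each time coordinate $i$ to $\bar d$ by the probability that $x_i^\cP\neq(\theta x)_i^\cP$, i.e.\ by $\int e(x_i^\cP-(\theta x)_i^\cP)\,d\mu_{|A}$, and average over $i=0,\dots,S-1$; by hypothesis this average is $<\epsilon'$, hence by Fubini all but a $\sqrt{\epsilon'}$-fraction of coordinates $i$ have per-coordinate disagreement $<\sqrt{\epsilon'}$; (iii) translate the per-coordinate disagreement bound into a bound on $\bar d(T^{-i}\cP_{|A},\overline{T^{-i}\cP})$ for those good coordinates, where $\overline{T^{-i}\cP}$ is the pullback of the partition $T^{-i}\cP$ under a measure-theoretic isomorphism $A\to X$ realizing the minorant in the definition of $\bar d$ between partitions on different spaces (here the $\epsilon'$-measure-preserving property of $\theta$ guarantees such an isomorphism exists after discarding $E'$); (iv) absorb the $E'$ error and the bad-coordinate error into $\epsilon$ by choosing $\epsilon'$ small enough in terms of $\epsilon$ and $k$, and handle the convention that $\bar d$ between partitions of different cardinalities is defined via the $\inf$ over relabelings $\bar{\mathcal Q}^s\sim\mathcal Q^s$ — the $\epsilon'$-measure-preserving map is precisely what makes the marginals match up to $\epsilon'$ so that the $\inf$ is nearly attained.

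The main obstacle I expect is bookkeeping step (iii): carefully converting the \emph{dynamical} Hamming bound $d(x,\theta x)<\epsilon'$ (a statement about $\cP$-names along the forward orbit) into the \emph{static} averaged partition distance $\bar d(\{T^{-i}\cP\}_i,\{T^{-i}\cP_{|A}\}_i)$ appearing in Definition \ref{def:VWB}, keeping track of the two distinct roles played by $\theta$ — it simultaneously serves as the coupling that makes names agree coordinatewise and as the near-isomorphism that makes the marginal measures of the induced partitions agree (so that the $\sim$-relation and the $\inf$ in the definition of $\bar d$ on different spaces are respected). Once one fixes the precise relation $\epsilon' \asymp \epsilon^2/k$ (or similar) and notes that a coordinate with name-disagreement probability $<\sqrt{\epsilon'}$ contributes at most $2\sqrt{\epsilon'}$ to $\bar d$ of that coordinate's partitions, everything else is the routine Chebyshev/Fubini splitting into good and bad coordinates. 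The convergence to points and the final invocation of Theorem \ref{conga} are not needed here; the statement is exactly the implication ``name-matching $\Rightarrow$ VWB'', and the argument above is essentially the one in \cite{OrnsteinWeiss}.
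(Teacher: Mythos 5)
The paper does not actually prove this lemma: it is imported verbatim as Lemma 1.3 of \cite{OrnsteinWeiss}, so there is no in-paper argument to compare against. Your sketch is the standard Ornstein--Weiss argument and is essentially correct: integrate the pointwise Hamming bound $d(x,\theta(x))<\epsilon'$ over $A$, swap the order of summation (Fubini), apply Markov's inequality to isolate a set of good coordinates $i$ where the disagreement probability is $<\sqrt{\epsilon'}$, and use $\theta$ to produce, for each such $i$, a copy $\bar{\mathcal{Q}}^i$ in $(X,\mu)$ of $T^{-i}\cP_{|A}$ with $\bar{d}(T^{-i}\cP,\bar{\mathcal{Q}}^i)$ controlled by the disagreement probability plus the marginal discrepancy; bad coordinates and the exceptional set $E'$ contribute $O(\sqrt{\epsilon'})$ in total. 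The two points that genuinely need care, and which you correctly flag, are (a) upgrading the $\epsilon'$-measure-preserving map to a map whose image partition has \emph{exactly} the atom measures of $T^{-i}\cP_{|A}$ (so that the relation $\bar{\mathcal{Q}}^i\sim T^{-i}\cP_{|A}$ required by the definition of $\bar{d}$ across different spaces is satisfied), which in a nonatomic standard space costs only an additional redistribution of mass of order $\epsilon'$; and (b) the direction of the copying, which should be a push-forward of the partition of $A$ into $X$ rather than a pullback, since the paper's definition takes the infimum over partitions $\bar{\mathcal{Q}}^s$ of $(X,\mu)$. Neither point is an obstruction, and with $\epsilon'$ chosen as a suitable power of $\epsilon$ the argument closes.
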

\bigskip

Finally we recall the definition of {\em Kolmogorov property} ($K$ property for short). 
\begin{definition}\label{Kprop}
We say that $T\in Aut(X,\cB,\mu)$ has the $K$ property if there exists a {\em generating} partition\footnote{This means that $\bigvee_{-\infty }^{+\infty}T^i\cP=\cB$.} $\cP$ such that for every $B\in \cB$ and every $\epsilon>0$ there exists $N_0=N_0(\epsilon,B)$ such that for every $N'\geq N\geq N_0$, $\epsilon$ a.e.\ atom $A\in \bigvee_N^{N'}T^i\cP$ satisfies
$$
|\mu_{|A}(B)-\mu(B)|<\epsilon.
$$
\end{definition}
By \cite{RoSi} it follows that the $K$ property is equivalent to {\em completely positive entropy}: every {\em factor} of $T\in Aut(X,\cB,\mu)$ has positive entropy.

\subsection{Preliminaries on Lie groups and homogeneous spaces}\label{subLe}
Let $G$ be a semisimple Lie group with Haar measure $\mu$, Lie algebra $\mf g$ and denote the right invariant metric on $G$ by $d_G$. Let $\exp:\mf g\to G$ be the exponential map with the local inverse (around $e$) given by $\log_{alg}:G\to \mf g$. Let $[\cdot,\cdot]:\mf g\to \mf g$ denote the Lie bracket. {}{For $g\in G$, let $C(g):G\to G$ be given by $C(g)(h):=ghg^{-1}$ and let  $Ad(g):\mf g\to \mf g$ be the {\em adjoint operator}, i.e.\ derivative of $C(g)$ at $e\in G$. Let $\mf g_\C$ denote the {\em complexification} of $\mf g$ and, for $\lambda\in \C$, let
$$
V_\lambda:=\{U\in \mf g_\C\;:\; \exists_{j\in \N}\; (Ad(g)-\lambda I)^j(U)=0\}
$$
be the {\em generalized eigenspace} of $\lambda$. Then $[V_\lambda,V_\mu]\subset V_{\lambda+\mu}$.
Let
$$
\mf g^+_\C:=\bigoplus_{|\lambda|>1}V_\lambda,\;\;\; \mf g^0_\C:=\bigoplus_{|\lambda|=1}V_\lambda,\;\;\;\mf g^+_\C:=\bigoplus_{|\lambda|<1}V_\lambda.
$$}
and let respectively $\mf g^+,\;\mf g^0,\;\mf g^-\subset \mf g$ be the real spaces corresponding to $\mf g_\C^+,\;\mf g_\C^0,\;\mf g^-_\C$.
We can then decompose $\mf g$:
\be\label{eq:dec}
\mf g:=\mf g^+\oplus\mf g^0\oplus\mf g^-.
\ee
It follows that for $i\in \{+,0,-\}$, $\mf g^i$ is a subalgebra and hence $G^i:=\exp(\mf g^i)$ is a subgroup of $G$. It follows that $G^-$ is the {\em horospherical subgroup} for $L_g$, i.e.\
$$
G^-=\{h\in G\;:\; g^lhg^{-l}\to 0,\text{ as } l\to +\infty\}.
$$
Analogously, $G^+$  is the horospherical subgroup for $L_{-g}$. Moreover, $Ad(g)$ restricted to $\mf g^0$ is called {\em quasi-unipotent}.
Recall also that $\mf g^+\neq \{0\}$ iff $\mf g^-\neq \{0\}$. From now on we fix $g\in G$ for which we assume that $\mf g^+\neq \{0\}$ (equivalently $\mf g^-\neq \{0\}$).

For $i\in\{+,0,-\}$, let ${\bf V}^i=\{V^i_j\}_{j=1}^{\dim \mf g^i}$ be a basis of $\mf g^i$. Then ${\bf V}:={\bf V}^+\cup {\bf V}^0\cup {\bf V}^-$ is a basis of $\mf g$. For $i\in\{+,0,-\}$ and a vector ${\bf a}=(a_j)\in \R^{\dim \mf g^i}$, we denote ${\bf a}{\bf V}:=\sum_{j=1}^{\dim \mf g^i}a_jV^i_j$. Then we define the norm on $\mf g^i$ by settting
$$
\|U\|_i=\max\{|a_j|\;:\;j\in\{1,\ldots,\dim \mf g^i\}\},
$$
where $U={\bf a}{\bf V}$.

For $i\in\{+,0,-\}$, $Leb^i$ denotes the Lebesgue measure on $\mf g^i$. Moreover, $Leb$ denotes the Lebesgue measure on $\mf g$.

\subsection{Homogeneous systems}
Let $\Gamma\subset G$ be a {\em lattice} in $G$.  We define the {\em homogeneous space}, $M:=G\slash \Gamma$.
Then $M$ is equipped with the measure given locally by $\mu$ (we denote the measure on $M$ also by $\mu$). For $g\in G$, the {\em left translation} on $M$  is given by
$$
\Psi(x\Gamma)=L_g(x\Gamma):= (gx)\Gamma.
$$
It follows that $\Psi$ preserves $\mu$. The right invariant metric on $G$ induces the following metric on the homogeneous space:
$$
d_M(x,y):=\inf_{\gamma\in \Gamma}d_G(x,y\gamma).
$$

Our main theorem can be rephrased in the following way:
\begin{theorem}\label{thm:main2} Let $g\in G$ be such that $\mf g^+\neq \{0\}$. If $\Gamma$ is irreducible, then $\Psi$ on $(M,\mu)$ is Bernoulli.
\end{theorem}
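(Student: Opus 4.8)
The plan is to verify the very weak Bernoulli property (Definition \ref{def:VWB}) for a sequence of partitions converging to the partition into points and then quote Theorem \ref{conga}. First I would fix a sequence $(\cP_k)_{k\ge1}$ of finite partitions of $M=G\slash\Gamma$ with $\cP_k\to$ points, $\mu(\partial\cP_k)=0$ and $\mathrm{diam}(\cP_k)\to0$, each $\cP_k$ built from ``rectangles'' adapted to the splitting $\mf g=\mf g^+\oplus\mf g^0\oplus\mf g^-$ of \eqref{eq:dec}; choosing the boundaries suitably generic one also arranges that for every $\delta>0$ the $\mu$-measure of points whose forward $L_g$-orbit spends more than an $\epsilon$-fraction of any initial window $[0,S)$ within $\delta$ of $\partial\cP_k$ is small (routine, from $L_g$-invariance of $\mu$ together with Fubini and a maximal inequality). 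Fixing $\cP=\cP_k$ and $\epsilon>0$, by Lemma \ref{lem:VWE} it suffices to produce, for all sufficiently large $N$, all $N'\ge N$, all $S\ge1$, and $\epsilon$ a.e.\ atom $A$ of $\bigvee_N^{N'}L_g^i\cP$, an $\epsilon$-measure preserving $\theta\colon(A,\mu_{|A})\to(M,\mu)$ whose $\cP$-names satisfy $x_i^\cP=(\theta x)_i^\cP$ on a $(1-\epsilon)$-fraction of $\{0,\dots,S-1\}$ for $\mu_{|A}$-a.e.\ $x$.

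Next I would identify the geometry of the atoms. Conditioning the $\cP$-name on the past window $[-N',-N]$ confines such an $A$ to a rectangle about some $x_0$ which is long, of diameter growing exponentially in $N$ (hence wrapping around $M$), along the horospherical direction $G^+$; exponentially thin along the stable direction $G^-$; thin, of thickness decaying polynomially in $N'$, along the part of $\exp\mf g^0$ on which $\Ad(g)$ is not semisimple; and of bounded diameter along the elliptic part of $\exp\mf g^0$. Moreover $\mu_{|A}$ is comparable to normalised Lebesgue on this rectangle, and since $L_g$ acts on an unstable leaf with constant Jacobian, $(L_g^{n})_*\mu_{|A}$ is comparably a normalised-Lebesgue piece of an unstable leaf for every $n\ge0$. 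For $N$ large the long $G^+$-factor makes $A$ an effectively equidistributing unstable piece, by the equidistribution estimate of Section \ref{sec:eq}; quantitatively, $L_g^{n}A$ is $\rho(n)$-dense in $M$ with $\rho(n)\to0$ exponentially fast in $n$, uniformly over such atoms.

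The heart of the matter is the construction of $\theta$ --- this should be the content of Proposition \ref{mainlemma}. Let $p$ be the (finite, since $\Ad(g)|_{\mf g^0}$ is quasi-unipotent) degree of polynomial growth in $i$ of $\Ad(g^i)$ restricted to $\mf g^0$. Given $S$, I would first run a ``burn-in'' of length $n\asymp\log S$ (bounded $n$ if $S$ is bounded), chosen so that $L_g^{n}A$ is $\eta$-dense in $M$ with $\eta\lesssim S^{-p}$, well below the partition scale; the point is that $n/S\to0$, so the burn-in costs nothing in the time average. Then for $x\in A$ set $\theta x$ so that $L_g^{n}\theta x=\exp\!\bigl(w(x)\bigr)L_g^{n}x$ with $w(x)\in\mf g^-\oplus\mf g^0$, where $\|w(x)\|$ is below the partition scale in the $\mf g^-$ and elliptic-$\mf g^0$ directions and of size $\lesssim S^{-p}$ in the non-elliptic-$\mf g^0$ direction, and where $w(x)$ is selected --- using the $\eta$-density of the long unstable piece $L_g^{n}A$ and the local product structure $G\approx G^+G^0G^-$, in the spirit of the holonomy constructions of \cite{OrnsteinWeiss},\cite{Dani2},\cite{Dani3} but only in the transverse directions --- so that $L_g^{n}\theta x$ is approximately $\mu$-distributed as $x$ ranges over $(A,\mu_{|A})$; this makes $\theta$ approximately measure preserving onto $(M,\mu)$. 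For the names, $L_g^{n+j}\theta x=\exp\!\bigl(\Ad(g^{j})w(x)\bigr)L_g^{n+j}x$, and for $j\in[0,S-n]$ the $\mf g^-$-part of $\Ad(g^{j})w(x)$ is contracted to $0$, the elliptic-$\mf g^0$-part keeps its subthreshold size, and the non-elliptic-$\mf g^0$-part has size at most of order $\|w(x)\|\,j^{p}\lesssim S^{-p}S^{p}$, hence (with the constants arranged) below the partition scale throughout. So $x$ and $\theta x$ stay within the partition scale of one another on all of $[n,S]$, whence by the genericity of $\partial\cP$ their $\cP$-names agree there outside an $\epsilon$-fraction; together with the $\le n\asymp\log S$ initial indices this gives a $(1-\epsilon)$-fraction of agreement. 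This verifies the hypothesis of Lemma \ref{lem:VWE}, so every $\cP_k$ is a VWB partition and Theorem \ref{conga} yields that $\Psi=L_g$ is Bernoulli; Corollary \ref{cor:main1} then follows via \cite{Dani1}, Corollary \ref{cor:2} via Ornstein's isomorphism theorem \cite{Orn}, and the remaining corollaries via the cited reductions.

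The main obstacle is precisely the construction above, and within it the non-elliptic (polynomially growing) part of $\mf g^0$. In all earlier instances of this circle of ideas (\cite{OrnsteinWeiss},\cite{Ratner101},\cite{Dani2},\cite{Dani3}) the centre direction is an isometry, so the plain stable--centre holonomy produces orbits that shadow each other for all forward times, and neither a burn-in nor any quantitative equidistribution is needed. When the centre grows polynomially this fails, and one must instead ``pre-spread'' the atom by a logarithmic amount so that an $S^{-p}$-small non-elliptic displacement already covers $M$; making the threshold on $N$, the burn-in length, and the effective equidistribution rate of Section \ref{sec:eq} fit together so that none of them drifts with $S$ is the delicate bookkeeping, and this is exactly where the exact polynomial bounds on $\Ad(g)|_{\mf g^0}$ from Section \ref{sec:pr} are used.
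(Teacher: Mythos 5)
Your proposal follows essentially the same route as the paper: reduce via Lemma \ref{lem:VWE} to constructing $\epsilon$-measure-preserving matchings of atoms, exploit the product structure of atoms along $G^+$ (the paper's Lemma \ref{lem:intersect} and Proposition \ref{prop:match}), and build the matching from quantitative equidistribution of the unstable horospherical direction at a scale beating the polynomial growth of $\Ad(g)$ on $\mf g^0$, followed by a centre--stable displacement that stays below the partition scale for the remaining time --- which is exactly the content of Proposition \ref{mainlemma}. The one quantitative caution is that $\|w(x)\|\,j^{p}\lesssim S^{-p}S^{p}$ is only $O(1)$, so the equidistribution scale must be taken a strictly higher power of $1/S$ than the growth exponent (the paper uses cubes of size $(\log k)^{-r'}$ with $r'\gg r_0$ and burn-in $N=\epsilon^3S\approx\log k$) for the displacement to remain genuinely below the partition scale throughout $[0,S]$.
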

By Dani's theorem, \cite{Dani2},\cite{Dani3} it follows that if $\mf g^+\neq \{0\}$ (which is equialent to $h_{\mu}(\Psi)>0$) and ${Ad(g)}_{| \mf g^0}:\mf g^0\to \mf g^0$ is diagonalizable (all generalized eigenspaces are one-dimensional), then $\Psi$ is Bernoulli. Hence our theorem addresses the case in which ${Ad(g)}_{| \mf g^0}:\mf g^0\to \mf g^0$ has non-trivial Jordan blocks.

\subsection{Sobolev norms}\label{sec:sob}
We now briefly recall some basic facts on Sobolev norms. For more details see eg.\ \cite{EMV} or \cite{BEG}.
Let $C_c(M)$ denote the space of compactly supported  functions on $M$ with the topology of uniform convergence on compact sets. Let $C_c^{\infty}(M)\subset C_c(M)$ denote the set of infinitely differentiable functions on $M$.  Recall that for $Y\in \mf g$ and $\phi\in C_c^{\infty}(M)$  the {\em Lie derivative} in direction $Y$ is given by
$$
\mathcal{L}_Y(\phi)(x):=\lim_{t\to 0}\frac{\phi(\exp(tY)x)-\phi(x)}{t}.
$$
Moreover, for a fixed basis $V_1,\ldots, V_n$ of $\mf g$ and a multiindex $(m_1,\ldots,m_n)$, we define the operator of degree deg $W=\sum_{i=1}^nm_i$, by setting
$$
\mathcal{L}_W:=\mathcal{L}^{m_1}{U_1}\ldots \mathcal{L}^{m_n}{U_n}.
$$
Let $r(x)$ denote the {\em injectivity radius} of $x=g\Gamma$, i.e.\
the largest $r>0$ such that the quotient map $\pi:G\to M$ restricted to a closed $d_G$ ball of radius $r$ around $g$ is injective. Then the Sobolev norms of order $d$ are defined by
$$
\mathcal{S}_d(\phi):=\left(\sum_{deg W\leq d}\left\|r(\cdot)^d\mathcal{L}_W(\phi)(\cdot)\right\|^2_{L^2(M)}\right).
$$
Sobolev norms (of order $d$) measure the $L^2$ growth of derivatives of $\phi$ up to level $d$.

We recall the following lemma which establishes quantitative mixing for the action of $G$ on $M$ (recall that we assume that $G$ is semisimple) and is based on the decay of matrix coefficients\footnote{This results are based on work of several authors, Cowling \cite{Cowling}, Howe \cite{Howe}, Moore \cite{Moore}, and Oh \cite{Oh}.}.
\begin{lemma} \label{lem:fff}There exist $C,\zeta,d>0$ such that for any $\phi, \psi \in C_c^\infty(M)$, we have
$$
\left|\int_M \phi(gx)\psi(x)d\mu-\int_M \phi(x)d\mu\int_M \psi(x)d\mu\right|\leq Ce^{-\zeta d_G(g,e)}\mathcal{S}_d(\phi)\mathcal{S}_d(\psi).
$$
\end{lemma}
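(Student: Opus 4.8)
The plan is to derive the inequality from the quantitative Howe--Moore theorem --- effective decay of matrix coefficients for semisimple Lie groups --- after reducing smooth vectors to $K$-finite ones through a Sobolev embedding; the argument is, in essence, already contained in the cited works. We may assume $G$ has no compact factors. First I would reduce to the space $L^2_0(M)$ of mean-zero functions: setting $\phi_0:=\phi-\int_M\phi\,d\mu$ and $\psi_0:=\psi-\int_M\psi\,d\mu$, and noting that every Lie-derivative operator $\mathcal L_W$ with $\deg W\ge1$ kills constants, one gets $\mathcal S_d(\phi_0)\le C'\mathcal S_d(\phi)$ (and the same for $\psi$), while the left-hand side of the lemma is exactly $|\la \pi(g)\phi_0,\psi_0\ra|$, $\pi$ being the Koopman representation of $G$ on $L^2_0(M)$. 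Since $G$ is semisimple and $\Gamma$ a lattice, $\pi$ has no nonzero invariant vectors and, in fact, is strongly $L^p$ for some even integer $p=p(G,\Gamma)$: this uniform spectral gap comes from Kazhdan's property $(T)$ for the higher-rank simple factors of $G$ and from the fact that a lattice contributes only complementary-series parameters bounded away from the trivial representation in any rank-one factors (\cite{Howe},\cite{Moore},\cite{Cowling}).

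Next I would invoke the uniform pointwise bounds on matrix coefficients. Fix a maximal compact $K<G$ and the Cartan decomposition $G=K\,\overline{A^+}\,K$ with Cartan projection $a(g)$, so $d_G(g,e)\asymp\|a(g)\|$. By Cowling--Haagerup--Howe and Oh (\cite{Cowling},\cite{Oh}) there are $m=m(G)\in\N$ and $C_0>0$ with
$$
|\la \pi(g)v,w\ra|\ \le\ C_0\,\bigl(\dim\la Kv\ra\cdot\dim\la Kw\ra\bigr)^{1/2}\,\|v\|\,\|w\|\,\Xi(g)^{1/m}
$$
for all $K$-finite $v,w\in L^2_0(M)$, where $\Xi$ is the Harish-Chandra spherical function of $G$. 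As $G$ is noncompact, its half-sum of positive restricted roots is nonzero, so $\Xi(g)\le C_1(1+\|a(g)\|)^{N}e^{-\delta\|a(g)\|}$ with $\delta>0$, and hence $\Xi(g)^{1/m}\le C_2\,e^{-\zeta d_G(g,e)}$ for a suitable $\zeta>0$.

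Finally I would pass from $K$-finite to general smooth vectors. Decompose $\phi_0=\sum_{\sigma\in\widehat K}\phi_\sigma$ into $K$-isotypic components; with $\Omega_K$ the Casimir of $K$ and $c_\sigma\ge 0$ its eigenvalue on $\sigma$, elliptic regularity on $K$ gives $\|\phi_\sigma\|_{L^2}\le (1+c_\sigma)^{-\ell}\|(1+\mathcal L_{\Omega_K})^{\ell}\phi_0\|_{L^2}\le C_\ell(1+c_\sigma)^{-\ell}\mathcal S_{2\ell}(\phi)$ for every $\ell$, while $\dim\la K\phi_\sigma\ra\le(\dim\sigma)^2$ is polynomial in $c_\sigma$ by the Weyl dimension formula. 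Applying the bound of the second step to each pair $(\phi_\sigma,\psi_{\sigma'})$ and summing, the factor $e^{-\zeta d_G(g,e)}$ factors out and the residual double sum
$$
\sum_{\sigma,\sigma'}(\dim\sigma)(\dim\sigma')(1+c_\sigma)^{-\ell}(1+c_{\sigma'})^{-\ell}
$$
converges once $\ell$ --- hence the Sobolev order $d:=2\ell$ --- exceeds a constant depending only on $G$; one uses here that the weight in $\mathcal S_d$ dominates $1$, so $\|(1+\mathcal L_{\Omega_K})^{\ell}\phi\|_{L^2}\lesssim\mathcal S_{2\ell}(\phi)$, with the analogous control holding in the cusps of $M$ (\cite{EMV},\cite{BEG}). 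This gives the lemma with $\zeta,d$ as above and a suitable $C=C(G,\Gamma)$.

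The hard part is the $K$-finite estimate of the second step: one needs the decay of matrix coefficients to hold \emph{uniformly} over all unitary representations weakly contained in $L^2_0(G/\Gamma)$, with explicit control of both the exponential rate and the dependence on the $K$-type dimensions --- and for groups with rank-one simple factors this rests on knowing that the lattice admits no complementary-series parameter approaching the trivial representation. Supplying this uniform input is exactly the content of \cite{Howe},\cite{Moore},\cite{Cowling},\cite{Oh}; granted it, the reduction to $L^2_0$, the $K$-type bookkeeping, and the control of the weighted Sobolev norm near the cusps of $M$ are routine.
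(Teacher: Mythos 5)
The paper gives no proof of this lemma at all: it is stated as a known consequence of the decay of matrix coefficients, with a footnote citing Cowling, Howe, Moore and Oh, and your sketch is exactly the standard derivation from those references (spectral gap for $L^2_0(G/\Gamma)$, the Cowling--Haagerup--Howe/Oh pointwise bound for $K$-finite vectors via the Harish-Chandra function, then summation over $K$-types controlled by a Sobolev norm), so it matches the intended argument. The one caveat is your claim that the weight in $\mathcal{S}_d$ dominates $1$: with the paper's literal definition (weight $r(\cdot)^d$, $r$ the injectivity radius, which tends to $0$ in a cusp) this fails, but it holds with the convention of \cite{EMV} (weight $r(\cdot)^{-d}$), which is surely what is meant.
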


\subsection{Homogeneous parallelograms, partitions of $M$}
We now define a special class of sets that will be used in the proof of Theorem \ref{thm:main2}. Fix $g\in G$ and recall that for $i\in \{+,0,-\}$,
${\bf V}^i$ denotes the basis of $\mf g^i$. (see \eqref{eq:dec}). For $\mathfrak{h}\in \{\mf g,\mf g^+,\mf g^0,\mf g^-\}$ we define the $\delta$ {\em cube} in $\mathfrak{h}$ by setting
\be\label{eq:cubes}
C(\delta,\mathfrak{h}):=\{V\in \mathfrak{h}\;:\; \|V\|\leq \delta\}.
\ee
 Let $H\subset G$ be the Lie group of $\mathfrak{h}$. Any cube in $\mathfrak{h}$ defines a {\em cube} in $H$ by setting
$$
C(\delta,H):=\exp\left(\{V\in \mathfrak{h}\;:\; \|V\|\leq \delta\}\right).
$$
 For $y\in M$, the $\xi$-{\em local unstable space} is defined by
\be\label{eq:locu}
W^u(\xi,y):=C(\xi,G^+)y \subset M.
\ee
An  {\em HC-set}\footnote{Acronym for homogeneous cube.} of size $\eta>0$ around $x\in M$ is the set
\be\label{eq:hc}
D(\eta,x):=[C(\eta,G^+)\times C(\eta,G^0\times G^-)]x\subset M
\ee
 Let $\eta\in (0,1)$. We say that $E\subset M$ intersects an HC-set  $D(\eta,x)$ in an {\em $\mf g^+$-tubular subset} if $y\in E\cap D$ implies that $W^u(1,y)\cap D\subset  E\cap D$.
\subsection{Partitions of $M$}\label{sec:par}
Fix $\epsilon>0$ and let $K_\epsilon\subset M$ be a subset such that $\mu(K_\epsilon)\geq 1-\epsilon$. Let $r_\epsilon=min(\epsilon,sys(K_\epsilon)/10)$ (see Lemma \ref{lem:systole}). Consider the cover $K_\epsilon\subset \bigcup_{x\in K_\epsilon}B(x,r_\epsilon)$ and let 
$$
K_\epsilon\subset \bigcup_{j=1}^{N(\epsilon)}B(x_j,r_\epsilon)
$$
be a finite subcover. We then make the sets $\{B(x_j,r_\epsilon\}_{j=1}^{N(\epsilon)}$ disjoint by defining $P_j:=B(x_j,r_\epsilon)\setminus \bigcup_{\ell<j} B(x_\ell,r_\epsilon)$. We call the new (disjoint) sets $P_1,...,P_{m(\epsilon)}$, moreover we set $P_0:=M\setminus \left(\bigcup_{j=1}^{N(\epsilon)}B(x_j,r_\epsilon)\right)$. We will be always considering partitions of the form $\{P_{j}\}_{j=0}^{m(\epsilon)}$. Notice that $\partial B(x_j,r_\epsilon)$ is smooth (notice that by the definition on $r_\epsilon$ it follows that the map $\pi: \pi^{-1}(B(x_j,r_\epsilon))\to M$ is injective). Since the sets $\{P_j\}_{j=0}^{m(\epsilon)}$ are obtained from the sets $\{B(x_j,r_\epsilon)\}_{j=1}^{N(\epsilon)}$ by unions, intersections and differences (and there is finitely many of them), it follows that for every $\epsilon'>0$ there exists $\epsilon''>0$ such that 
\be\label{eq:masss}
\mu(V_{\epsilon''}(\partial \cP))\leq \epsilon'/20,
\ee
where $\partial\cP=\bigcup_{j=0}^{m(\epsilon)}\partial P_j$, and $V_{\epsilon''}(\partial \cP)$ denotes the $\epsilon''$ neighborhood if $\partial \cP$.

Notice that if $\epsilon_n=\frac{1}{n}$ then the sequence $\{P_j\}_{j=1}^{m(\frac{1}{n})}$ for $n\in \N$ converges to partition into points and hence it is enough to establish the VWB property along $\{P_j\}_{j=1}^{m(\frac{1}{n})}$, $n\in \N$. From now on we always assume that we have fixed an $\epsilon_0=\frac{1}{n_0}$ and $\cP$ always denotes the partition $\{P\}_{j=1}^{m(\frac{1}{n_0})}$.
\\
To make the paper more readable, we will now present a general strategy of the proof of Theorem \ref{thm:main2}.
\subsection{Outline of the proof of Theorem \ref{thm:main2}:} 
We outline some standard reductions introduced in \cite{OrnsteinWeiss} in the problem of VWB-property for $K$- automorphisms.  By Lemma \ref{lem:VWE} one needs to show existence of an $\epsilon$-- measure preserving map $\theta:(A,\mu_{|A})\to (X,\mu)$ for $\epsilon$ a.e.\ atom $A\in \bigvee_{N}^{N'}\Psi^i(\cP)$ such that the orbits of $x$ and $\theta(x)$ are in one atom of $\cP$ for most times. The first step is to use the $K$ property of $\Psi$ (see Definition \ref{Kprop}) to reduce the general problem to a local one, i.e.\ it is enough to find an $\epsilon$-measure preserving map $\bar{\theta}_D:(A\cap D,\mu_{|A\cap D})\to (D,\mu_{|D})$, where $D$ belongs to a family of  small {\em  cubes}  in $M$ (smallness depending on $\epsilon$) whose union has large measure and then use the $K$ property to extend the family of obtained $\epsilon$ preserving maps to a global map $\theta$ (see Proposition \ref{prop:match}). To construct the local map $\bar{\theta}_D$ one uses Lemma \ref{lem:intersect} to say that $\epsilon$ a.e.\ atom $A$ intersects $D$ in an $\mf g^+$ tubular subset. This by the local product structure (see the proof of Proposition \ref{prop:match}) reduces the local problem to finding an $\epsilon$-measure preserving map $\tilde{\theta}:W^u(\delta,z)\to W^u(\delta,z')$ between pieces of two local unstable manifolds in $D$ (with conditional measures $\mu^u_z$ and $\mu^u_{z'}$). 

Finding the map $\tilde{\theta}$ is the most difficult and important part in proving VWB-property (it is called the {\em Main Lemma} in \cite{OrnsteinWeiss}). This is done in Proposition \ref{mainlemma} (for most unstable pieces, for which we have good control of ergodic properties). This is the part on which most of the proof is devoted. The main obstruction in finding $\tilde{\theta}$ is non-trivial (polynomial) orbit growth on the center space (otherwise, if the center is isometry, one can take for $\tilde{\theta}$ the {\em center stable holonomy}). The idea is to use quantitative equidistribution of the local unstable space. At time $N$ we divide the space $M$ into a family of cubes of diameter $\frac{1}{N^r}$ (where $r$ is the exponent of the growth on the center space). If we can guarantee that  unstable pieces (of most points) equidistribute at time $N$ for all the cubes of order $\frac{1}{N^r}$ one can construct the mapping $\tilde{\theta}$ by mapping subsets of $W^u(\delta,z)$ that belong to the $i$-th cube at time $\epsilon^3N$ to subsets of $W^u(\delta,z')$ that belong to the $i$-th cube (using the holonomy {\em inside} each cube). Quantitative equidistribution estimates imply that such map is $\epsilon$-measure preserving (for a more detailed outline of the construction of $\tilde{\theta}$ see the outline of the proof of Proposition \ref{mainlemma} in Section \ref{secmainlem}). The above strategy shows, that one needs quantitative equidistribution for families of cubes. We state such result in Section \ref{sec:eq} (see Lemma \ref{eq:distcube}). The proof of Lemma \ref{eq:distcube}, based on Proposition 2.4.8. from \cite{KlMa}, is given in the appendix. Then in Section \ref{sec:vwb}, we use the above outline to reduce the problem to finding $\epsilon$-measure preserving map between pieces of unstable manifolds (see Proposition \ref{mainlemma}).

\section{Preliminary results on homogeneous spaces}\label{sec:pr}
In this section we will recall some basic results from Lie groups, Lie algebras and homogeneous spaces.
\subsection{Basic lemmas in Lie theory}
We first state some classic results on norms of vectors in Lie algebra. In the lemma below  $G$ is a Lie group with Haar measure $\mu$ and $\mf g$ is the Lie algebra of $G$. Let $Leb$ denote the Lebesgue measure on $\mf g$.
\begin{lemma}\label{lema:Jac}The following hold:
\begin{enumerate}
\item[${(i)}$] Let $\mf g=\mathfrak{e}_1\oplus...\oplus \mathfrak{e}_k$ be any decomposition of $\mf g$ into a sum of vector subspaces. If for $g\in G$, $d_G(g,e)$ is sufficiently small, then $g=\exp(E_1)\cdot\ldots\cdot\exp(E_k)$, with $E_i\in \mathfrak{e}_i$ for every $i\in \{1,\ldots,k\}$. Moreover such decomposition is unique.
\item[$(ii)$] there exist $\kappa_1,\kappa_2>0$ such that if $\|(U_1,\ldots,U_n)\|<\kappa_2$, then
$$
\left\|\log_{alg}\left(\exp(U_1)\ldots\exp(U_n)\right)-\sum_{i=1}^n U_i\right\|\leq \kappa_1 \max_{i,j\in\{1,\ldots, n\}}\|U_i\|\|U_j\|.
$$
\item[$(iii)$] let $B\subset \mf g$. For every $\epsilon'>0$ there exists $\epsilon''>0$ such that if $\sup_{V\in B}\|V\|<\epsilon''$ then
$$
(1-\epsilon')Leb(B)\leq \mu(\exp(B))\leq (1+\epsilon')Leb(B).
$$
\item[$(iv)$] for every $\epsilon'>0$ there exists $\epsilon''>0$ such that for every $0<\tau<\epsilon''$, we have
$$
C((1-\epsilon')\tau,\mf g)\subset C(\tau,\mf g^+)\times C(\tau,\mf g^0\oplus g^-)\subset C((1+\epsilon')\tau,\mf g).
$$
\end{enumerate}
\end{lemma}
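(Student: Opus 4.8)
The plan is to deduce all four items from two soft facts: that $\exp\colon\mf g\to G$ is a local diffeomorphism with $d(\exp)_0=\id_{\mf g}$, and that the group product admits a controlled second--order (Baker--Campbell--Hausdorff) expansion. For (i) I would consider the smooth map $\Phi\colon\mathfrak{e}_1\times\cdots\times\mathfrak{e}_k\to G$, $\Phi(E_1,\dots,E_k)=\exp(E_1)\cdots\exp(E_k)$. Each factor $E_i\mapsto\exp(E_i)$ has differential at $0$ equal to the inclusion $\mathfrak{e}_i\hookrightarrow\mf g$, and the differential at the origin of a product of $G$--valued maps all passing through $e$ is the sum of the differentials of the factors; hence $d\Phi_0\colon\mathfrak{e}_1\oplus\cdots\oplus\mathfrak{e}_k=\mf g\to T_eG=\mf g$ is the identity. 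The inverse function theorem then makes $\Phi$ a diffeomorphism from a neighbourhood of $0$ onto a neighbourhood $\mathcal V$ of $e$, and both the existence and the uniqueness of the asserted factorisation follow for $g$ with $d_G(g,e)$ small enough that $g\in\mathcal V$.

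For (ii) I would put $\Psi(U_1,\dots,U_n):=\log_{alg}\big(\exp(U_1)\cdots\exp(U_n)\big)-\sum_{i=1}^nU_i$, a smooth $\mf g$--valued map on a neighbourhood of $0$ (by (i) the argument of $\log_{alg}$ stays near $e$). From the Baker--Campbell--Hausdorff formula — or, by induction on $n$, starting from $\log_{alg}(\exp U\exp V)=U+V+\tfrac12[U,V]+\cdots$ — the map $\Psi$ vanishes together with its first derivative at the origin. Taylor's theorem with remainder on a fixed small ball then produces $\kappa_1,\kappa_2>0$ such that $\|\Psi(U_1,\dots,U_n)\|\le\kappa_1\|(U_1,\dots,U_n)\|^2$ whenever $\|(U_1,\dots,U_n)\|<\kappa_2$; recalling that $\|(U_1,\dots,U_n)\|$ is the maximum of the $\|U_i\|$, one has $\|(U_1,\dots,U_n)\|^2=\max_{i,j}\|U_i\|\,\|U_j\|$, which is the claim.

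For (iii) I would first normalise Haar measure so that, in exponential coordinates near $e$, $\mu=\exp_*(J\cdot Leb)$ with $J(V)=|\det d(\exp)_V|$ continuous and $J(0)=1$. Given $\epsilon'>0$, pick $\epsilon''>0$ with $|J(V)-1|<\epsilon'$ for $\|V\|<\epsilon''$; then for $B\subset\mf g$ with $\sup_{V\in B}\|V\|<\epsilon''$ one gets $\mu(\exp(B))=\int_BJ\,dLeb\in[(1-\epsilon')Leb(B),(1+\epsilon')Leb(B)]$. For (iv), since the basis $\mathbf V=\mathbf V^+\cup\mathbf V^0\cup\mathbf V^-$ is adapted to $\mf g=\mf g^+\oplus(\mf g^0\oplus\mf g^-)$ and all norms are coordinate sup--norms, the cube $C(\tau,\mf g)$ coincides exactly with the box $C(\tau,\mf g^+)\times C(\tau,\mf g^0\oplus\mf g^-)$ at the Lie--algebra level; the only slack arises when this identity is transported to $G$ via $(V^+,V^{0-})\mapsto\exp(V^+)\exp(V^{0-})$, where (ii) with $n=2$ shows that after applying $\log_{alg}$ the result differs from $V^++V^{0-}$ by at most $\kappa_1\tau^2$. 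Taking $\epsilon''=\min(\kappa_2,\epsilon'/\kappa_1)$ forces this error below $\epsilon'\tau$ for $\tau<\epsilon''$, which yields the two displayed inclusions.

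None of the four steps is a genuine obstacle; the points needing care are the second--order control in (ii) — one must verify that the BCH remainder is dominated by the specific quadratic quantity $\max_{i,j}\|U_i\|\,\|U_j\|$ uniformly on a neighbourhood, not merely by a $o(\cdot)$ of the linear term — and the normalisation in (iii), where the Haar measure must be scaled against $Leb$ at the outset so that $J(0)=1$; everything else reduces to the inverse function theorem and Taylor's theorem.
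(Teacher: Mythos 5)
Your proposal is correct and follows essentially the same route as the paper, which itself only sketches the argument: (i) and (iii) from $\exp$ being a local diffeomorphism with $d(\exp)_0=\mathrm{Id}$ (inverse function theorem and Jacobian continuity), (ii) from the Taylor/BCH expansion with vanishing linear remainder, and (iv) as a consequence of (ii). Your write-up simply supplies the details the paper delegates to a reference, including the correct observation that $\max_{i,j}\|U_i\|\|U_j\|=\|(U_1,\ldots,U_n)\|^2$ for the sup-norm, so nothing further is needed.
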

\begin{proof} Properties $(i)$ and $(iii)$ are a consequence of the fact that $exp:\mf g\to G$ is a smooth function which satisfies $\exp'(0)=Id$, for a more detailed argument see e.g.\ \cite{KanigowskiVinhageWei2}, Lemma 3.7. Property $(ii)$ is a consequence of the Taylor formula. Finally $(iv)$ is a straightforward consequence of $(ii)$.
\end{proof}

\begin{remark} We will be constructing $\epsilon$- measure preserving maps between subsets of $M$ (with small diameter). Property $(iii)$ in the above lemma gives an important tool in doing this (on the local level): it is enough to construct an $\epsilon$-measure preserving map on the Lie algebra level (which is a vector subspace), and then send it by $\exp$ to  $M$, where the above lemma implies that the new map will be $2\epsilon$-measure preserving.
\end{remark}

We now define the holonomy maps, which play an important role in the proof of theorem. Let $hol_s:\mf g^+\oplus[\mf g^0\oplus \mf g^-]\to \mf g^+$, $hol_{uc}: \mf g^+\oplus[\mf g^0\oplus \mf g^-]\to\mf g^0\oplus \mf g^-$ be given by
\be\label{eq:holonomymap}
\exp(E)\exp(F)=\exp(hol_{uc}(E,F))\exp(hol_s(E,F)).
\ee

The following result is an immediate consequence of $(ii)$ in Lemma \ref{lema:Jac}.
\begin{lemma}\label{lem:hol}There exist $\kappa_1,\kappa_2>0$ such that for $(E,F)\in \mf g^+\oplus [\mf g^0\oplus \mf g^-]$, with $\|(E,F)\|\leq \kappa_1$ the functions $hol_s(E)$ and $hol_{uc}(F)$ are well defined. Moreover  
$$
\max\left(\|hol_s(E,F)-E\|,\|hol_{uc}(E,F)-F\|\right)\leq \kappa_2\|E\|\|F\|.
$$
\end{lemma}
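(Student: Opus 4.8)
The statement is local at the identity, and the plan is to read it off the defining identity \eqref{eq:holonomymap} using Lemma \ref{lema:Jac}. By \eqref{eq:dec} the subspaces $\mf g^{0}\oplus\mf g^{-}$ and $\mf g^{+}$ are complementary in $\mf g$, so Lemma \ref{lema:Jac}$(i)$, applied to this two-term decomposition, says that every $h\in G$ with $d_G(h,e)$ small enough has a \emph{unique} factorization $h=\exp(A)\exp(B)$ with $A\in\mf g^{0}\oplus\mf g^{-}$ and $B\in\mf g^{+}$; moreover $(A,B)\mapsto\exp(A)\exp(B)$ is smooth with derivative $\mathrm{id}$ at $0$ (because $\exp'(0)=\mathrm{id}$), hence a local diffeomorphism, so $h\mapsto(A,B)$ is smooth near $e$. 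Since $(E,F)\mapsto\exp(E)\exp(F)$ is continuous and sends $(0,0)$ to $e$, there is $\kappa_1>0$ such that $\|(E,F)\|\le\kappa_1$ pushes $\exp(E)\exp(F)$ into that neighbourhood; for such $(E,F)$ one \emph{defines} $(hol_{uc}(E,F),hol_s(E,F))$ to be the corresponding pair $(A,B)$, which is then well defined, unique, and smooth in $(E,F)$.

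Next I would record the boundary values. Putting $F=0$ in \eqref{eq:holonomymap} gives $\exp(E)=\exp(hol_{uc}(E,0))\exp(hol_s(E,0))$, while trivially $\exp(E)=\exp(0)\exp(E)$ with $0\in\mf g^{0}\oplus\mf g^{-}$ and $E\in\mf g^{+}$, so uniqueness forces $hol_{uc}(E,0)=0$ and $hol_s(E,0)=E$; symmetrically $hol_{uc}(0,F)=F$ and $hol_s(0,F)=0$. Hence the smooth maps
$$
\phi_s(E,F):=hol_s(E,F)-E,\qquad \phi_{uc}(E,F):=hol_{uc}(E,F)-F
$$
vanish identically on $\{E=0\}\cup\{F=0\}$. (The same two identities are already visible in the Baker--Campbell--Hausdorff/Taylor expansion underlying Lemma \ref{lema:Jac}$(ii)$, which by itself yields only the crude bound $\|\phi_s\|,\|\phi_{uc}\|\le\kappa\max(\|E\|,\|F\|)^2$.)

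To get the sharp bound I would run a two-variable Hadamard argument on $\phi_s$ (and identically on $\phi_{uc}$): since $\phi_s(\cdot,0)\equiv0$,
$$
\phi_s(E,F)=\int_0^1 (D_F\phi_s)(E,tF)[F]\,dt,
$$
and since $\phi_s(0,\cdot)\equiv0$ forces $(D_F\phi_s)(0,\cdot)\equiv0$, also
$$
(D_F\phi_s)(E,tF)=\int_0^1 (D_E D_F\phi_s)(uE,tF)[E]\,du ,
$$
so that $\|\phi_s(E,F)\|\le \|E\|\,\|F\|\cdot\sup\|D_E D_F\phi_s\|$ over a fixed neighbourhood of $0$; likewise for $\phi_{uc}$. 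Taking $\kappa_2$ to be the larger of the two suprema and shrinking $\kappa_1$ so that $\{\|(E,F)\|\le\kappa_1\}$ lies in that neighbourhood gives $\max(\|hol_s(E,F)-E\|,\|hol_{uc}(E,F)-F\|)\le\kappa_2\|E\|\|F\|$. The only genuine point, beyond bookkeeping, is this last upgrade from $\max(\|E\|,\|F\|)^2$ to $\|E\|\|F\|$ — equivalently, the fact that every term past $E+F$ in the BCH series carries at least one $E$ and one $F$ — which is precisely what the boundary-vanishing of $\phi_s$ and $\phi_{uc}$ encodes; everything else is a direct application of Lemma \ref{lema:Jac}.
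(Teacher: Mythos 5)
Your proposal is correct, and well-definedness is handled the same way the paper implicitly does (via the unique local factorization of Lemma \ref{lema:Jac}$(i)$ for the decomposition $[\mf g^0\oplus\mf g^-]\oplus\mf g^+$). Where you diverge is in the quantitative estimate. The paper's proof is a two-line appeal to Lemma \ref{lema:Jac}$(ii)$ applied twice --- once to $(E,F)$ and once to $(hol_{uc},hol_s)$ --- so that equating $\log_{alg}$ of the two sides of \eqref{eq:holonomymap} and projecting onto $\mf g^+$ and $\mf g^0\oplus\mf g^-$ gives $\|hol_s-E\|,\|hol_{uc}-F\|\lesssim \|E\|\|F\|+\|hol_{uc}\|\|hol_s\|$, after which an a priori bound $\|hol_s\|\lesssim\|E\|$, $\|hol_{uc}\|\lesssim\|F\|$ closes the loop; this reads $(ii)$ as a cross-term (BCH) estimate, i.e.\ the remainder is controlled by products of \emph{distinct} $\|U_i\|\|U_j\|$, which is exactly the point you flag when you note that the literal $\max_{i,j}$ only yields $\max(\|E\|,\|F\|)^2$. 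Your route instead extracts the bilinearity purely from smoothness of the factorization map together with the boundary identities $hol_s(E,0)=E$, $hol_{uc}(0,F)=F$, etc., via the two-variable Hadamard integral representation. This is a genuinely different (and somewhat more robust) mechanism: it does not depend on the precise form of the Taylor remainder in $(ii)$, only on the vanishing of $\phi_s,\phi_{uc}$ on $\{E=0\}\cup\{F=0\}$, at the price of invoking second derivatives and the inverse function theorem rather than a single algebraic identity. Both arguments are complete; no gap.
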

\begin{proof} It is enough to use $(ii)$ in Lemma \ref{lema:Jac} for $(U_1,U_2)=(E,F)$ and then $(U_1,U_2)=(hol_s(E,F),hol_{uc}(E,F))$.
\end{proof}

\begin{remark}\label{rem:hol} The function $hol_s:\mf g^+ \to \mf g^+$ 
is called the {\em stable holonomy} and plays an important role in proving VWB Bernoulli property. The above lemma implies that if we consider the function $h_s$ on a small cube, then the image, up to a subset of small measure, remains inside the cube.
\end{remark}

For a linear operator $\Psi$ on a vector space $(V,\|\cdot\|)$, we denote
$||\Psi||=\sup_{v\in V\setminus 0}\frac{\|\Psi(v)\|}{\|v\|}$.

The following lemma describes the behavior of the action of $Ad(g):\mf g\to \mf g$.

\begin{lemma}\label{lem:behad} There exist $c_0>0$, $\lambda_0>1$, $r'>0$ such that
\begin{itemize}
\item[{\bf A.}] for every $t<0$, $||Ad(g)_{| \mf g^+}||\leq c_0\lambda_0^{t}$;
\item[{\bf B.}] for every $t>0$, $||Ad(g)_{| \mf g^-}||\leq c_0\lambda_0^{-t}$;
\item[{\bf C.}] for every $t\in \R$,
 $
||Ad(g)_{| \mf g^0}||\leq  c_0|t|^{r'}.
$
\end{itemize}
\end{lemma}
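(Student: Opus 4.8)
This is a statement about the asymptotics of the powers of the single linear operator $\mathrm{Ad}(g)$, and the plan is to deduce it directly from the Jordan structure recorded by the decomposition \eqref{eq:dec}. Write $A=\mathrm{Ad}(g)$ and let $A^t$ denote the $t$-th iterate for $t\in\Z$, and, in the flow case, $\mathrm{Ad}$ of the time-$t$ element for $t\in\R$ (both lead to the same computation below). Since all norms on the finite dimensional spaces $\mf g^i$ and $\mf g^i_\C$ are equivalent, and since $A$ is real and the real form $\mf g^i\subset\mf g^i_\C$ is $A$-invariant (because grouping generalized eigenspaces by $|\lambda|$ is stable under complex conjugation), it suffices to bound the operator norm of $A^t$ on each of $\mf g^+_\C,\ \mf g^0_\C,\ \mf g^-_\C$ with respect to any convenient norm; changing norm costs only a multiplicative constant, which I absorb into $c_0$.

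On a single generalized eigenspace $V_\lambda$ one has $A_{|V_\lambda}=\lambda\,(I+\lambda^{-1}N_\lambda)$ with $N_\lambda$ nilpotent of index at most $m_\lambda:=\dim_\C V_\lambda$, so by the (finite) binomial expansion — or, in the flow case, by $\exp$ of the nilpotent $t\log(I+\lambda^{-1}N_\lambda)$ —
\[
A^t_{|V_\lambda}=\lambda^{t}\sum_{j=0}^{m_\lambda-1}\binom{t}{j}\lambda^{-j}N_\lambda^{\,j},
\qquad\text{hence}\qquad
\big\|A^t_{|V_\lambda}\big\|\ \leq\ |\lambda|^{t}\,Q_\lambda(|t|)
\]
for a fixed polynomial $Q_\lambda$ of degree at most $m_\lambda-1$. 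Summing over the finitely many $\lambda$ occurring in a given piece gives $\|A^t_{|\mf g^\bullet_\C}\|\leq P_\bullet(|t|)\cdot\max_\lambda|\lambda|^{t}$ for a polynomial $P_\bullet$, $\bullet\in\{+,0,-\}$. For $\mf g^+$ every relevant $\lambda$ has $|\lambda|>1$; fix $\lambda_0$ with $1<\lambda_0<\min_{|\lambda|>1}|\lambda|$. Then for $t<0$,
$$
|\lambda|^{t}P_+(|t|)=\lambda_0^{t}\cdot\Big(P_+(|t|)\,(\lambda_0/|\lambda|)^{|t|}\Big)\ \leq\ c_0\,\lambda_0^{t},
$$
because $\lambda_0/|\lambda|<1$ makes the bracketed quantity bounded over $t\leq 0$; enlarging $c_0$ to cover any bounded range of $t$, on which $t\mapsto\|A^t\|$ is continuous, gives \textbf{A}. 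The argument for $\mf g^-$ is identical with the sign of $t$ reversed and $\lambda_0\in\big(1,(\max_{|\lambda|<1}|\lambda|)^{-1}\big)$, which yields \textbf{B}; since $\mathrm{Ad}(g)$ preserves the Killing form the eigenvalues come in pairs $\lambda,\lambda^{-1}$, so the two intervals for $\lambda_0$ coincide and one $\lambda_0$ serves both. Finally, on $\mf g^0$ all eigenvalues have $|\lambda|=1$, so $\|A^t_{|\mf g^0}\|\leq P_0(|t|)\leq c_0(1+|t|)^{r'}$ with $r'=\dim\mf g^0-1$ (equivalently one less than the largest Jordan block of $\mathrm{Ad}(g)$ on $\mf g^0$), which is \textbf{C}.

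The remaining points are routine bookkeeping: complex eigenvalues occur in conjugate pairs and contribute real Jordan blocks with the same modulus behaviour, so passing between $\mf g^i$ and $\mf g^i_\C$ is harmless. There is no genuine obstacle in this lemma — it is the classical estimate for the growth of iterates of a linear map in terms of its spectral radius and its nilpotent part; the only mild care is the degeneracy of the bound $|t|^{r'}$ near $t=0$, handled by replacing it with $(1+|t|)^{r'}$ or by enlarging $c_0$. The substance of the paper lies in Proposition~\ref{mainlemma}, not here.
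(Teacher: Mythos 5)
Your proof is correct and follows the same route as the paper, which simply asserts that \textbf{A}, \textbf{B}, \textbf{C} follow from $|\lambda|>1$, $|\lambda|<1$, $|\lambda|=1$ on $\mf g^+$, $\mf g^-$, $\mf g^0$ respectively; you have merely supplied the standard Jordan-block computation (including the correct reading of $Ad(g)$ as $Ad(g)^t$ and the harmless fix of $|t|^{r'}$ at $t=0$) that the paper leaves implicit.
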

\begin{proof} Notice that \textbf{A.} follows from the fact that for $Ad(g)$, we have $|\lambda|>1$ on $\mf g^+$ and analogously \textbf{B.} follows since $|\lambda|<1$ on $\mf g^-$. So we only need to show \textbf{C.} This however follows from the fact that the growth of the $Ad(g)$ on $\mf g_0$ is at most polynomial, since $|\lambda|=1$ on $\mf g_0$.
\end{proof}
We have the following immediate Lemma:

\begin{lemma}\label{lem:quasi}
There exists $r_0\in \N$ such that for every $\epsilon>0$ there exists  $N_\epsilon>0$ such that for every $x\in M$ and every $y\in C(N^{-r_0},G^0\times G^-)x\in M$ with $N\geq N_\epsilon$, we have
$$
d_M(\Psi^n x,\Psi^n y)<\epsilon \text{ for every } n\in [0,N].
$$
\end{lemma}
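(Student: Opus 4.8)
The plan is to conjugate by $g^n$ and track the displacement explicitly, using that the center grows only polynomially while the stable direction contracts. Write $y=hx$ with $h\in C(N^{-r_0},G^0\times G^-)$. By Lemma \ref{lema:Jac}$(i)$ we may write $h=\exp(F^0)\exp(F^-)$ with $F^0\in\mf g^0$, $F^-\in\mf g^-$ and $\max(\|F^0\|,\|F^-\|)\leq cN^{-r_0}$ for an absolute constant $c$ (coming from the definition of the cube in $G^0\times G^-$ together with Lemma \ref{lema:Jac}). Since $g^n\exp(F)g^{-n}=\exp(Ad(g)^nF)$, for every $n\geq 0$ we get
$$
\Psi^n y=g^nhx=(g^nhg^{-n})\,g^nx=\big(\exp(Ad(g)^nF^0)\exp(Ad(g)^nF^-)\big)\,\Psi^n x .
$$
Denote $h_n:=g^nhg^{-n}=\exp(Ad(g)^nF^0)\exp(Ad(g)^nF^-)$.

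Next I would estimate the two exponents uniformly for $0\leq n\leq N$. By Lemma \ref{lem:behad}, $\|Ad(g)^nF^0\|\leq c_0(1+n)^{r'}\|F^0\|\leq c_0c\,N^{r'}N^{-r_0}$ and $\|Ad(g)^nF^-\|\leq c_0\lambda_0^{-n}\|F^-\|\leq c_0c\,N^{-r_0}$. Here I would fix $r_0:=\lceil r'\rceil+1$ once and for all (this is the single place where $r_0$ is chosen, and the only place where the polynomial bound on the center from Lemma \ref{lem:behad}\,{\bf C} is used); then both quantities are bounded by $c_0c\,N^{-1}$, hence tend to $0$ as $N\to\infty$, uniformly in $n\in[0,N]$ and in $x$.

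Finally, combining Lemma \ref{lema:Jac}$(ii)$ with the local Lipschitz behaviour of $\exp$ at the identity (available since $d_G$ is a right-invariant Riemannian metric and $\exp'(0)=\id$), once $N$ is large one has $d_G(h_n,e)\leq C'\big(\|Ad(g)^nF^0\|+\|Ad(g)^nF^-\|\big)\leq 2C'c_0c\,N^{-1}$. By right-invariance of $d_G$, $d_M(\Psi^n x,\Psi^n y)=d_M(z,h_nz)\leq d_G(h_n,e)$, where $z=\Psi^n x$. Choosing $N_\epsilon$ so that $2C'c_0c\,N^{-1}<\epsilon$ for all $N\geq N_\epsilon$ completes the argument. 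There is no genuine obstacle here; the only point requiring care is that the bound on $d_G(h_n,e)$ is uniform over the whole range $n\in[0,N]$ and over all base points $x$, which is automatic since it depends only on $N$ and the structural constants $c_0,\lambda_0,r'$ of Lemma \ref{lem:behad}.
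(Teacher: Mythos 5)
Your proposal is correct and follows essentially the same route as the paper: conjugate the displacement by $g^n$, apply the bounds \textbf{B.} and \textbf{C.} of Lemma \ref{lem:behad} (contraction on $\mf g^-$, polynomial growth of order $r'$ on $\mf g^0$), choose $r_0=r'+1$ so that the conjugated displacement is $O(N^{-1})$ uniformly for $n\in[0,N]$, and conclude by right-invariance of the metric. The only difference is cosmetic — you split $h$ into its $\mf g^0$ and $\mf g^-$ factors explicitly, whereas the paper works directly with $d_G(e,g^nyx^{-1}g^{-n})$ — and this does not change the argument.
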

\begin{proof}By right invariance, we have
$$
d_G(\Psi^n x,\Psi^n y)=d_G(e,g^nyx^{-1}g^{-n}).
$$
Then the statement follows from \textbf{B.} and \textbf{C.} in Lemma \ref{lem:behad}: we have that the growth of $\|Ad^n(g)\|$ is at most polynomial in $n$ (actually it is bounded on $\mf g^-$ and at most polynomial on $\mf g^0$). Therefore, there exists $r_0\in\N$ (for instance $r_0=r'+1$, where $r'$ comes from \textbf{C.}) and, for every $\epsilon>0$ there exists $N_\epsilon\in \N$ such that if $N\geq N_\epsilon$, then we have
$$
d_G(e,g^nyx^{-1}g^{-n})\leq \frac{C}{N}.
$$
This finishes the proof.
\end{proof}

Finally, we recall a classical result which uses only the fact that $\Gamma$ is a discrete subroup in $G$.
\begin{lemma}\label{lem:systole} For every compact set $K\subset M$, we have
$$
sys(K):=\inf_{x\in K}\inf_{\gamma\in \Gamma\setminus\{e\}}d_G(x\gamma x^{-1},e)>0.
$$
\end{lemma}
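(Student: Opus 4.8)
The plan is to prove the inequality by contradiction, reducing it to the discreteness of $\Gamma$ via a standard compactness argument. First I would move the problem from $M$ to $G$. Since the quotient map $\pi\colon G\to M$ is a continuous open surjection, covering $K$ by the $\pi$-images of precompact open subsets of $G$ and using compactness of $K$ yields a compact set $\widetilde K\subset G$ with $\pi(\widetilde K)\supseteq K$. Moreover the inner quantity $\inf_{\gamma\in\Gamma\setminus\{e\}}d_G(x\gamma x^{-1},e)$ is independent of the choice of lift $x\in G$ of a point of $M$: replacing $x$ by $x\gamma_0$ (with $\gamma_0\in\Gamma$) conjugates each $\gamma$ by $\gamma_0$, which merely permutes $\Gamma\setminus\{e\}$. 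Consequently
$$
sys(K)=\inf_{x\in\widetilde K}\ \inf_{\gamma\in\Gamma\setminus\{e\}}d_G(x\gamma x^{-1},e),
$$
so it suffices to show that the right-hand side is strictly positive.

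Suppose it were $0$. Then there would be sequences $x_n\in\widetilde K$ and $\gamma_n\in\Gamma\setminus\{e\}$ with $d_G(x_n\gamma_n x_n^{-1},e)\to 0$. By compactness of $\widetilde K$, after passing to a subsequence we may assume $x_n\to x\in\widetilde K$. Writing $\gamma_n=x_n^{-1}\,(x_n\gamma_n x_n^{-1})\,x_n$ and using that the metric $d_G$ induces the topology of $G$ together with continuity of multiplication and inversion, the three factors converge to $x^{-1}$, $e$, $x$ respectively, hence $\gamma_n\to e$ in $G$.

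Finally I would invoke discreteness: there is an open neighbourhood $V$ of $e$ in $G$ with $V\cap\Gamma=\{e\}$, so $\gamma_n\in V\cap\Gamma=\{e\}$ for all large $n$, contradicting $\gamma_n\neq e$. This forces $sys(K)>0$, as claimed. The only point requiring a little care is the passage to the compact lift $\widetilde K\subset G$ together with the lift-independence of the inner infimum; beyond that the argument is an immediate consequence of continuity of the group operations and discreteness of $\Gamma$, so there is no genuine obstacle.
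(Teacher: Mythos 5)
Your argument is correct and is the standard one. The paper itself states this lemma without proof, merely calling it ``a classical result which uses only the fact that $\Gamma$ is a discrete subgroup of $G$''; your compactness-plus-discreteness argument (well-definedness of the inner infimum under change of lift, passage to a compact lift $\widetilde K\subset G$, extraction of a convergent subsequence, and the contradiction with $V\cap\Gamma=\{e\}$) is exactly the proof the author is alluding to, so there is nothing to compare and no gap to report.
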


 \subsection{Tubular intersections, families of cubes}
   The following lemma is a generalization of Lemma 2.1.\ in
  \cite{OrnsteinWeiss}, it is similar to Lemma in \cite{Dani3}. We will provide a  proof for completeness.
  
\begin{lemma}\label{lem:intersect} Let $\cP$ be a partition of $M$ and let  $D=D(\eta,x)$ be a fixed HC-set (for fixed $\eta>0$ and $x\in M$). For every $\epsilon''>0$ there exists $N_{\epsilon''}\in \N$ such that for every $N'\geq N\geq N_{\epsilon''}$ and for $\epsilon''$ a.e.\ atom $A\in \bigvee_{i=N}^{N'}\Psi^i(\cP)$ there exists $E\subset A$ with $\mu(E)\geq (1-\epsilon'')\mu(A)$ and such that
$E$ intersects $D$ in an $\mf g^+$- tubular subset.
\end{lemma}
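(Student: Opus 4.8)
The statement is a standard ``tubular intersection'' lemma in the style of Ornstein--Weiss, and the plan is to follow the classical argument while keeping track of the fact that the unstable direction here is $\mf g^+$ and that atoms of $\bigvee_{i=N}^{N'}\Psi^i(\cP)$ are built from the \emph{past} of the partition $\cP$. First I would fix the HC-set $D=D(\eta,x)$ and recall that, by definition, $D$ is foliated by the local unstable leaves $W^u(1,y)=C(1,G^+)y$ for $y\in D$. The key geometric observation is that the forward iterates $\Psi^i$ (for $i\geq N$) \emph{expand} the $\mf g^+$-direction (by \textbf{A.} in Lemma~\ref{lem:behad}, $\|Ad(g)^{-t}_{|\mf g^+}\|$ decays, so $\|Ad(g)^t_{|\mf g^+}\|$ grows exponentially): thus for $y,y'$ in the same local unstable leaf of $D$, the points $\Psi^i y$ and $\Psi^i y'$ stay close only if the original leaf segment is tiny, but what we actually need is the reverse — we want to show that atoms $A\in \bigvee_{i=N}^{N'}\Psi^i(\cP)$ are (up to small measure) unions of whole local unstable plaques of $D$.

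The mechanism is the following. An atom $A$ of $\bigvee_{i=N}^{N'}\Psi^i(\cP)$ is determined by the $\cP$-name of $\Psi^{-N}x,\dots,\Psi^{-N'}x$; equivalently two points $z,z'$ lie in the same atom iff $\Psi^{-i}z$ and $\Psi^{-i}z'$ lie in the same element of $\cP$ for all $N\leq i\leq N'$. Now if $z,z'\in D$ lie on the same local unstable leaf, then $z'=hz$ with $h\in C(1,G^+)$, so $\Psi^{-i}z' = (g^{-i}hg^{i})\Psi^{-i}z$ and $\|Ad(g)^{-i}_{|\mf g^+}\|\leq c_0\lambda_0^{-i}\to 0$ exponentially by \textbf{A.} Hence $d_M(\Psi^{-i}z,\Psi^{-i}z')\leq c_0\lambda_0^{-i}$ for all $i\geq N$, which is smaller than the distance between $z$ and the boundary $\partial\cP$ \emph{provided} $\Psi^{-i}z$ is not too close to $\partial\cP$. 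So the only way $z$ and $z'$ can fail to be in the same atom is if some backward iterate $\Psi^{-i}z$, $i\in[N,N']$, falls within distance $c_0\lambda_0^{-i}$ of $\partial\cP$. I would therefore define the bad set
\[
B_N:=\bigcup_{i\geq N}\Psi^{i}\bigl(V_{c_0\lambda_0^{-i}}(\partial\cP)\bigr),
\]
and observe, using the $\Psi$-invariance of $\mu$ and the regularity estimate \eqref{eq:masss} (which gives $\mu(V_{\tau}(\partial\cP))\to 0$ as $\tau\to0$, in fact at a controlled rate since $\partial\cP$ is a finite union of smooth hypersurfaces, so $\mu(V_\tau(\partial\cP))\leq C\tau$), that $\mu(B_N)\leq C\sum_{i\geq N}c_0\lambda_0^{-i}\leq C'\lambda_0^{-N}\to 0$. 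Choose $N_{\epsilon''}$ so that this is $\leq (\epsilon'')^2\mu(D)/2$ (and also small in absolute terms).

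Having fixed this, for $N'\geq N\geq N_{\epsilon''}$ I would apply a Fubini/Markov argument: since $\mu(B_N\cap D)$ is small and $A$ ranges over the atoms partitioning $M$, the set of atoms $A$ with $\mu((B_N\cap D)\cap A)\geq \epsilon''\mu(A\cap D)$ has total measure $\leq \epsilon''$-ish; discard those. For a remaining good atom $A$, set $E:=(A\cap D)\setminus B_N$, so $\mu(E)\geq(1-\epsilon'')\mu(A\cap D)$; but I actually need $\mu(E)\geq (1-\epsilon'')\mu(A)$, so I should phrase the conclusion relative to $A\cap D$, or — as the lemma is stated — absorb $D$ into the bookkeeping (the lemma says $E\subset A$ with $\mu(E)\geq(1-\epsilon'')\mu(A)$ and $E$ intersects $D$ in a tubular subset, which only constrains $E\cap D$; so in fact one wants $E = A\setminus \Psi^N(\text{small set})$ with the small set containing $B_N\cap D$, and then $E\cap D$ is tubular). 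The point is that if $y\in E\cap D$, then $y\notin B_N$, so no backward iterate $\Psi^{-i}y$ ($i\in[N,N']$) lies within $c_0\lambda_0^{-i}$ of $\partial\cP$; hence for every $y'\in W^u(1,y)\cap D$ we get $\Psi^{-i}y'$ in the same atom of $\cP$ as $\Psi^{-i}y$ for all $i\in[N,N']$, i.e.\ $y'$ is in the same atom $A$, i.e.\ $y'\in A\cap D$; since also $W^u(1,y)\cap D$ misses $B_N$ (as $B_N$ is a union of sets saturated by nothing in particular — here one needs a small additional argument that a full unstable plaque through a point outside $B_N$ stays outside, which follows because the defining condition ``$\Psi^{-i}(\cdot)$ far from $\partial\cP$'' is, by the exponential contraction, satisfied on the whole plaque once it's satisfied with a slightly larger margin at the center — so I would build $B_N$ with margin $2c_0\lambda_0^{-i}$ instead). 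Thus $W^u(1,y)\cap D\subset E\cap D$, which is exactly the $\mf g^+$-tubular condition.

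\textbf{Main obstacle.} The routine parts are the measure estimate on $B_N$ and the Markov argument over atoms. The one genuine subtlety — the step I'd be most careful about — is the plaque-saturation issue: showing that the ``good'' set $E\cap D$ is saturated by \emph{whole} local unstable plaques $W^u(1,y)$, not just that individual nearby points stay in the same atom. This requires choosing the safety margin in the definition of $B_N$ uniformly over the $\mf g^+$-plaque (using that the plaque has bounded $\mf g^+$-diameter $\leq 1$ and that $Ad(g)^{-i}$ contracts $\mf g^+$ uniformly, Lemma~\ref{lem:behad}\textbf{A.}), together with the local-product/injectivity structure of the HC-set $D$ so that ``same atom of $\cP$ along the backward orbit'' genuinely forces ``same atom $A$.'' Once the margins are set up correctly this is immediate, but it is the place where the geometry of $\cP$ (smooth boundaries, \eqref{eq:masss}) and the hyperbolicity of $\Psi$ on $\mf g^+$ both enter, so it deserves the careful writing.
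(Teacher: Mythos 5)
Your proposal is correct and follows essentially the same route as the paper: define a bad set of points whose backward iterates $\Psi^{-i}(\cdot)$, $i\geq N$, come within $c_0\lambda_0^{-i}$ of $\partial\cP$ (the paper's sets $G_k$), bound its measure by a geometric series using $\Psi$-invariance and the smoothness of the partition boundaries, apply Markov's inequality over atoms, and take $E=A\setminus(\text{bad set})$. Your extra care about plaque saturation (the margin in the definition of the bad set) is a point the paper handles implicitly by defining $G_k$ directly in terms of whole plaques $W^u(\xi,x)\cap D$ crossing an atom boundary, but the substance is identical.
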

\begin{proof}The proof follows the scheme of the proof of Lemma 2.1.\ in \cite{OrnsteinWeiss} and uses (exponential) contraction of the local unstable space for negative times. More precisely, define $G_k$ by setting  $x\in G_k$ if there exists an atom $\Psi^kA\in \Psi^k\cP$ such that $x\in D\cap \Psi^kA$ but $W^u(\xi,x)\cap D$ is not a subset of $D\cap \Psi^kA$. Notice that this is equivalent, to $\Psi^{-k}(D\cap W^u(\xi,x))$ being $d_k$ close to the boundary of $A$, where, by $\textbf{A.}$ in Lemma \ref{lem:behad} and \eqref{eq:masss},
$$
 d_k\leq c_0 \lambda_0^{-k}.
$$
Indeed, for $\exp(B)x\in W^u(\xi,x)$, $B\in C(\xi, G^+)$, we have
$$
d_M(\Psi^{-k}x,\Psi^{-k}\exp(B)x)\leq d_G(\Psi^{-k}\exp(B)\Psi^k,e)
$$
and we use $\textbf{A}$. Using this, the fact that  $\mu$ is  $\Psi$ and also using that $A$ has smooth boundaries it follows that
$$
\mu(G_k)\leq c'_0\lambda^{-k},
$$
for some constant $c'_0>0$. Let $N_{\epsilon''}$ be such that if $G:=\bigcup_{k\geq N_{\epsilon''}}^{+\infty}G_k$, then $\mu(G)<\epsilon''^2$.
Using Markov's inequality, it follows that $\epsilon''$ almost every atom $A\in \bigvee_{N}^{N'}\Psi^k\cP$, we have $\mu(A\cap G)\leq \epsilon'' \mu(G)$(otherwise by summing over this $A$ would contradict $\mu(G)<\epsilon''^2$). This implies that it is enough to define $E:=A\cap G^c$, and then $E$ satisfies the assertion of the lemma.
\end{proof}

The following lemma will be needed in the proof of Theorem \ref{thm:main2}. This lemma is not needed in Dani's paper, \cite{Dani3}, i.e.\ if $Ad(g)$ is semisimple on $\mf g^0$ but plays an important role otherwise, recall \eqref{eq:hc}.

\begin{lemma}\label{lem:ext}For every $\epsilon'>0$ there exists $\epsilon''>0$ such that  for every $0<\delta< \epsilon''$  there exists a finite set $\{x_j\}_{j=1}^{N(\delta)}\subset M$ such that:
\begin{itemize}
\item[${\bf d1.}$] for $i\neq j$, we have $D(\delta,x_j)\cap D(\delta,x_i)=\emptyset$;
\item[${\bf d2.}$] we have $\mu\left(\bigcup_{j=1}^{N(\delta)}D(\delta,x_j)\right)\geq 1-\epsilon'$;
\item[${\bf d3.}$] there exists a compact set $K_{\epsilon'}$ such that for every $\delta\in (0,\epsilon'')$, $\bigcup_{j=1}^{N(\delta)}D(\delta,x_j)\subset K_{\epsilon'}$.
\end{itemize}

\end{lemma}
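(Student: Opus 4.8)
The plan is to build the family $\{D(\delta,x_j)\}$ as a maximal packing of HC-sets and then use a covering/Vitali argument, combined with the regularity of the Haar measure on $M$, to get full measure up to $\epsilon'$. First I would fix $\epsilon'>0$ and choose a compact set $K_{\epsilon'}\subset M$ with $\mu(K_{\epsilon'})\geq 1-\epsilon'/2$; this will be the set in ${\bf d3.}$, and by construction it does not depend on $\delta$. Choose $\epsilon''>0$ small enough (depending on $\epsilon'$ and $K_{\epsilon'}$) so that for all $\delta<\epsilon''$ every HC-set $D(\delta,x)$ with $x\in K_{\epsilon'}$ is an injective image of the corresponding cube in $G$ (use the systole bound, Lemma \ref{lem:systole}, and $r_\epsilon$ as in Section \ref{sec:par}), and so that Lemma \ref{lema:Jac}$(iii)$--$(iv)$ apply: the Haar measure of $D(\delta,x)$ is then comparable, within a factor $1\pm\epsilon'/100$, to $Leb(C(\delta,\mf g))$, uniformly in $x\in K_{\epsilon'}$.

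Next I would run a greedy maximal-packing construction: pick $x_1\in K_{\epsilon'}$ arbitrarily, and having chosen $x_1,\ldots,x_{k}$, if there exists $x\in K_{\epsilon'}$ with $D(\delta,x)\cap D(\delta,x_i)=\emptyset$ for all $i\leq k$, add such an $x$ as $x_{k+1}$; otherwise stop. Since all the $D(\delta,x_j)$ are disjoint and have measure bounded below (by the uniform comparison with $Leb(C(\delta,\mf g))$) the process terminates after finitely many steps $N(\delta)$, giving ${\bf d1.}$ immediately, and ${\bf d3.}$ holds because each $x_j\in K_{\epsilon'}$ so $D(\delta,x_j)\subset K'_{\epsilon'}$, where $K'_{\epsilon'}$ is the (compact) $\epsilon''$-neighbourhood of $K_{\epsilon'}$ — I would simply rename $K'_{\epsilon'}$ as the compact set claimed in ${\bf d3.}$, which is legitimate since it is fixed once $\epsilon',\epsilon''$ are. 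For ${\bf d2.}$, maximality says that for every $x\in K_{\epsilon'}$ the set $D(\delta,x)$ meets some $D(\delta,x_j)$; a standard Vitali-type doubling of the parallelograms then shows $K_{\epsilon'}\subset \bigcup_j D(C\delta,x_j)$ for a dimensional constant $C$. Comparing measures, $\mu\big(\bigcup_j D(C\delta,x_j)\big)\leq C'\sum_j\mu(D(\delta,x_j)) = C'\mu\big(\bigcup_j D(\delta,x_j)\big)$, so $\mu\big(\bigcup_j D(\delta,x_j)\big)\geq (\mu(K_{\epsilon'}))/C' \geq (1-\epsilon'/2)/C'$, which is not yet good enough.

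To upgrade this to a genuine $1-\epsilon'$ bound I would instead iterate: having found a packing covering a definite proportion $c>0$ of the remaining mass, delete (a neighbourhood of) the packed region from $K_{\epsilon'}$ and repeat inside the complement; after finitely many rounds the unpacked mass drops below $\epsilon'/2$, and taking the union of all the (finitely many, mutually disjoint after a further thinning) HC-sets produced gives a family with $\mu\big(\bigcup D(\delta,x_j)\big)\geq 1-\epsilon'$. Alternatively, and more cleanly, one can invoke the Lebesgue density / Besicovitch covering theorem directly on $M$ (which is locally Euclidean via $\exp$, so Besicovitch applies with the family of parallelograms $D(\delta,x)$): for $\mu$-a.e.\ point the parallelograms shrink nicely, so one extracts a countable disjoint subfamily covering $\mu$-a.e.\ of $K_{\epsilon'}$, and then takes a finite subfamily exhausting all but $\epsilon'/2$ of the mass. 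I expect the main obstacle to be the uniformity in $\delta$ required by ${\bf d3.}$: one must make all the combinatorial and measure-comparison constants depend only on $\epsilon'$ (through $K_{\epsilon'}$ and the injectivity radius lower bound on it) and not on the scale $\delta$, which is exactly why fixing $K_{\epsilon'}$ first and only then choosing $\epsilon''$ is essential. The non-isotropy of the parallelograms (the $\mf g^+$ versus $\mf g^0\oplus\mf g^-$ split in \eqref{eq:hc}) is harmless here since $\delta$ is the same in both factors, so $D(\delta,x)$ is comparable to a genuine $\delta$-ball by Lemma \ref{lema:Jac}$(iv)$, and all of the covering lemmas apply verbatim.
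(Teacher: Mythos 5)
There is a genuine gap in your argument for ${\bf d2}$. The lemma requires \emph{all} the HC-sets in the family to have the \emph{same} size $\delta$, and this is exactly what defeats every covering device you invoke. A maximal (greedy) packing by congruent sets need not have density close to $1$: already in dimension one, unit intervals placed at $0,\,1.5,\,3,\,4.5,\dots$ form a maximal packing missing a third of the measure, and for ball-like sets the density of any packing by congruent copies is bounded away from $1$ by a universal constant. Your first fix, iterating after deleting the packed region, cannot work because a maximal packing by definition admits no further disjoint set of the same size $\delta$ — the leftover region consists of gaps too thin to contain another $D(\delta,x)$, yet of positive total measure. Your second fix, the Besicovitch/Vitali ``fill up to measure zero'' theorem, requires a \emph{fine} cover (sets of arbitrarily small diameter at each point) and produces a disjoint subfamily of \emph{varying} sizes; it does not apply when the size is frozen at $\delta$. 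So the Vitali-type argument leaves you stuck at a fixed proportion $1/C'$, exactly as you noticed, and neither proposed upgrade closes the gap.

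The paper's proof avoids this by exploiting that the $D(\delta,x)$ are \emph{cubes}, which tile: after reducing to cubes $C(\delta,G)x$ via Lemma \ref{lema:Jac}$(iv)$ and partitioning $K_{\epsilon'}$ into small pieces $R_k$ of diameter below the systole, it places the centers on an explicit grid $x_{\bf a}=\exp({\bf a}{\bf V})x$ with ${\bf a}\in[2(1+\epsilon')\delta]\Z^{\dim\mf g}$. Disjointness (${\bf d1}$) follows from the BCH estimate in Lemma \ref{lema:Jac}$(ii)$ together with the systole bound (Lemma \ref{lem:systole}), and near-full coverage (${\bf d2}$) follows because the slightly enlarged cubes $C((1+3\epsilon')\delta,G)x_{\bf a}$ over the grid cover all of $R_k$ away from an $O(\delta)$-neighbourhood of $\partial R_k$, whose measure tends to $0$ with $\delta$ since the boundaries are piecewise smooth. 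Your treatment of ${\bf d3}$ (fix $K_{\epsilon'}$ first, then choose $\epsilon''$, then enlarge to a compact neighbourhood) is fine and matches the paper, but the heart of the lemma is the grid construction, and that idea is missing from your proposal.
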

\begin{proof}By $(iv)$ in Lemma \ref{lema:Jac} it follows that it is enough to show  the existence of a family
$\{C(\delta,G)x_j\}_{j=1}^{N(\delta)}$ satisfying ${\bf d1.}$, ${\bf d2.}$ and ${\bf d3.}$ (for $\epsilon'=\epsilon'/10$). Let $\bar{K}_{\epsilon'}\subset M$ be a compact set with $\mu(\bar{K}_{\epsilon'})>1-\epsilon'/10$. Similarlyy to section \ref{sec:par}, we cover $\bar{K}_{\epsilon'}\subset \bigcup_{x\in \bar{K}_{\epsilon'}}B(x,r_{\epsilon'})$ and let 
$$
\bar{K}_{\epsilon'}\subset \bigcup_{j=1}^{m(\epsilon')}B(x_j,r_{\epsilon'})
$$
be a finite subcover. Let then $K_{\epsilon'}$ be the closure of $\bigcup_{j=1}^{m(\epsilon')}B(x_j,r_{\epsilon'})$, i.e.
$$
K_{\epsilon'}=\overline{\bigcup_{j=1}^{m(\epsilon')}B(x_j,r_{\epsilon'})}.
$$ 
 Notice that $\mu(K_{\epsilon'})\geq \mu(\bar{K}_{\epsilon'})\geq 1-\epsilon'/10$. Analogously to the construction of the sets $\{P_j\}_{j=1}^{m(\epsilon')}$ in Section \ref{sec:par}, let  $\mathcal{R}=\{R_k\}_{k=1}^{W(\epsilon')}$  be any partition of $ K_{\epsilon'}$  such that $\mu(R_1)\leq \epsilon'/10$ and every $R_k$, $k\geq 2$ has (piecewise) smooth boundary and $diam(R_k)<\min\left(sys(K_{\epsilon'})^2,\epsilon'^2\right)$ (see Lemma \ref{lem:systole}). We will WLOG assume that $sys(\epsilon')=sys(K_{\epsilon'})<\epsilon'^2$ (if not, we consider $\tilde{sys}(\epsilon)=\min(sys(\epsilon),\epsilon^2)$. We will show that for every $\delta$ small enough and every $k\in\{2,\ldots W\}$, we can find a family $\{C(\delta,G)x'_j\}_{j=1}^{p(k,\delta)}$ such that $C(\delta,G)x'_j\subset R_k$ for every $j\in \{1,\ldots,p\}$,
$C(\delta,G)x'_j\cap C(\delta,G)x'_{j'}=\emptyset$ for $j\neq j'$ and
\be\label{eq:esrt}
\mu\left(\bigcup_{j=1}^{p}C(\delta,G)x'_j\right)\geq (1-\epsilon')\mu(R_k).
\ee
The general result then follows by taking the union over all $k\in \{2,\ldots,W\}$ of the families $\{C(\delta,G)x'_j\}_{j=1}^{p(k,\delta)}$.

Let $x$ be any point in $R_k$. and let  $Gr:=[2(1+\epsilon')\delta]\Z^{\dim \mf g}\cap[0,\frac{sys(\epsilon')}{3}]^{\dim\mf g}$ be a grid $\in [0,\frac{sys(\epsilon')}{3}]^{\dim \mf g}$ of size $2(1+\epsilon')\delta$. Let $ Gr'\subset Gr$ be  such that for any ${\bf a}\in Gr'$, we have $x_{{\bf a}}:=\exp({\bf a}{\bf V})x \in R_k$ and  also $C(\delta,G)x_{{\bf a}}\subset R_k$. We then define the family
\be\label{def:fam}
\left\{C(\delta,G)x_{{\bf a}}\right\}_{\bf a\in Gr'}.
\ee
Notice first that if ${\bf a}, {\bf a'}\in Gr'$ then
\be\label{eq:emy}
C(\delta,G)x_{{\bf a}}\cap C(\delta,G)x_{{\bf a'}}=\emptyset.
\ee
Indeed, if not then for some $\gamma \in \Gamma$ and $g,g'\in C(\delta,G)$, we have
$$
g\exp({\bf a}\mathcal{V})x=g'\exp({\bf a'}\mathcal{V})x\gamma.
$$
If $\gamma=e$, then the above equation becomes
$$
g'^{-1}g=\exp({\bf a'}\mathcal{V})\exp(-{\bf a}\mathcal{V}).
$$
Since $g',g\in C(\delta, G)$, by $(ii)$ in  Lemma \ref{lema:Jac} it follows that
$$
\|\log_{alg}(g'^{-1}g)\|\leq 2\delta +{\rm O}(\kappa_2 \delta^2)<2(1+\epsilon'/10)\delta.
$$
Similarly, we write ${\bf a'}\mathcal{V}={\bf a}\mathcal{V}+[{\bf a'}-\bf{a}]\mathcal{V}$ and using $[{\bf a}\mathcal{V},{\bf a}\mathcal{V}]=0$, the fact that $\|{\bf a'}-\bf{a}\|=2(1+\epsilon')\delta$ and ${\bf a}\leq sys(\epsilon')/3$, by $(ii)$ in Lemma \ref{lema:Jac}, we get
$$
 \|\log_{alg}(\exp({\bf a'}\mathcal{V})\exp(-{\bf a}\mathcal{V}))\|\geq 2(1+\epsilon)\delta+ {\rm O}(2(1+\epsilon)\kappa_2 sys(\epsilon)/3\delta)>2(1+\epsilon/3)\delta.
$$
The two last inequalities are contradictory and this finishes the proof if $\gamma=e$.
On the other hand, if $\gamma\neq e$, then the above equation transforms to
$$
x\gamma x^{-1}=\exp(-{\bf a'}\mathcal{V})g'^{-1}g\exp({\bf a}\mathcal{V}).
$$
Notice however that by triangle inequality, we have
$$
d_G(\exp(-{\bf a'}\mathcal{V})g'^{-1}g\exp({\bf a}\mathcal{V}),e)\leq 2\delta+2sys(\epsilon')/3,
$$
which contradicts the fact that $x\in K_{\epsilon'}$ (since, by Lemma \ref{lem:systole}, $d_G(x\gamma x^{-1},e)>sys(\epsilon')$) if $\delta$ is small enough. So \eqref{eq:emy} indeed holds. So it remains to show \eqref{eq:esrt} for the family in \eqref{def:fam}.

For this aim, we will first show that
\be\label{lst}
\{y\in R_k\;:\; d_M(y,\partial R_k)>4(1+\epsilon')\delta\}\subset
\bigcup_{{\bf a}\in Gr'}C((1+3\epsilon')\delta,G)x_{{\bf a}}.
\ee
Indeed, notice first that by the definition of $Gr'$ it follows that if $d_M(x_{\bf{a}},\partial R_k)>2\delta$, then $C(\delta,G)x_{{\bf a}}\subset R_k$. Take $y\in R_k$ such that $d_M(y,\partial R_k)>\delta$.
Since $diam(R_k)<sys(\epsilon)^2$ it follows that there exists $\gamma\in \Gamma$ and ${\bf q}\in \left[0,\frac{sys(\epsilon')}{3}\right]^{\dim \mf g}$ such that $y=\exp({\bf q}{\bf V})x\gamma$. Let ${\bf a''}\in Gr$ be a point which minimizes $\|{\bf q}-{\bf a}\|$ over  all ${\bf a}\in Gr$. By the definition of the set $Gr$ it follows that $\|{\bf q}-{\bf a''}\|\leq (1+\epsilon')\delta$. Notice that
$$
y=\exp({\bf q}{\bf V})x\gamma=\exp([{\bf q}-{\bf a}]{\bf V}+{\bf a}{\bf V})x\gamma.
$$
By Lemma $(ii)$ in Lemma \ref{lema:Jac}  and since $[{\bf a''}{\bf V},{\bf a''}{\bf V}]=0$ it follows that
\begin{multline*}
\left\|\log_{alg}\left[\exp([{\bf q}-{\bf a''}]{\bf V}+{\bf a}''{\bf V})\exp(-{\bf a''}{\bf V})\right]\right\|= {\bf q}-{\bf a''}+{\rm O}((1+\epsilon')\delta\|\bf a''\|)\leq\\ (1+\epsilon')\delta+ \epsilon'^2(1+\epsilon)\delta<\delta+2\epsilon'\delta.
\end{multline*}

Therefore  $y\in C((1+3\epsilon')\delta,G)x_{{\bf a''}}$. It remains to show that $\bf{a''}\in Gr'$ (which is equivalent to $C(\delta,G)x_{{\bf a''}}\subset R_k$). Indeed, notice that
\begin{multline*}
d_M(x_{{\bf a''}},\partial R_k)\geq d_M(y,\partial R_k)-d_M(y,\exp({\bf a''}\mathcal{V})x)\geq \\
4(1+\epsilon')\delta-d_G(\exp({\bf q}\mathcal{V}),\exp({\bf a''}\mathcal{V}))\geq 4(1+\epsilon')\delta-2(1+\epsilon')\delta=2(1+\epsilon')\delta.
\end{multline*}
Consequently, $C(\delta,G)x_{{\bf a''}}\subset R_k$ and hence ${\bf a''}\in Gr'$. This finishes the proof of \eqref{lst}. By $(iii)$ in Lemma \ref{lema:Jac}, \eqref{lst} and \eqref{eq:emy}, we have  
\begin{equation}
\begin{aligned}
&\frac{1+\epsilon'}{1-\epsilon'}(1+3\epsilon')^{\dim\mathfrak{g}}\mu\left(\bigcup_{{\bf a}\in Gr'}C(\delta,G)x_{{\bf a}}\right)=\sum_{{\bf a}\in Gr'}\frac{1+\epsilon'}{1-\epsilon'}(1+3\epsilon')^{\dim\mathfrak{g}}\mu\left(C(\delta,G)x_{{\bf a}}\right)\\&\geq\sum_{{\bf a}\in Gr'}\mu\left(C((1+3\epsilon')\delta,G)x_{{\bf a}}\right)\geq\mu\left(\bigcup_{{\bf a}\in Gr'}C((1+3\epsilon')\delta,G)x_{{\bf a}}\right)\geq\\&\mu\left(\{x\in R_k\;:\; d_M(x,\partial R_k)>4(1+\epsilon')\delta\}\right)\geq (1-\phi(\delta))\mu(R_k),
\end{aligned}
\end{equation}
where $\lim_{\delta\to 0}\phi(\delta)=0$ (since the boundary of $R_k$ is piecewise smooth). So taking sufficiently small $\delta$
(if neccesary, we can make  $\delta$ smaller when $k$ ranges over $\{2,\ldots, W\}$) and changing $\epsilon'=\epsilon'''$ so that $1-\epsilon''';\geq \frac{1+\epsilon'}{1-\epsilon'}(1+3\epsilon')^{\dim\mathfrak{g}}$ proves \eqref{eq:esrt} and hence finishes the proof of the lemma.
\end{proof}

\subsection{Equidistribution estimates}\label{sec:eq}
In this section we will establish quantitative (polynomial) equidistribution 
estimates of the horospherical subgroup under the action of $\Psi$.  We will consider the family of cubes from Lemma \ref{lem:ext} i.e.\ for $\delta_k= (\log k)^{-r'}$, let
$$
\left\{D\left(\delta_k,x_j\right)\right\}_{j=1}^{N_k}
$$
be the family from the statement of Lemma \ref{lem:ext}. Notice that by ${\bf d1}$ and ${\bf d2}$ it follows that $N_k\leq (\log k)^{r'\dim \mf g}$. Let morever $K_\epsilon$ be the compact set from ${\bf d3}$.
We have the following lemma:

\begin{lemma}\label{eq:distcube}For every $\epsilon>0$ and every $\xi\in (0,\epsilon)$ there exists $k_\epsilon$ such that for every $k\geq k_\epsilon$, every $N\geq \log k$ and every $x\in K_\epsilon$, we have
\begin{multline}\label{eq:KS}
(1-\epsilon^2)Leb^+(C(\xi,\mf g^+))\mu\left(D\left(\delta_k,x_j\right)\right)\leq \\
Leb^+\left(P\in C(\xi,\mf g^+)\;:\; \psi^N\exp(P)x\in D\left(\delta_k,x_j\right)\right)\leq\\
 (1+\epsilon^2)Leb^+(C(\xi,\mf g^+))\mu\left(D\left(\delta_k,x_j\right)\right).
\end{multline}
\end{lemma}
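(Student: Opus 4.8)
The statement is an effective equidistribution estimate for the expanding horospherical subgroup $G^+$ acting on the pieces $D(\delta_k, x_j)$, and the natural route is to deduce it from the quantitative mixing Lemma \ref{lem:fff} via a smooth approximation argument, exactly in the spirit of Proposition 2.4.8 of \cite{KlMa} (which is cited as the source). First I would set up smooth bump functions: fix $\xi \in (0,\epsilon)$ and the target set $D := D(\delta_k, x_j)$, and choose $\phi_\pm \in C_c^\infty(M)$ with $\mathbf{1}_{D^{-}} \le \phi_- \le \mathbf{1}_{D} \le \phi_+ \le \mathbf{1}_{D^{+}}$, where $D^{\mp}$ are inner/outer $\rho$-neighbourhoods of $D$ for a small scale $\rho$ to be chosen as a function of $\delta_k$ and $\epsilon$. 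The key point is that since $\operatorname{diam}(D) \asymp \delta_k = (\log k)^{-r'}$, one can take these bump functions with $\mathcal{S}_d(\phi_\pm) \lesssim \rho^{-d} \cdot \mu(D)^{1/2}$ (derivatives cost a factor $\rho^{-1}$ each, and the $L^2$ mass is controlled by $\mu(D^+)^{1/2} \approx \mu(D)^{1/2}$; here one uses that $D \subset K_\epsilon$ so the injectivity radius is bounded below and these local estimates are uniform).

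**The mixing step.** Next, parametrize the local unstable piece $W^u(\xi, x) = C(\xi, G^+)x$ and rewrite the quantity $\mathrm{Leb}^+\{P \in C(\xi,\mf g^+) : \Psi^N \exp(P) x \in D\}$ as (a quantity comparable to, via (iii) of Lemma \ref{lema:Jac}) an integral $\int_{C(\xi,G^+)} \mathbf{1}_D(\Psi^N h x)\, dh$ over Haar measure $dh$ on $G^+$. Sandwich $\mathbf{1}_D$ between $\phi_-$ and $\phi_+$, and then apply Lemma \ref{lem:fff} with one function a smoothing (in the $G^+$ and transverse directions near $x$) of the normalized arc-length measure on $W^u(\xi,x)$ and the other $\phi_\pm$; since $g^{-N}$ contracts $G^+$, or equivalently $g^N$ expands it, the measure $\Psi^N_*(\text{unstable measure})$ equidistributes, with error $C e^{-\zeta d_G(g^N, e)} \mathcal{S}_d(\phi_\pm) \mathcal{S}_d(\psi) \lesssim e^{-\zeta c N} \cdot \rho^{-d} \mu(D)^{1/2} \cdot \rho'^{-d}$ for the two Sobolev norms. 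Collecting terms, the relative error is of order $e^{-\zeta c N} \rho^{-2d} \mu(D)^{-1/2}$ plus the geometric boundary error $\mu(D^+ \setminus D^-)/\mu(D) \lesssim \rho/\delta_k$ coming from the gap between $\phi_-$ and $\phi_+$.

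**Balancing the errors.** The final step is to choose $\rho$ so that both error terms are $\le \epsilon^2/2$. Take $\rho = \epsilon^3 \delta_k$ (so the boundary error is $\lesssim \epsilon^3$); then the mixing error becomes $\lesssim e^{-\zeta c N} (\epsilon^3 \delta_k)^{-2d} \mu(D)^{-1/2}$. Since $\mu(D) \asymp \delta_k^{\dim \mf g}$ and $\delta_k = (\log k)^{-r'}$, the negative power of $\delta_k$ and of $\mu(D)$ is merely polylogarithmic in $k$, namely $(\log k)^{O(1)}$, whereas the hypothesis $N \ge \log k$ makes $e^{-\zeta c N} \le k^{-\zeta c}$ — a genuine power of $k$. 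Hence for $k \ge k_\epsilon$ (depending on $\epsilon$, hence on $\xi$ and the fixed constants $C, \zeta, d, c_0, \lambda_0, r'$) the mixing error is $\le \epsilon^2/2$, uniformly over $x \in K_\epsilon$, over all $N \ge \log k$, and over all the finitely many $j \le N_k$. This yields \eqref{eq:KS}. The main obstacle, and the point requiring the most care, is the Sobolev-norm bookkeeping: making precise that a suitably smoothed unstable-leaf measure has Sobolev norm bounded polynomially in $\rho^{-1}$ with the volume-dependence displayed above, uniformly over the compact set $K_\epsilon$ (this is where the precise form of Proposition 2.4.8 of \cite{KlMa} is invoked), together with checking that the approximation (iii) in Lemma \ref{lema:Jac} relating $\mathrm{Leb}^+$ on the cube to Haar measure on $G^+$ contributes only a $(1+O(\epsilon))$ multiplicative distortion that can be absorbed.
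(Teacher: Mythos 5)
Your proposal is correct and follows essentially the same route as the paper: smooth inner/outer approximations of $\chi_{D(\delta_k,x_j)}$ (lifted to $M$ using the systole bound on $K_\epsilon$) and of $\chi_{C(\xi,\mf g^+)}$, with Sobolev norms polylogarithmic in $k$ respectively polynomial in $\xi^{-1}$, fed into an effective equidistribution statement for $G^+$-translates, with the error $e^{-\lambda N}\le k^{-\lambda}$ beating the polylog losses since $N\ge\log k$. The only cosmetic difference is that the paper invokes (a version of) Proposition 2.4.8 of \cite{KlMa} as a black box for the leafwise equidistribution, whereas you sketch re-deriving it from the mixing Lemma \ref{lem:fff} by thickening the unstable leaf, which is precisely what that proposition's proof does.
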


The result follows from Proposition 2.4.8. in \cite{KlMa} together with an approximation argument. We will give the proof in the appendix.

\section{Proof of the VWB property}\label{sec:vwb}
\subsection{Good returns for most points}
 Recall that the partition $\cP$ is now fixed (see Subsection \ref{sec:par}). We will use Lemma \ref{lem:VWE} to establish the VWB property for $\Psi$. Fix $\epsilon>0$. Let  ${\bf D}=\{D(\epsilon'',x_j\}_{j=1}^{N(\epsilon'')}$ satisfying ${\bf d1}$, ${\bf d2}$ and ${\bf d3}$ (we wlog assume that $\epsilon''\leq \epsilon^{100}$). Notice that since $\epsilon''$ depends on $\epsilon$ only, by the $K$-property of $\Psi$ (see Definition \ref{Kprop}) with $B_j=D(\epsilon'',x_j)$ and $\epsilon=\frac{\epsilon\mu(D_j)}{10N(\epsilon'')}$, we get that there exists $N_j(\epsilon,B_j)$ such that for every $N'\geq N\geq N_j$ and for $\frac{\epsilon\mu(D_j)}{50N(\epsilon'')}$ a.e.\ atom  $A\in \bigvee_{N}^{N'}\Psi^i(\cP)$, we have
\be\label{eq:ada0}
\left|\frac{\mu(A\cap D_j)}{\mu(A)}-\mu(D_j)\right|\leq \frac{1}{50}\epsilon\mu(D_j). 
\ee
Defining $N_2'=\max_{j\in\{1,\ldots, N(\epsilon'')\}} N_j$, by the above it follows that for every $N'\geq N\geq N_2'$ and for $\epsilon$ a.e.\ atom  $A\in \bigvee_{N}^{N'}\Psi^i(\cP)$,we have
\be\label{eq:ada}
\left|\frac{\mu(A\cap D_j)}{\mu(A)}-\mu(D_j)\right|\leq \frac{1}{10}\epsilon\mu(D_j),
\ee
for all $j\in \{1,\ldots,N(\epsilon'')\}$.

Let $\partial \cP=\bigcup_{P\in \cP}\partial P$, and let $V_{\epsilon'}(\partial \cP)$ be the $\epsilon'$ neihborhood of $\partial \cP$,where $\epsilon'$ is such that $\mu(V_{\epsilon'}(\partial \cP))<\epsilon/20$ (see \eqref{eq:masss} in Subsection \ref{sec:par}). Since $\epsilon'$ is an explicit function of $\epsilon$ we will below use $\epsilon$ instead of $\epsilon'$. Let $p=\dim \mf g^+$.
The following proposition is important in the proof of Theorem \ref{thm:main2}:
\begin{proposition}
\label{prop:match}Let $D\in {\bf D}$. There exists $N_3$ such that for every $S,N\geq N_3$ and $\epsilon/50$ a.e.\ atom $A\in  \bigvee_{N_3}^N\Psi^i(\cP)$ there exists an $\epsilon/50$ -measure preserving map $\theta_S:(A\cap D,\mu_{|A\cap D})\to (D,\mu_{|D})$ such that
\be\label{eq:aas}
d(\Psi^k(x),\Psi^k(\theta_Sx))<\epsilon/10\text{ for  } (1-\epsilon^2) \text{ proportion of } k\in \{0,\ldots,S-1\}.
\ee
\end{proposition}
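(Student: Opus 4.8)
The plan is to realize $\theta_S$ by combining the local product structure of the HC-set $D$ with the map $\tilde\theta$ between unstable pieces whose existence is the content of Proposition~\ref{mainlemma}. First I would fix $D=D(\epsilon'',x)\in {\bf D}$, write $D=[C(\epsilon'',G^+)\times C(\epsilon'',G^0\times G^-)]x$, and use Lemma~\ref{lem:intersect} to pass to the subset $E\subset A$ with $\mu(E)\ge (1-\epsilon'')\mu(A)$ on which $E$ intersects $D$ in a $\mf g^+$-tubular subset; thus $A\cap D\cap E$ is, up to the product identification, of the form $W^u(\epsilon'',z)$ fibered over a subset $B\subset C(\epsilon'',G^0\times G^-)$ of the transversal. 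The conditional measures of $\mu_{|D}$ along the unstable fibers are (by Lemma~\ref{lema:Jac}(iii)) within a factor $(1\pm\epsilon)$ of $Leb^+$, so it suffices to build the matching fiberwise.

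Second, for a full-measure-up-to-$\epsilon$ set of transversal points $z$ I would invoke Proposition~\ref{mainlemma} to obtain an $\epsilon'$-measure preserving map $\tilde\theta_z:W^u(\delta,z)\to W^u(\delta,z')$ (where $z'$ ranges over the transversal of $D$ matched to $z$ by, say, the $G^0\times G^-$-holonomy inside $D$), together with the orbit-closeness conclusion $d(\Psi^k w,\Psi^k\tilde\theta_z w)<\epsilon/10$ for a $(1-\epsilon^2)$-proportion of $k\in\{0,\dots,S-1\}$. I would then define $\theta_S$ on $A\cap D$ by applying $\tilde\theta_z$ on each unstable fiber and moving the transversal coordinate by the holonomy; the inequality \eqref{eq:ada} (which controls $\mu(A\cap D_j)/\mu(A)$ and hence guarantees that the transversal mass of $A\cap D$ is close to $\mu(D)$'s transversal mass) is what makes $\theta_S$ globally $\epsilon/50$-measure preserving once the fiberwise maps are $\epsilon'$-measure preserving and the set of ``bad'' fibers has measure $<\epsilon'$. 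Choosing $N_3$ large enough that Lemma~\ref{lem:intersect}, \eqref{eq:ada}, and the hypotheses of Proposition~\ref{mainlemma} all hold for $N\ge N_3$ (and noting $S$ enters only through Proposition~\ref{mainlemma}), the estimate \eqref{eq:aas} follows because a point $x\in A\cap D$ and $\theta_S x$ differ only in their unstable coordinate (handled by Proposition~\ref{mainlemma}) and in their $G^0\times G^-$ coordinate, the latter being controlled for times up to $S$ by Lemma~\ref{lem:quasi} — here one uses $\epsilon''\le\epsilon^{100}$ so the transversal is far smaller than the $N^{-r_0}$ threshold — and because $x\notin V_{\epsilon'}(\partial\cP)$ for $(1-\epsilon/20)$ of the mass, so $\epsilon/10$-closeness in $d_M$ forces the $\cP$-names to agree.

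The main obstacle, and the place where essentially all the work sits, is of course Proposition~\ref{mainlemma}: constructing the $\epsilon'$-measure preserving map $\tilde\theta$ between unstable pieces with the stated orbit-tracking property. Everything in the present proposition is a bookkeeping argument that glues that input to the product structure of $D$ and to the $K$-property estimate \eqref{eq:ada}. The only genuinely delicate points internal to this proposition are (i) checking that the holonomy used to match transversals of $D$ distorts the transversal measure by at most $(1\pm\epsilon)$ — this is Lemma~\ref{lem:hol} together with the smallness of $\epsilon''$ — and (ii) verifying that the two small error contributions (the $\epsilon^2$-proportion of bad times coming from Proposition~\ref{mainlemma}, and the at most $\epsilon/20$-mass of orbit segments that meet $V_{\epsilon'}(\partial\cP)$ at a definite proportion of times, controlled by the $\Psi$-invariance of $\mu$ and Markov's inequality) together keep the proportion of good times at $1-\epsilon^2$ after adjusting the $\epsilon$'s. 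I would absorb these by working throughout with $\epsilon''\le\epsilon^{100}$ and renaming the final error as in the statement.
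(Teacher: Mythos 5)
Your overall skeleton --- tubular intersection via Lemma \ref{lem:intersect}, the product structure of $D$, and a fiberwise application of Proposition \ref{mainlemma} glued together by Fubini and \eqref{eq:ada} --- is exactly the paper's route. However, both of the ``delicate points'' you single out are handled incorrectly, and the second would make the proof fail. On the first: the transversal matching cannot be a holonomy, and \eqref{eq:ada} does not say that the transversal mass of $A\cap D$ is close to that of $D$. It says $\mu(A\cap D)\approx\mu(A)\mu(D)$; since $A$ is an atom of $\bigvee_N^{N'}\Psi^i\cP$, its measure is tiny, so after quotienting by the (full) unstable fibers the transversal footprint of $A\cap D$ occupies only a $\mu(A)$-proportion of $C(\epsilon'',G^0\times G^-)$. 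The map $\zeta$ from this footprint onto the whole transversal must therefore expand raw Lebesgue measure by a factor of order $1/\mu(A)$; a holonomy is a small perturbation of the identity (Lemma \ref{lem:hol}) and cannot do this. The paper instead takes $\zeta$ to be an arbitrary $\epsilon$-measure preserving map between the two \emph{normalized} transversal measure spaces, built by approximating the footprint by disjoint cubes and mapping them affinely onto subcubes of $C(\epsilon'',\mf g^0\oplus\mf g^-)$; its geometric position is irrelevant.

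The fatal step is your claim that the $G^0\times G^-$ displacement between $x$ and $\theta_S x$ --- which is of size up to $\epsilon''$, the transversal diameter of $D$ --- is ``controlled for times up to $S$ by Lemma \ref{lem:quasi}'' because $\epsilon''\le\epsilon^{100}$. Lemma \ref{lem:quasi} controls a displacement of size $N^{-r_0}$ for times up to $N$; here $S$ is arbitrary while $\epsilon''$ is a fixed constant, so the polynomial growth on the center blows up a displacement of size $\epsilon''$ after a time of order $(\epsilon'')^{-1/r'}$, independently of $S$. This is precisely the non-isometric-center difficulty the whole paper is designed to circumvent. The correct mechanism is that Proposition \ref{mainlemma} is stated for \emph{arbitrary} $z,z'\in K_\epsilon$ (not holonomy-related ones), and its conclusion \eqref{eq:distham} already is the full orbit-closeness statement in $d_M$ for the pair $(x,\tilde\theta x)$: the center control happens inside its proof, where the matched points are arranged to land at time $N\sim\epsilon^3 S$ on the same center-stable leaf of a cube of size $(\log k)^{-r'}\sim N^{-r'}$, which \emph{is} small enough for Lemma \ref{lem:quasi} to cover the remaining $S-N$ steps. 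So no center estimate is needed (or available) at the level of Proposition \ref{prop:match}; one only needs to quote \eqref{eq:distham} and do the measure bookkeeping.
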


Before we prove Proposition \ref{prop:match} let us show how it implies Theorem \ref{thm:main2}:
\begin{proof}[Proof of Theorem \ref{thm:main2}] We will use Lemma \ref{lem:VWE}.  By ergodic theorem for the characteristic function of $V_{\epsilon'}(\partial \cP)$ there exists a set $\tilde{A}_\Psi$, $\mu(\tilde{A}_\Psi)>1-\epsilon^8$ and $N_4=N_4(\epsilon)$, such that for every $x\in \tilde{A}_\Psi$ and every $N\geq N_4$, we have
\be\label{eq:betk}
{\rm card}\left\{i\in\{0,\ldots, N-1\}\;:\; \Psi^ix\notin V_{\epsilon'}(\partial \cP)\right\}\geq (1-\epsilon/2)N
\ee
  Let $N=\max(N_2',N_3,N_4)$ where $N_2'$ comes from \eqref{eq:ada} and  $N_3$ comes from Proposition \ref{prop:match}).
  
 Let $B_1=\cup_{D\in {\bf D}\setminus{\bf D'}}D$, where ${\bf D'}\subset  {\bf D}$ is such that for $D\in {\bf D'}$, we have
 $\mu(D\cap \tilde{A}_\Psi)\leq (1-\epsilon)\mu(D)$. Since $\mu(\tilde{A}_\Psi)\geq 1-\epsilon^8$, it follows that $\mu(B_1)\geq 1-\epsilon^3$.

 We say that $A\in  \bigvee_{N'_2}^N\Psi^i(\cP)$ is {\em good} if  $\mu(A\cap \tilde{A}_\Psi)\geq (1-\epsilon)\mu(A)$ and \eqref{eq:ada} is satisfied for $A$. Analogously, since $\mu(\tilde{A}_\Psi)\geq 1-\epsilon^8$, it follows that  $\epsilon^3$ a.e.\ atom is good. Fix a good atom $A$. If ${\bf D''}\subset {\bf D}$ is such that for $D\in \bf{D''}$, we have $\mu(A\cap \tilde{A}_\Psi\cap D)\leq (1-\epsilon^2)\mu(A\cap D)$, then a similar reasoning shows that $\mu(\bigcup_{D\in {\bf D''}}D)\leq \epsilon/2$. Let ${\bf D'''}=\bf{D'}\cup\bf{D''}$.
  For every $S\geq N$ we will construct a measure preserving map $\theta_S:A\to M$ such that the $S$-names of $x$ and $\theta_S(x)$ are the same except at most $\epsilon$ proportion.
 For this aim we use Proposition \ref{prop:match}. For a good atom $A$ and $D\in {\bf D } \setminus {\bf D'''}$, let $\tilde{\theta}_S=\theta_{S,D}:A\cap D\to D$ be the $\epsilon/50$ measure preserving map as in Proposition \ref{prop:match} so that \eqref{eq:aas} holds. Since $D\in {\bf D}\setminus {\bf D}'''$ it follows that $\frac{\mu(A\cap \tilde{A}_\Psi\cap D)}{\mu(A\cap D)}\in(1-\epsilon^2,1]$. Therefore $\tilde{\theta}_S$ descends to an $\epsilon/40$ measure preserving map $\theta_S:A\cap D\cap \tilde{A}_\Psi\to D$ (recall that we need to rescale the measure on $A\cap D\cap \tilde{A}_{\Psi}$).  
 This by the definition and \eqref{eq:ada} means that for every $B\subset A\cap D\cap \tilde{A}_{\Psi}$, we have 
\be\label{be:al}
\frac{\mu(A)\mu(\theta_{S,D}(B))}{\mu(A\cap D\cap \tilde{A}_{\Psi}\cap B)}=\frac{\mu(\theta_{S,D}(B))\mu(A\cap D\cap \tilde{A}_{\Psi})}{\mu(D)\mu(A\cap D\cap\tilde{A}_{\Psi} B)}\frac{\mu(A)\mu(D)}{\mu(A\cap D\cap \tilde{A}_{\Psi})}\in (1-\epsilon/10,1+\epsilon/10),
 \ee
 since both fractions are $\epsilon/30$ close to $1$ -- the first one since $\theta_{S,D}$ is $\epsilon/40$ measure preserving and the second by \eqref{eq:ada} and the fact that $\frac{\mu(A\cap \tilde{A}_\Psi\cap D)}{\mu(A\cap D)}\in(1-\epsilon^2,1]$.
  We then naturally extend the maps $\theta_{S,D}$ to a map 
$$
 \theta_S:A\cap\tilde{A}_\Psi\cap\bigcup_{D\in {\bf D}\setminus {\bf D'''}}D\to \bigcup_{D\in {\bf D}\setminus {\bf D'''}}D.
$$
We will use \eqref{be:al} to show that $\theta_S$ is $\epsilon/5$ measure preserving. Take  $B\in A\cap\tilde{A}_\Psi\cap\bigcup_{D\in {\bf D}\setminus {\bf D'''}}D$, and let $B_j:=A\cap\tilde{A}_\Psi\cap D_j$, where $j$ is an indexing of the set ${\bf D}\setminus {\bf D'''}$. 

Then $B=\bigcup_{j} B_j$ and, by \eqref{be:al}, we have 
$$
\frac{\mu(\theta_S(B))}{\mu_{|A}(B)}=\mu(A)\frac{\sum_{j}\mu(\theta_S(B_j))}{\sum_{j}\mu(A\cap B_j)}= \mu(A)\frac{\sum_{j}\mu(\theta_S(B_j))}{\sum_{j}\mu(A\cap D\cap \tilde{A}_{\Psi}\cap B_j)}\in (1-\epsilon/5,1+\epsilon/5).
$$
This shows that $\theta_S$ is $\epsilon/5$ measure preserving on 
$Dom(\theta_S):=A\cap\tilde{A}_\Psi\cap\bigcup_{D\in {\bf D}\setminus {\bf D'''}}D$. Then, by the above choice of $A$, it follows that
$\mu(Dom(\theta_S))\geq (1-\epsilon)\mu(A)$. Notice that for $x\in A'_\Psi$, \eqref{eq:aas} and \eqref{eq:betk} imply that the $S$ names of $x$ and $\theta_Sx$ match up to en error at most $\epsilon S$. We define $\theta_S$  in an arbitrary way on
$M\setminus Dom(\theta_S)$ to get an $\epsilon/2$ -measure preserving map $\theta_S:A \to M$. This finishes the proof.
\end{proof}

So it remains to prove Proposition \ref{prop:match}. We will do it in the next subsection
\subsection{Proof of Proposition \ref{prop:match}}
The following proposition is the most important part of the proof. It is a generalization of the ,,Main Lemma'' in \cite{OrnsteinWeiss}.
Recall that for $z\in M$, $W^u(\xi,z)=C(\xi,G^+)z$. For a set $\exp(S)z\in M$, where $S\in \mf g^+$, we will use the following notation $Leb^+(\exp(S)z)=Leb^+(S)$, where $Leb^+$ denotes the $p$ dimensional Lebesgue measure on $\mf g^+$.

We have the following:
\begin{proposition}[Main Proposition]\label{mainlemma} Let $z,z'\in K_\epsilon$ (see ${\bf d3}$ in Lemma \ref{lem:ext}). There exists $N_5$ such that for every $S\geq N_5$ there exists an $\epsilon/100$ - $Leb^+$ measure preserving map $\tilde{\theta}:=\tilde{\theta}_{z,z',S}:W^u(\epsilon'',z)\to W^u(\epsilon'',z')$ such that  for every $x\in W^u(\epsilon'',z)$, we have
\be\label{eq:distham}
d_M(\Psi^sx, \Psi^s\tilde{\theta}x)\leq \epsilon \text{ for  } (1-\epsilon^2)\text{ proportion  of }s\in [0,S].
\ee
\end{proposition}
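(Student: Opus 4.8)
The plan is to construct $\tilde\theta$ by first matching the two local unstable leaves at a large time $N$ (to be chosen of order $e^{c/\epsilon}$ or so, with $S \gg N$), where the unstable pieces, after being flowed by $\Psi^N$, are equidistributed against the family of small cubes $\{D(\delta_k,x_j)\}$ from Lemma \ref{lem:ext} with $\delta_k = (\log k)^{-r'}$. Concretely: set $N \approx \log S$ (so that $\delta := (\log S)^{-r'} \approx N^{-r'}$ is the relevant cube size), apply Lemma \ref{eq:distcube} to both $z$ and $z'$ to get that the push-forwards $\Psi^N_*(Leb^+|_{W^u(\eps'',z)})$ and $\Psi^N_*(Leb^+|_{W^u(\eps'',z')})$ both assign to each cube $D(\delta,x_j)$ a mass within a factor $(1\pm\eps^2)$ of $Leb^+(C(\eps'',\mf g^+))\mu(D(\delta,x_j))$. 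Then, cube by cube, partition the portion of $\Psi^N W^u(\eps'',z)$ lying in $D(\delta,x_j)$ and map it, via the stable holonomy $hol_s$ \emph{inside} the cube $D(\delta,x_j)$ (Lemma \ref{lem:hol}, Remark \ref{rem:hol}), onto a subset of $\Psi^N W^u(\eps'',z')\cap D(\delta,x_j)$ of equal $Leb^+$-mass; rescaling the leftover discrepancy (which is $O(\eps^2)$ of the total by the two-sided equidistribution bound) gives a globally $\eps/100$-$Leb^+$-measure-preserving map $\hat\theta$ between $\Psi^N W^u(\eps'',z)$ and $\Psi^N W^u(\eps'',z')$. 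Finally set $\tilde\theta := \Psi^{-N}\circ \hat\theta\circ \Psi^{N}$; since $\Psi$ acts on each unstable leaf by a map whose Jacobian with respect to $Leb^+$ is (locally) constant, conjugating by $\Psi^N$ preserves the $\eps/100$-measure-preserving property up to adjusting constants, using Lemma \ref{lema:Jac}(iii) to pass between the Lie-algebra level and $M$.

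The orbit-closeness estimate \eqref{eq:distham} is then established by splitting the time interval $[0,S]$ into three regimes. For $s\in[0,N]$: here $\Psi^s x$ and $\Psi^s\tilde\theta x$ are both in $W^u$-leaves, and one must argue the distance stays $\leq\eps$ — this needs care because the unstable direction is expanded, not contracted, going forward; however, since $x$ and $\tilde\theta x$ differ (after time $N$) only by an element of $C(\delta,G^0\times G^-)$ with $\delta = N^{-r'}$, we instead run the comparison backwards: by construction $\Psi^N x$ and $\Psi^N\tilde\theta x$ lie in the same small cube $D(\delta,x_j)$ and differ by a stable-central element of size $\delta$, and Lemma \ref{lem:quasi} (with $r_0 = r'+1$, after adjusting $N$ vs $\delta_k$) guarantees $d_M(\Psi^{N-n}(\Psi^N x),\Psi^{N-n}(\Psi^N\tilde\theta x)) = d_M(\Psi^{N+?}\cdots)$ stays small — more precisely one applies Lemma \ref{lem:quasi} to the \emph{backward} orbit from time $N$ down to time $0$, using that $hol_s$ moved the point only within the cube. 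Actually the cleanest route: because $\Psi^N x$ and $\Psi^N \tilde\theta x$ are related by the stable holonomy inside a cube of size $\delta$, their backward orbits under $\Psi^{-1}$ (i.e.\ forward under $L_{-g}$) separate at most polynomially from the $\mf g^0$ part and contract on $\mf g^-$... one sees the two orbits stay within $\eps$ on $[0,N]$ for a suitable choice of $N$ relative to $\delta$. For $s\in[N,S]$: here $\Psi^s x$ and $\Psi^s \tilde\theta x$ differ by $\mathrm{Ad}(g)^{s-N}$ applied to a vector of size $\delta = (\log S)^{-r'}$ in $\mf g^0\oplus\mf g^-$; by Lemma \ref{lem:behad}(B),(C) this has size $\leq c_0 |s-N|^{r'}\cdot\delta \leq c_0 S^{r'}(\log S)^{-r'}$ — this is NOT automatically small, so the honest statement is that the bad times are those $s$ near $S$ where $(s-N)^{r'}\delta$ exceeds $\eps$, and the proportion of such $s$ in $[0,S]$ is $O((\log S)^{-1}) < \eps^2$ for $S$ large; this is exactly where the "$(1-\eps^2)$ proportion" rather than "all $s$" is needed. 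Hence \eqref{eq:distham} holds.

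The main obstacle is the interplay between the two length scales: the cube size $\delta = \delta_k$ must be small enough ($= (\log S)^{-r'}$, shrinking as $S\to\infty$) that after being transported polynomially by $\mathrm{Ad}(g)$ on $\mf g^0$ over time $S$ the displacement is still $\leq\eps$ for all but an $\eps^2$-proportion of times in $[0,S]$ — this forces $\delta$ to depend on $S$, which in turn forces $N \approx \log S$ so that Lemma \ref{eq:distcube} (which requires $N \geq \log k$ with $\delta_k = (\log k)^{-r'}$) actually applies; one must check these constraints are simultaneously satisfiable, which they are precisely because the equidistribution in Lemma \ref{eq:distcube} is \emph{polynomially effective} in $N$ while the center growth is only polynomial. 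A secondary technical point is verifying that conjugating the holonomy-based matching $\hat\theta$ by $\Psi^{\pm N}$ keeps it $\eps/100$-measure-preserving; this uses that $L_g$ restricted to an unstable leaf has locally constant Jacobian (it is a left translation composed with the exponential chart), together with Lemma \ref{lema:Jac}(iii) to move between $\mf g^+$ and $M$, and the fact that the holonomy distortion is $\leq\kappa_2\delta\cdot(\text{size})$ which is negligible compared to $\eps$ once $\delta$ is small. I would organize the write-up as: (1) fix $N,\delta$ in terms of $S,\eps$; (2) apply Lemma \ref{eq:distcube} to $z,z'$; (3) build $\hat\theta$ cube-by-cube via $hol_s$ and estimate its measure distortion; (4) set $\tilde\theta = \Psi^{-N}\hat\theta\Psi^N$ and transfer the measure-preserving property; (5) prove \eqref{eq:distham} by the three-regime time split above.
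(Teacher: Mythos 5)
Your overall architecture coincides with the paper's: push both local unstable leaves forward by $\Psi^N$, use Lemma \ref{eq:distcube} to see that both equidistribute among the small cubes $D(\delta_k,x_j)$, match them cube-by-cube via the stable holonomy inside each cube, and pull back. However, there is a genuine quantitative error in your choice of scales, and it kills the orbit-closeness estimate. You take $N\approx \log S$ and cube size $\delta=(\log S)^{-r'}$. Then, as you yourself compute, after the matching the points $\Psi^N x$ and $\Psi^N\tilde\theta x$ differ by a central-stable element of size $\approx\delta$, and $\mathrm{Ad}(g)^{s-N}$ applied to it has size up to $c_0(s-N)^{r_0}\delta$. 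The set of $s$ for which this stays below $\epsilon$ is $\{s: s-N\lesssim (\epsilon/\delta)^{1/r_0}\approx \epsilon^{1/r_0}\log S\}$, i.e.\ an interval of length $O(\log S)$ — a \emph{vanishing} proportion of $[0,S]$. Your claim that the \emph{bad} times have proportion $O((\log S)^{-1})$ is exactly inverted: with your scales it is the \emph{good} times that have proportion $O(\log S/S)$, so \eqref{eq:distham} fails completely.

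The fix, which is what the paper does, is to take $N=[\epsilon^3 S]$ (a fixed positive proportion of $S$, not $\log S$) and then choose $k$ with $\log k\approx N$, so that the cube size is $\delta_k=(\log k)^{-r'}\approx(\epsilon^3 S)^{-r'}$ — polynomially small in $S$, not merely logarithmically. Lemma \ref{eq:distcube} permits this because it equidistributes at time $N\geq\log k$ into cubes of size $(\log k)^{-r'}$, i.e.\ the exponential effectiveness of mixing buys you cube sizes polynomial in the time elapsed (your remark that the equidistribution is only ``polynomially effective'' undersells it and is precisely why you did not push $\delta$ down far enough). With $\delta_k\approx S^{-r'}$ and $r'=100r_0^2\gg r_0$, the central displacement satisfies $c_0 s^{r_0}\delta_k\lesssim S^{r_0-r'}\to 0$ for \emph{all} $s\in[0,S]$ (this is Lemma \ref{lem:quasi}), so every $s\in[N,S]$ is a good time and the only excluded times are $[0,N]$, an $\epsilon^3$ proportion — which also disposes of your awkward first regime $s\in[0,N]$: those times are simply discarded, and no backward-orbit argument is needed. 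A secondary imprecision: the portion of $W^u(\epsilon'',z)$ sent into a given cube is not an arbitrary set to be repartitioned, but a disjoint union of full $\mathrm{Ad}(g)^{-N}$-images of unstable plaques of $D_j$; the paper counts these plaques for $z$ and $z'$ (the counts agree up to a factor $1\pm\epsilon^{4/3}$ by the two-sided equidistribution) and matches them bijectively, discarding the excess — this is what makes the Jacobian bookkeeping clean.
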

Before we prove Proposition \ref{mainlemma} let us show how it implies Proposition \ref{prop:match}:
\begin{proof}[Proof of Proposition \ref{prop:match}:]
In the proof we will use that $\mu$ is $\epsilon^2$ close to $Leb^+\times Leb^0\times Leb^-$ (see (iii) in  Lemma \ref{lema:Jac}) around $e\in G$.
Fix $D\in  {\bf D}$ and $A\in \bigvee_{N_5}^N\Psi^i(\cP)$ satisfying the assumptions of Proposition \ref{prop:match} ($N_5$ to be specified below). Recall that
$$
D=C(\epsilon'',G^+)\times C(\epsilon'',G^0\times G^-)x_D\subset M.
$$
 We first use Lemma \ref{lem:intersect} to find a set $E\subset A$ such that $\mu(E)\geq (1-\epsilon^{6p})\mu(A)$ and $E$ intersects $D$ in an $\mf g^+$ tubular subset. 
 Since $E$ intersects $D$ in an $\mf g^+$ tubular subset it follows that
$$
E=C(\epsilon'',G^+)\times [E\cap C(\epsilon'',G^0\times G^-)]x_D.
$$

Let $\tilde{E}\subset E\cap C(\epsilon'',G^0\times G^-)$ be such that for every $x\in \tilde{E}$ we have  $Leb^+(C(\epsilon'',G^+)x\cap A_\Psi)\geq 1-\epsilon^{3p}$. By Fubini's theorem it follows that
$$
(Leb^{0}\times Leb^-)(\tilde{E})\geq (1-\epsilon^{3})(Leb^{0}\times Leb^-)(E\cap C(\epsilon'',G^0\times G^-))
$$
In what follows we will construct $\epsilon$- measure preserving maps on several domains. We always consider the induced measures on the domains considered.
We will first construct a map $\zeta:\tilde{E}\cap C(\epsilon'',G^0\times G^-)\to C(\epsilon'',G^0\times G^-)$ such that $\zeta$ is $\epsilon^{6p}$- $Leb^{0}\times Leb^-$ measure preserving. We will work in the Lie algebra $\mf g^0\oplus \mf g^-$. Notice that $\tilde{E}\cap C(\epsilon'',G^0\times G^-)=\exp(E'')$ for some measurable subset $E''\subset \mf g^0\oplus\mf g^-$. Let $\zeta_0:(E'',Leb^{0}\times Leb^-_{|_{E''}})\to \left(C(\epsilon'',\mf g^0\oplus\mf g^-),Leb^{0}\times Leb^-_{|_{C(\epsilon'',\mf g^0\oplus\mf g^-)}}\right)$ be an $\epsilon^{6}$ measure preserving map. Notice that such map always exists since $E''$ is a subset of Euclidean space, so we can approximate $E''$ by  disjoint cubes $\{Cub^E_i\}_{i=1}^{L(\epsilon)}$ (up to $\epsilon^{6}$) and then divide (up to $\epsilon^{6}$) the cube $C(\epsilon'',\mf g^0\oplus\mf g^-)$ into disjoint subcubes  $\{Cub_i\}_{i=1}^{L(\epsilon)}$, and set $Cub^E_i$ affinely on $Cub_i$ for $i=1,\ldots, L(\epsilon)$.
Let then  $\zeta:\tilde{E}x_D\to C(\epsilon'',G^0\times G^-)x_D$ be the lift of the map $\zeta_0$ (then $\zeta$ is also $\epsilon^{6}$ $Leb^0\times Leb^-$ measure preserving). 
We now define the map
$\theta_S:C(\epsilon'',G^+)\times \tilde{E}x_D\to C(\epsilon'',G^+)\times C(\epsilon'',G^0\times G^-)x_D$
\be\label{eq:amdn}
\theta_S(x,z)=\tilde{\theta}_{z, \zeta(z),S}(x),
\ee
(we extend $\theta_S$ to $E$ in an arbitrary way on $\tilde{E}^c$). Notice that by Fubini's theorem, $\theta_S$ is $\epsilon^2$ measure preserving. Therefore, on  $C(\delta,G^+)\times \tilde{E}_0x_D$, we have that $z, \zeta(z)$ satisfy the assumptions of Proposition \ref{mainlemma}. So by the definition of $\theta_S$ (restricted to $C(\delta,G^+)\times \tilde{E}_0x_D$ ) it follows that \eqref{eq:distham} holds which implies that \eqref{eq:aas} holds. This finishes the proof of Proposition \ref{prop:match}.

\end{proof}

So it only remains to prove Proposition \ref{mainlemma}. We will do this in a separate subsection.

\subsection{Proof of Proposition \ref{mainlemma}}\label{secmainlem}
In this section we will give the proof of Proposition \ref{mainlemma}. Before we do that we will give an outline of the proof to explain the main ideas.
\\
\textbf{Outline of the proof.} The main idea is to use effective equidistribution on cubes (see Lemma \ref{eq:distcube}), i.e.\ for a fixed time $S$ let $k$ be such that $S$ is of order $\log k$. Let $z,z'\in K_\epsilon$. Then for a fixed $j\in \{1,\ldots, N_k\}$ we consider the set of all points from $x\in W^u(\epsilon'',z)$ such that  $\Psi^Nx\in D_j$. By the invariance of foliations it follows that this set is a  union of (exponentially small) cubes in $W^u(\epsilon'',z)$ and each cube is mapped onto the unstable piece in $D_j$ as Figure 1 shows. The same holds for $z'$. Then there is a natural mapping $\theta$ between an exponentially small cube in $W^u(\epsilon'',z)$ and $W^u(\epsilon'',z')$, see Figure 1. It is the composition of the maps on the unstable pieces with the center-stable holonomy map in $D_j$. Notice that since the size of $D_j$ is a large power of $\frac{1}{\log k}$ (and the time $S$ is of order $\log k$), it follows that (on the two small cubes) $S$-orbits of points $\psi^Nx$ and $\psi^N\theta x$ are $\epsilon$ close -- indeed, they lie on the same central stable piece (because of the holonomy map) and the growth on the central stable piece is at most  polynomial in $S$ (see Lemma \ref{lem:quasi}).
\begin{figure}[h]
\begin{center}
\includegraphics[scale=0.50]{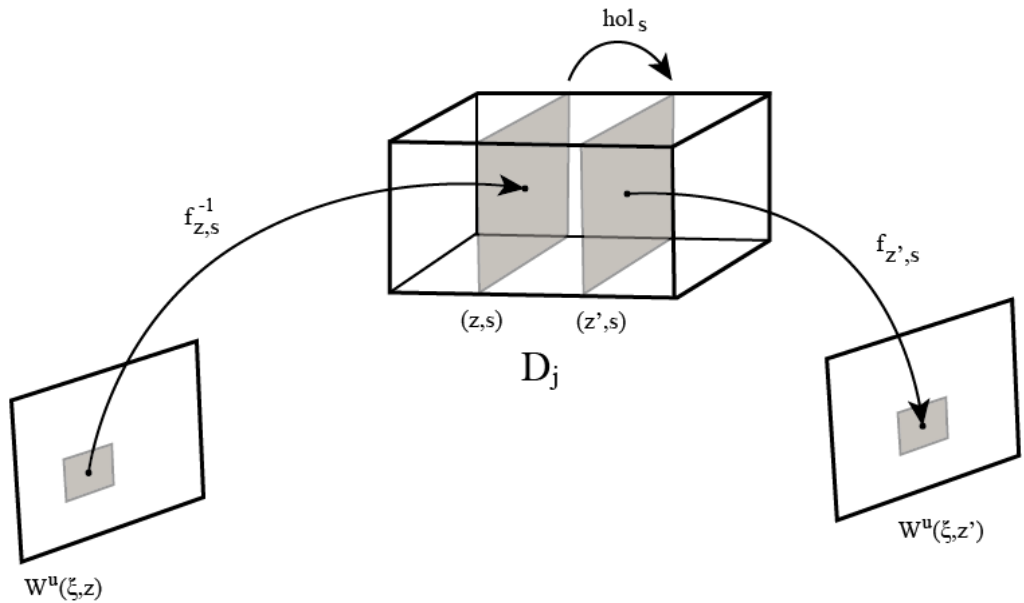}
		$\quad\quad\quad\quad$
		\caption{The matching function between two small cubes in $W^u(\epsilon'',z)$ and $W^u(\epsilon'',z')$}
		\end{center}
	\end{figure}

 The key observation is that for each $j\in \{1,\ldots,N_k\}$ the number of small cubes in $W^u(\epsilon'',z)$ and $W^u(\epsilon'',z')$ is the same up to an error of order $\epsilon^2$, see Figure 2. This is a consequence of quantitative equidistribution on $D_j$ (see Lemma \ref{eq:distcube}).  Moreover, if we consider the union over all $j$ of all small cubes in $W^u(\epsilon'',z)$ (or $W^u(\epsilon'',z')$), then again by Lemma \ref{eq:distcube}, they cover $W^u(\epsilon'',z)$ (or $W^u(\epsilon'',z')$) up to a proportion of $\epsilon^2$. Extending $\theta$ in a natural way over all $j\in \{1,\ldots,N_k\}$ yields a matching between $W^u(\epsilon'',z)$ and $W^u(\epsilon'',z')$.
\begin{figure}[h]
\begin{center}
\includegraphics[scale=0.38]{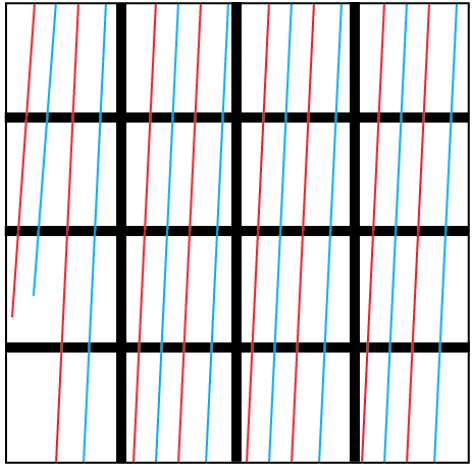}
		$\quad\quad\quad\quad$
		\caption{Images of $W^u(\epsilon'',z)$ (red) and $W^u(\epsilon'',z')$ (blue) under $\Psi^N$ are equidistributed in the cubes $D_j$ (white squares) up to an $\epsilon$ proportion.}
		\end{center}
	\end{figure}

\newpage
\begin{proof}[Proof of Proposition \ref{mainlemma}] Fix $S\geq \max(\epsilon^{-10}N_3, \epsilon^{-10}k_\epsilon)$.  Let $N=[\epsilon^3 S]$. Let $k\in \N$ be such that $N\in [\log k,\log(k+1)]$. Let $N_k$ and  ${\bf D}_j=D(\delta_k,x_j)$, $j=1,\ldots, N_k$ be the family of cubes from Lemma \ref{eq:distcube} with $\delta_k=\frac{1}{(\log k)^{r'}}$. Fix $j\in 1,\ldots, N_k$. By Lemma \ref{eq:distcube} with $\xi=\epsilon''$, we have 

for $w\in\{z,z'\}\subset K_{\epsilon}$, we have
\be\label{ee}
\left|\frac{Leb^+\left(\{P\in C(\epsilon'',\mf g^+)\;:\; \Psi^N(\exp(P)w)\in {\bf D}_j  \}\right)}{Leb^+(C(\epsilon'',\mf g^+))\mu({\bf D_j})}-1\right|\leq \epsilon^2.
\ee
Let $C_N\subset C(\delta,\mf g^+)$ be given by
\be\label{def:CN}
C_N:=C((1-\lambda^{-N})\delta,\mf g^+),
\ee
where $\lambda:=\frac{1}{2}\min_{|\lambda| >1}|\lambda|$. Notice that
\be\label{mesrt}
Leb^+(C(\delta,\mf g^+)\setminus C_N)\leq [1-(1-\lambda^{-N})^p]Leb^+(C(\delta,\mf g^+)).
\ee
Recall that $j\in 1,\ldots,N_k$ is now fixed.  Take any $P_j\in C_N$ such that  $\Psi^N(\exp(P_j)z)\in {\bf D}_j$, i.e.\
$$\Psi^N\exp(P_j)z=\exp(V_j)\exp(W_j)x_j$$
 for some $V_j\in C(\delta_k,\mf g^+)$ and $W_j\in C(\delta_k,\mf g^0\oplus \mf g^-)$.

 We now define a map $f_{P_j}:C(\delta_k,\mf g^+)\to C(\epsilon'',\mf g^+)$ which sends $V_j$ to $P_j$ and such that $f\left(C(\delta_k,\mf g^+)\right)\subset C(\epsilon'',\mf g^+)$ (in fact, as we will see below, the image of  $C(\delta_k,\mf g^+)$ will be an exponentially small set in $C(\epsilon'',\mf g^+)$). Let $R_Q:G \to G$, $R_Q(h):=h\exp(Q)$ and define\footnote{Notice that if $P=V=0 \in \mf g^+$, then $f_{P_j}(U)=Ad(g)^{-N}(U)$, which is the adjoint action on $U$. In our case we want the image of $V_j$ to be $P_j$ and therefore we need to shift twice by $\exp(P_j)$ and $\exp(-V_j)$.}
$$
f_{P_j}(U):=\left[\log_{alg}\circ R_{P_j}\circ\exp \circ Ad(g)^{-N}\circ\log_{alg}\circ R_{-V_j}\circ\exp\right](U).
$$
Notice that the inverse of this map is given by
\be\label{eq:inv}
f^{-1}_{P_j}(U):=\left[\log_{alg}\circ R_{V_j}\circ \exp\circ Ad(g)^{N}\circ\log_{alg}\circ R_{-P_{j}}\circ\exp\right](U)
\ee

Since $\|V_j\|,\|P_j\|\leq \epsilon''$ ( in fact, $\|V_j\|\leq \delta_k$), we have
$$
\left|\det[Jac(\log_{alg}\circ R_{P_j}\circ \exp)(\cdot)]\right|\in (1-\epsilon''^{1/3},1+\epsilon''^{1/3}).
$$
Similarly,
$$
\left|\det[Jac[(\log_{alg}\circ R_{-V_j}\circ \exp)(U)\right|\in (1-\epsilon''^{1/3},1+\epsilon''^{1/3}).
$$
Notice that in the proof we consider $Ad(g)^{-N}$ restricted to the space $\mf g^+$. To simplify the notation we use $Ad(g)^{-N}$ to denote $Ad(g)^{-N}_{|\mf g^+}$. Since $Ad(g)^{-N}$ is a linear map, we have (denoting $c_g:=\prod_{|\lambda|>1}|\lambda|\dim V_\lambda>1$
$$
\det Jac(Ad(g)^{-N}(\cdot))=|\det Ad(g)^{-N}|=c_g^{-N}.
$$
Summarizing, it follows that for $U\in C(\delta_k,\mf g^+)$, we have
\be\label{lab:jaces}
\left|\det[Jac(f_{P_j})](U)\right|\in [1-\epsilon''^{1/10},1+\epsilon''^{1/10}]c_g^{-N}.
\ee
Moreover, since $P_j\in C_N$, it follows that $f_{P_j}(C(\delta_k,\mf g^+))\subset C(\epsilon'',\mf g^+)$ (since $C(\delta_k,\mf g^+)$ is contracted exponentially by $f_{P_j}$, $f_{P_j}(V_j)=P_j$ and $P_j$ is separated from the boundary of $C(\epsilon'',\mf g^+)$ and $N$ is large enough, see \eqref{def:CN}).

Therefore, by the construction of the function $f_{P_j}$ and since $Ad(g)^N$ preserves the spaces $C(\cdot,\mf g^+)$ and the spaces $\mf g^+$ and $\mf g^0\oplus \mf g^-$ are transversal,  it follows that there exists $\{P^z_{j,s}\}_{s=1}^{\ell_j(z)}\in C_N$ such that  
\be\label{eq:coord}
\Psi^N\exp(P^z_{j,s})z=\exp(V^z_{j,s})\exp(W^z_{j,s})x_j
\ee
 for $s=1,\ldots,\ell_j(z)$ and
\begin{multline}\label{eq:znmes}
C_N \cap \{P\in C(\epsilon'',\mf g^+)\;:\; \Psi^N\exp(P)z\in {\bf D}_j  \}\subset \bigcup_{s=1}^{\ell_j(z)}f_{P^z_{j,s}}\left(C(\delta_k,\mf g^+)\right)\subset \\
\{P\in C(\epsilon'',\mf g^+)\;:\; \Psi^N\exp(P)z\in {\bf D}_j  \}.
\end{multline}
We get analogous sets $\{P^{z'}_{j,s}\}_{s=1}^{\ell_j(z')}$ for $z'$. Notice that by the bounds on the Jacobian, \eqref{lab:jaces}, it follows that
\be\label{maswq}
Leb^+\left(\bigcup_{s=1}^{\ell_j(z)}f_{P^z_{j,s}}\left(C(\delta_k,\mf g^+)\right)\right)\in \ell_j(z)[1-\epsilon''^{1/10},1-\epsilon''^{1/10}]c_g^{-N}.
\ee

Moreover, since $(1-(1-\lambda^{-N})^p)\leq \epsilon^2\mu({\bf D}_j)$ and $N\geq \log k$, by \eqref{ee} and \eqref{mesrt} it follows that
\begin{multline}\label{eq:afe1}
Leb^+\left(C_N \cap \{P\in C(\epsilon'',\mf g^+)\;:\; \Psi^N\exp(P)z\in {\bf D}_j \}\right)\geq \\
(1-\epsilon^2)Leb(C(\epsilon'',\mf g^+)\mu({\bf D}_j)- Leb^+(C(\epsilon'',\mf g^+)\setminus C_N)\geq (1-\epsilon^{3/2})Leb^+(C(\epsilon'',\mf g^+)\mu({\bf D}_j)
\end{multline}
This together with \eqref{eq:znmes}  and \eqref{maswq} implies that  (since we assume that $\epsilon''<\epsilon^{100}$)
\be\label{zest}
\ell_j(z)\in [1-\epsilon^{4/3},1+\epsilon^{4/3}] c_g^{N}Leb^+(C(\epsilon'',\mf g^+))\mu({\bf D}_j)
\ee
The same estimate holds for $\ell_j(z')$. Let $\ell_j:=\min(\ell_j(z),\ell_j(z'))$, $C(j,z,s)=f_{P^z_{j,s}}\left(C(\delta_k,\mf g^+)\right)$ and define
\be\label{eq:asdt}
 C(j,z):=\bigcup_{s=1}^{\ell_j}C(j,z,s)\subset C(\epsilon'',\mf g^+).
\ee

Since $\exp(W^{z}_{j,s})\in C(\delta_k,\mf g^0\oplus \mf g^-)$ and $\exp(W^{z'}_{j,s})\in C(\delta_k,\mf g^0\oplus \mf g^-)$ (see \eqref{eq:coord}) it follows that $\log_{alg}\left(\exp(W^{z}_{j,s})\exp(-W^{z'}_{j,s})\right)$ exists. Let $hol^{z,z'}_{j,s}:\mf g^+\oplus\{\log_{alg}\left(\exp(W^{z}_{j,s})\exp(-W^{z'}_{j,s})\right)\}\to \mf g^+$ be the stable holonomy defined in  \eqref{eq:holonomymap} (since $z,z'\in M$ are fixed throughout the proof, we drop it from the notation of the holonomy map). By \eqref{eq:holonomymap} and Lemma \ref{lem:hol} it follows that for $R\in C(\delta_k,\mf g^+)$, we have 
$$
\exp(R)\exp(W^{z}_{j,s})\exp(-W^{z'}_{j,s})=\exp(W''_{j,s})\exp(hol_{j,s}(R)),
$$
where $W''_{j,s}=hol_{uc}^{z,z',j,s}\left(R,\log_{alg}\left(\exp(W^{z}_{j,s})\exp(-W^{z'}_{j,s})\right)\right)\in \mf g^0\oplus\mf g^-$. This transforms to 
\be\label{neweq:hol}
\exp(R)\exp(W^{z}_{j,s})=\exp(W''_{j,s})\exp(hol_{j,s}(R))\exp(W^{z'}_{j,s})
\ee

Moreover, by Lemma \ref{lem:hol} it follows that 
$$
 \|W''_{j,s}-\log_{alg}\left(\exp(W^{z}_{j,s})\exp(-W^{z'}_{j,s})\right)\|\leq \kappa_2\|R\|\|\log_{alg}\left(\exp(W^{z}_{j,s})\exp(-W^{z'}_{j,s})\right)\|.
 $$
This together with the fact that $R\in C(\delta_k,\mf g^+)$, $W^{z}_{j,s}\in C(\delta_k,\mf g^0\oplus\mf g^-)$ and $W^{z'}_{j,s}\in C(\delta_k,\mf g^0\oplus\mf g^-)$ implies, using $(ii)$ in Lemma \ref{lema:Jac}, that 
\be\label{normW''}
 \|W''_{j,s}\|\leq 4\delta_k, \text{ and }W''_{j,s}\in \mf g^0\oplus\mf g^-.
\ee
By the same arguments and Lemma \ref{lem:hol}, we also have
 that if $\|R\|<(1-\epsilon''^{100})\delta_k$, then
 $\|hol_{j,s}(R)\|\leq \delta_k$. By throwing away a subset of measure $\epsilon^{100}Leb^+(C(\delta_k,\mf g^+)$,
 we restrict the domain of $hol_{j,s}$  so that the image is inside $C(\delta_k,\mf g^+)$.
Notice that by Lemma \ref{lem:hol}, we have  $|\det Jac\;hol_{j,s}|\in (1-\epsilon^4,1+\epsilon^4)$.
We define the following {\em matching map}  $\theta_{j,s}$ between $C(j,z,s)$ and $C(j,z',s')$.
\be\label{eq:tildaa}
\theta_{j,s}(R)= f_{P^{z'}_{j,s}}\circ  hol_{j,s}\circ f_{P^z_{j,s}}^{-1}(R)
\ee
We then naturally, using \eqref{eq:asdt}, extend the map to a map $\theta_j:C(j,z)\to C(j,z')$ and finally we extend it to a map $\theta_{z,z'}:\bigcup_{j=1}^{N_k}C(j,z)\to \bigcup_{j=1}^{N_k}C(j,z)$, since the sets $\{C(j,z)\}_{j=1}^{N_k}$ are pairwise disjoint (see the right inclusion in \eqref{eq:znmes} and recall that the sets $\{{\bf D}_j\}_{j=1}^{N_k}$ are disjoint). We will show that $\theta_{z,z'}$ is $\epsilon/1000$ measure preserving. Notice first that by \eqref{zest} it follows that $|\ell_j-\ell_j(z)|<2\epsilon^{4/3}\ell_j(z)$. Therefore and using \eqref{eq:afe1} and \eqref{eq:znmes}, we get that  the domain of $\theta_{j}$ has measure at least
\begin{multline*}
Leb^+\left(\bigcup_{s=1}^{\ell_j}C(j,z,s)\right)\geq Leb^{+}\left(\bigcup_{s=1}^{\ell_j(z)}C(j,z,s)\right)-Leb^{+}\left(\bigcup_{s=\ell_j}^{\ell_j(z)}C(j,z,s)\right)\\
\geq (1-\epsilon^{3/2}-2\epsilon^{4/3})Leb^+(C(\delta,\mf g^+))\mu({\bf D}_j).
\end{multline*}

Therefore the domain of  $\theta_{z,z'}$ has measure at least
$$
Leb^+(\bigcup_{j=1}^{N_k}C(j,z))\geq (1-\epsilon^{3/2}-2\epsilon^{4/3})Leb^+(C(\delta,\mf g^+))\mu(\bigcup_{j=1}^{N_k}{\bf D}_j)\geq (1-\epsilon^{11/10})Leb^+(C(\delta,\mf g^+)),
$$
since by definition of the family ${\bf D}_j$, we have $\mu(\bigcup_{j=1}^{N_k}{\bf D}_j)\geq (1-\epsilon^2)$ (see ${\bf d2}$ for $\epsilon'=\epsilon^2$ in Lemma \ref{lem:ext}). Therefore $\theta_{z,z'}$ is defined on a proportion $1-\epsilon^{11/10}$ of $C(\epsilon'',\mf g^+)$. Finally, we define
\be\label{tilea}
\tilde{\theta}_{z,z',S}(\exp(R)z)=\exp(\theta_{z,z'}(R))z',
\ee
We need to show that $\tilde{\theta}_{z,z',S}$ is $\epsilon/1000$- $Leb^+$ measure preserving and that \eqref{eq:distham} holds. To show the first part, it is enough to show that $\theta_{j,s}$ (see \eqref{eq:tildaa}) is $\epsilon/1000$-$Leb^+$ measure preserving. Note that  the Jacobian of $\exp$, $\log_{alg}$ is $\epsilon^5$ close to $1$. Moreover, the Jacobian of $hol_s$ is also $\epsilon^5$ close to $1$. Finally by \eqref{lab:jaces}, we get that$$
|\det[Jac(f_{P^{z'}_{j,s}})]\det[Jac(f^{-1}_{P^{z}_{j,s}})]|\in [(1-\epsilon^2)^2,(1+\epsilon^2)^2].
$$
So by the chain rule we have that
$$
|\det[Jac(\theta_{j,s})]|\in [1-\epsilon^2, 1+\epsilon^2].
$$
This shows that $\theta_{j,s}$ (and hence $\theta_{z,z',S}$) is $\epsilon/1000$ measure preserving. It remains to show that $\tilde{\theta}_{z,z',S}$ satisfies \eqref{eq:distham}. Take $x\in W^u(\epsilon'',z)$, $x=\exp(U_x)z$ and such that $\theta_{z,z',S}$ is defined on $x$ (notice that the measure of such points is at least $(1-\epsilon^{11/10})Leb^+(C(\epsilon'',\mf g^+))$).
Let $j\in \{1,\ldots,N_k\}$ and $s\in \{1,\ldots \ell\}$ be such that $x\in C(j,z,s)$.  Then by \eqref{tilea} and the definition of $\theta_{z,z'}$, $\tilde{\theta}_{z,z',S}(\exp(U_x)z)=\exp(\theta_{j,s}(U_x))z'$. Denote $\theta_{j,s}(U_x)=\tilde{U}_x$.

We have
\begin{multline}\label{niceform}
\Psi^N\exp(U_x)z=\\
\Psi^N\exp(\log_{alg}\left[\exp(U_x)\exp(-P^z_{j,s})\right])\Psi^{-N}\Psi^N\exp(P^z_{j,s})z=\\\exp\left(Ad(g)^{N}\circ\log_{alg}\circ R_{-P^z_{j,s}}(\exp(U_x)\right)\exp(V^z_{j,s})\exp(W^z_{j,s})x_j=\\
\exp\left(\log_{alg}\left[ \exp\left(Ad(g)^{N}\circ\log_{alg}\circ R_{-P^z_{j,s}}(\exp(U_x)\right)\exp(V^z_{j,s})\right]\right)\exp(W^z_{j,s})x_j.
\end{multline}
Hence the unstable component of $\Psi^N\exp(U_x)z$ equals (see \eqref{eq:inv})
\be\label{eds}
\log_{alg}\circ R_{V^{z}_{j,s}}\circ \exp\circ Ad(g)^{N}\circ\log_{alg}\circ R_{-P^{z}_{j,s}}\circ \exp](U_x)=f^{-1}_{P_{j,s}^{z}}(U_x)
\ee
Analogously,
\begin{multline}\label{niceform2}
\Psi^N\exp(\tilde{U}_x)z'=\\
\Psi^N\exp(\log_{alg}\left[\exp(\tilde{U}_x)\exp(-P^{z'}_{j,s})\right])\Psi^{-N}\Psi^N\exp(P^{z'}_{j,s})z=\\\exp\left(Ad(g)^{N}\circ\log_{alg}\circ R_{-P^{z'}_{j,s}}(\exp(\tilde{U}_x)\right)\exp(V^{z'}_{j,s})\exp(W^{z'}_{j,s})x_j=\\
\exp\left(\log_{alg}\left[ \exp\left(Ad(g)^{N}\circ\log_{alg}\circ R_{-P^{z'}_{j,s}}(\exp(\tilde{U}_x)\right)\exp(V^{z'}_{j,s})\right]\right)\exp(W^{z'}_{j,s})x_j.
\end{multline}
Hence the unstable component of $\Psi^N\exp(\tilde{U}_x)z$ equals (see \eqref{eq:inv})
\be\label{eds2}
\log_{alg}\circ R_{V^{z'}_{j,s}}\circ \exp\circ Ad(g)^{N}\circ\log_{alg}\circ R_{-P^{z'}_{j,s}}\circ \exp](\tilde{U}_x)=f^{-1}_{P_{j,s}^{z'}}(\tilde{U}_x).
\ee
Recall that  $\tilde{U}_x=\theta_{j,s}(U_x)$. So by \eqref{eq:tildaa} it follows that
\be\label{eq:adf}
f^{-1}_{P_s^{z'}}(\tilde{U}_x)= hol_{j,s}\circ f^{-1}_{P_s^{z}}(U_x).
\ee
Therefore, by \eqref{niceform} and \eqref{eds}, we get 
\be\label{expz}
\Psi^N\exp(U_x)z=\exp(f^{-1}_{P_{j,s}^{z}}(U_x))\exp(W_{j,s}^z)x_j
\ee
and by \eqref{niceform2}, \eqref{eds2} and\eqref{eq:adf}, we have
\be\label{expz'}
\Psi^N\exp(\tilde{U}_x)z'=\exp\left(hol_{j,s}\left(f^{-1}_{P_{j,s}^{z}}(U_x)\right)\right)\exp(W_{j,s}^{z'})x_j.
\ee
Moreover by \eqref{neweq:hol} for $R=f^{-1}_{P_{j,s}^{z}}$, for some $W''_{j,s}\in C(4\delta_k,\mf g^0\oplus\mf g^-)$ (see \eqref{normW''}) we have 
\be\label{expzzz}
 \exp\left(hol_{j,s}\left(f^{-1}_{P_{j,s}^{z}}(U_x)\right)\right)\exp(W_{j,s}^{z'})x_j=\exp(-W''_{j,s})\exp(f^{-1}_{P_{j,s}^{z}}(U_x))\exp(W_{j,s}^z)x_j.
 \ee
It follows that \eqref{expz}, \eqref{expz'} and \eqref{expzzz} give 
$$
\Psi^N\exp(U_x)z=\exp(W''_{j,s})(\Psi^N\exp(\tilde{U}_x)z').
$$
Therefore, for every $s\in [0,S-N]$, we have
$$
d_M(\Psi^{s+N}\exp(U_x)z,\Psi^{s+N}\exp(\tilde{U}_x)z')\leq d_G(\Psi^s(\exp(W''_{j,s})),\Psi^s(e))\leq \epsilon^2,
$$
since $s\leq S-N\leq S\leq \epsilon^{-3}N\leq \epsilon^{-3}\log (k+1)$, $W''_{j,s}\in C(4\delta_k, \mf g^0\oplus \mf g^-)$, $\delta_k=(\log k)^{-r'}$ and we use Lemma \ref{lem:quasi} remembering that $r'>r_0+1$ and $\epsilon^{-3}$ is small compared to $\log k$. This finishes the proof of Proposition \ref{mainlemma}.
\end{proof}
\section{Appendix: Proof of Lemma \ref{eq:distcube}}
Let $m_+$ denote the Haar measure on the expanding subgroup $G^+$. We will use the following modification of Proposition 2.4.8. in \cite{KlMa}:
\begin{proposition}\label{prop:klma}
There exists $\lambda, d, d', q>0$ such that for any $f\in C_c^{\infty}(G^+)$, $supp f\subset B^+(e,1)$, any $\phi\in C_c^\infty(M)$ and any compact set $K\subset M$, there exists a constant $C(K)$ such that for every $n\geq 0$ and every $x\in K$, we have 
\be\label{eq:ads}
\left|\int_{G^+}f(g)\phi(\Psi^n(gx))dm_+-\int_{G^+}f dm_+\int_M \phi d\mu\right|\leq C(K)[\max_{x\in M}\nabla \phi]^{d'}  \mathcal{S}_d(f)\mathcal{S}_d(\phi)t^qe^{-\lambda n}.
\ee
\end{proposition}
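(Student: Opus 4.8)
This proposition is the effective equidistribution of expanding horospherical translates under $\Psi$, and the plan is to follow the argument of \cite[Prop.~2.4.8]{KlMa}: \emph{thicken} the integral over $G^+$ into an honest integral over $M$ against a smooth function, apply the quantitative mixing bound of Lemma~\ref{lem:fff} to the action of $\Psi^n=L_{g^n}$, and then \emph{undo} the thickening. The only feature of the present setting not covered verbatim by the classical statement is that the subgroup $G^0G^-$ complementary to $G^+$ is not contracted by $\Psi$; along $\mf g^0$ the operator $\mathrm{Ad}(g)^n$ grows, but, by Lemma~\ref{lem:behad}\,\textbf{B},\textbf{C}, only polynomially in $n$, which is harmless against the exponential decay produced by mixing. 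With this understood, the inequality will hold with $d'=1$ (it suffices to work in the regime $\max_M|\nabla\phi|\geq1$ that occurs in the application).

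Concretely, first I would fix the compact set $K$, a lower bound $r_0=r_0(K)>0$ for the injectivity radius on $K$, a smooth bump $\chi\geq0$ on the unit ball of $\mf g^0\oplus\mf g^-$ with $\int\chi=1$, and set $\chi_\rho(W)=\rho^{-m}\chi(W/\rho)$ with $m=\dim(\mf g^0\oplus\mf g^-)$, where $\rho\in(0,r_0)$ is a thickening scale to be optimised at the end. Using the local product coordinates of Lemma~\ref{lema:Jac}$(i)$ I would build $\tilde F_\rho\in C_c^\infty(G)$, supported on the $\rho$-tube around $\mathrm{supp}\,f\subset B^+(e,1)\subset G^+$, equal to $\chi_\rho(W)f(\exp(U))$ up to the smooth Jacobian factor of $(W,U)\mapsto\exp(W)\exp(U)$, so arranged that $\int_G\tilde F_\rho\,d\mu_G=\int_{G^+}f\,dm_+$; here one uses that under $\exp$ the Haar measure $m_+$ of the unipotent group $G^+$ is Lebesgue measure. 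Periodising, $F_{\rho,x}(z\Gamma):=\sum_{\gamma\in\Gamma}\tilde F_\rho(z\gamma h_x^{-1})$ (with $x=h_x\Gamma$) is a well-defined element of $C_c^\infty(M)$; Lemma~\ref{lem:systole} and compactness bound the overlap multiplicity of the sum uniformly for $x\in K$, giving $\mathcal{S}_d(F_{\rho,x})\leq C(K)\,\rho^{-q_0}\,\mathcal{S}_d(f)$ for some $q_0=q_0(d,m)>0$ (the negative power of $\rho$ comes from differentiating $\chi_\rho$ in the $m$ thin directions), and by unfolding $\int_M F_{\rho,x}\,d\mu=\int_{G^+}f\,dm_+$.

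Then I would run the two comparisons. \emph{Thickening error:} unfolding $\int_M F_{\rho,x}(z)\phi(\Psi^n z)\,d\mu(z)$ into an integral over the tube and using
\[
\Psi^n(\exp(W)\exp(U)x)=\exp\!\big(\mathrm{Ad}(g)^nW\big)\cdot\Psi^n(\exp(U)x),\qquad\|\mathrm{Ad}(g)^nW\|\leq c_0(n^{r'}+1)\|W\|
\]
(the bound from Lemma~\ref{lem:behad}\,\textbf{B},\textbf{C}, since $W\in\mf g^0\oplus\mf g^-$), together with $\|W\|\leq\rho$, gives
\[
\Big|\int_M F_{\rho,x}(z)\phi(\Psi^n z)\,d\mu(z)-\int_{G^+}f(u)\phi(\Psi^n(ux))\,dm_+(u)\Big|\leq C(K)\,n^{r'}\rho\,\big(\max_M|\nabla\phi|\big)\,\mathcal{S}_d(f).
\]
\emph{Mixing:} Lemma~\ref{lem:fff} applied with the element $g^n$ and second function $F_{\rho,x}$, together with $d_G(g^n,e)\geq c\,n$ for some $c>0$ (because $\mathrm{Ad}(g)$ expands $\mf g^+$, so $\|\mathrm{Ad}(g)^n\|$ grows exponentially while $\mathrm{Ad}$ is locally Lipschitz), yields
\[
\Big|\int_M F_{\rho,x}(z)\phi(\Psi^n z)\,d\mu(z)-\int_M F_{\rho,x}\,d\mu\int_M\phi\,d\mu\Big|\leq C(K)\,e^{-\zeta c n}\,\rho^{-q_0}\,\mathcal{S}_d(\phi)\,\mathcal{S}_d(f).
\]
Since $\int_M F_{\rho,x}\,d\mu=\int_{G^+}f\,dm_+$, adding the two estimates and choosing $\rho:=e^{-\zeta c n/(2q_0)}$ bounds the left side of the proposition by $C(K)\,n^{r'}e^{-\lambda n}\big(\max_M|\nabla\phi|\big)\,\mathcal{S}_d(f)\,\mathcal{S}_d(\phi)$ for a suitable $\lambda>0$; this is the claimed inequality, with the surviving polynomial factor $n^{r'}$ either absorbed into a slightly smaller exponent or kept as the polynomial-in-$n$ factor $t^q$.

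The step I expect to cost real effort is \emph{not} the mixing input, which is quoted, but the explicit bookkeeping: tracking the linear dependence on $\max_M|\nabla\phi|$ through the thickening comparison; establishing $\mathcal{S}_d(\tilde F_\rho)\leq C\rho^{-q_0}\mathcal{S}_d(f)$ with $q_0$ independent of $f$; and, most delicately, making the constant $C(K)$ genuinely uniform over $x\in K$ — this rests on the uniform lower bound for the systole on $K$ (Lemma~\ref{lem:systole}), which controls both the overlap multiplicity of the periodisation $F_{\rho,x}$ and the distortion of the local product coordinates on which $\tilde F_\rho$ is built. The genuinely new point relative to \cite{KlMa}, namely that $G^0G^-$ is merely polynomially rather than non-expanding, enters only through the factor $n^{r'}$ in the thickening error and is benign.
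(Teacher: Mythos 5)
Your proposal is correct and follows essentially the same route as the paper, whose proof consists of citing Proposition 2.4.8 of Kleinbock--Margulis, noting that its argument only needs exponential mixing (Lemma \ref{lem:fff}) and that the constant there is explicit and uniform over $x\in K$ once the projection $\pi_x$ is injective on $\mathrm{supp}\,f$. You have simply written out the thicken--mix--unthicken argument that the paper invokes by reference, including the one genuinely new point (polynomial rather than bounded growth of $\mathrm{Ad}(g)^n$ on $\mf g^0$), which you handle correctly.
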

\begin{proof}The proof of the above proposition is analogous to the proof of Proposition 2.4.8. in \cite{KlMa}. There the authors consider a flow $(g_t)$ instead of an automorphism $\Psi$ but the important property used is the exponential mixing of $(g_t)$, which also holds in our case  for $\Psi$ (see Lemma \ref{lem:fff}). Moreover, the statement of  Proposition 2.4.8. is weaker, i.e.\ the authors claim the existence of a constant $C(f,\phi,K)$ such that \eqref{eq:ads} holds with $C(f,\phi,K)$ instead of $C(K)[\max_{x\in \Omega}\nabla \phi]^{d'}\mathcal{S}_d(f)\mathcal{S}_d(\phi)$. But the proof of Proposition 2.4.8. 
gives an explicit constant. First, by changing the constant (but only depending on $K$), one can assume that the projection maps $\pi_x:G\to M$, $\pi_x(g)=gx$ are injective on $supp\, f$ for every $x\in K$. If this is the case,  equation 2.4.8 in \cite{KlMa} and the estimates below explicitely give the constant $C(f,\phi,K)$ in the last line of the proof.
\end{proof}

Then Lemma \ref{eq:distcube} follows from approximating characteristic functions of $D(\delta_k,x_j)$ with smooth compactly supported functions, with good control on the Sobolev norms. Notice also, that close to $e\in G$ the measures $Leb^+$ and $m_+$ are close (see $(iii)$ in Lemma \ref{lema:Jac}). We will prove the following lemma:

 For $\epsilon>0$ and $B\subset M$, we denote the $\epsilon$ neighborhood $V_{\epsilon}(B):=\{x\in M\;:\; d_M(x,B)<\epsilon\}$. Analogously we define $\epsilon$ neighborhood  (in $G$) of a set $B\subset G$. Let $r'=100r_0^2$, where $r_0$ comes from Lemma \ref{lem:quasi}. For $\epsilon>0$ let $K_\epsilon$ be the set from ${\bf d3}$ in Lemma \ref{lem:ext}. Let $sys(\epsilon)=sys(K_{\epsilon})$ (see Lemma \ref{lem:systole}) and let 
$$
s(\epsilon):=\min((sys(\epsilon))^2,\epsilon^4).
$$
First we state the following approximation lemma.
\begin{lemma}\label{gooda} Let $\delta_k:=(\log k)^{-r'}$ and denote $\tilde{C}_k:=C(\delta_k,G^{+})\times C(\delta_k,G^{0}\times G^-)\subset G$. For every $\epsilon>0$ there exists $k_\epsilon$ such that for every $k\geq k_\epsilon$ there are  function $\tilde{\phi}^+_k,\tilde{\phi}^-_k\in C_c^{\infty}(G)$ such that the following holds:
\begin{itemize}
\item[$V0.$] $\tilde{\phi}^-_k\leq \chi_{\tilde{C}_k}\leq \tilde{\phi}_k^{+}$
\item[$V1.$] for $w\in\{+,-\}$,  $\int_G|\tilde{\phi}_k^w-\chi_{\tilde{C}_k}|d\mu_G<\epsilon^3\mu_G(\tilde{C}_k)$ ;
\item[$V2.$] for $w\in \{+,-\}$, $\tilde{\phi}^w_k\equiv 0$ outside $V_{s(\epsilon)}(\tilde{C}_k)$;
\item[$V3.$] there exists a global constant $R=R(d)>0$ such that $\mathcal{S}_d(\tilde{\phi}^w)\leq (\log k)^{R}$ for $w\in \{+,-\}$.
\end{itemize}
\end{lemma}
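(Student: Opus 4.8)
\medskip

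The plan is to build $\tilde\phi_k^{\pm}$ by convolving the characteristic function $\chi_{\tilde C_k}$ with a fixed smooth bump, after first slightly shrinking (for $\tilde\phi_k^-$) or enlarging (for $\tilde\phi_k^+$) the box $\tilde C_k$. Concretely, fix once and for all a nonnegative $\rho\in C_c^\infty(G)$ supported in the unit ball $B_G(e,1)$ with $\int_G\rho\,d\mu_G=1$, and for $\sigma>0$ put $\rho_\sigma(h):=\sigma^{-\dim\mf g}\rho(\exp(\sigma^{-1}\log_{\mathrm{alg}}h))$ (suitably normalized so $\int\rho_\sigma=1$), which is supported in $B_G(e,C\sigma)$ and satisfies $\mathcal S_d(\rho_\sigma)\le C_d\sigma^{-d-\dim\mf g/2}$ by the scaling behaviour of derivatives. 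I will choose the mollification scale $\sigma=\sigma_k$ much smaller than the side length $\delta_k=(\log k)^{-r'}$ of the box, say $\sigma_k:=\delta_k^{\,100}$; this is still a negative power of $\log k$, so it is compatible with $V3$, and it is easily made smaller than $s(\epsilon)$ once $k$ is large, which gives $V2$.

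\medskip

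The construction is then as follows. For the lower function, set $\tilde C_k^-:=C((\delta_k-2C\sigma_k),G^+)\times C((\delta_k-2C\sigma_k),G^0\times G^-)$ and $\tilde\phi_k^-:=\chi_{\tilde C_k^-}*\rho_{\sigma_k}$; for the upper function, set $\tilde C_k^+:=C((\delta_k+2C\sigma_k),G^+)\times C((\delta_k+2C\sigma_k),G^0\times G^-)$ and $\tilde\phi_k^+:=\chi_{\tilde C_k^+}*\rho_{\sigma_k}$. Standard properties of convolution with a nonnegative kernel of total mass one and support radius $C\sigma_k$ give $0\le\tilde\phi_k^\pm\le1$, that $\tilde\phi_k^-$ vanishes outside the $C\sigma_k$-neighbourhood of $\tilde C_k^-$ (hence inside $\tilde C_k$) and equals $1$ on the $C\sigma_k$-shrinking of $\tilde C_k^-$, and symmetrically $\tilde\phi_k^+\equiv1$ on $\tilde C_k$ and vanishes outside $V_{C\sigma_k}(\tilde C_k^+)\subset V_{s(\epsilon)}(\tilde C_k)$. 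This yields $V0$ and $V2$. For $V1$, the symmetric difference of $\tilde C_k$ with $\tilde C_k^\pm$ and with the $C\sigma_k$-neighbourhoods involved is contained in a boundary shell of the box of thickness $O(\sigma_k)$; using Lemma \ref{lema:Jac}(iii) to compare $\mu_G$ with Lebesgue measure near $e$, this shell has measure $O(\sigma_k/\delta_k)\cdot\mu_G(\tilde C_k)=O(\delta_k^{99})\mu_G(\tilde C_k)$, which is $\le\epsilon^3\mu_G(\tilde C_k)$ for $k$ large. Finally, for $V3$ one uses that $\mathcal S_d$ is submultiplicative under convolution in the sense $\mathcal S_d(f*\rho_{\sigma})\le \mathcal S_d(\chi_{\tilde C_k^\pm})\,\|\rho_{\sigma_k}\|_{1}$ is the wrong bound since $\chi$ is not smooth; instead I differentiate the convolution by putting all derivatives on the kernel, $\mathcal L_W(\chi_{\tilde C_k^\pm}*\rho_{\sigma_k})=\chi_{\tilde C_k^\pm}*\mathcal L_W\rho_{\sigma_k}$, so that $\|\mathcal L_W\tilde\phi_k^\pm\|_{L^2}\le \mu_G(\tilde C_k^\pm)^{1/2}\,\|\mathcal L_W\rho_{\sigma_k}\|_{L^1}\le C\,\sigma_k^{-d}$; since $r(\cdot)$ is bounded on the fixed compact set carrying everything and $\sigma_k^{-d}=(\log k)^{100 r' d}$, this gives $V3$ with $R:=100r'd+1$.

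\medskip

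The main technical point, and the only place where care is genuinely needed, is the bookkeeping in $V3$: one must make sure that every Lie-derivative $\mathcal L_W$ of degree $\le d$ of the convolution lands entirely on the mollifier (so no derivative ever hits the non-smooth $\chi$), and that the weight $r(\cdot)^d$ appearing in the definition of $\mathcal S_d$ is controlled — this is where the compact set $K_\epsilon$ from \textbf{d3} of Lemma \ref{lem:ext} enters, together with the fact that the supports of $\tilde\phi_k^\pm$ lie in a fixed neighbourhood of $e$ independent of $k$, so the injectivity radius is bounded below and above on the relevant region; pushing forward to $M$ by $\pi_x$ for $x\in K_\epsilon$ then keeps the Sobolev norm comparable, which is exactly what Proposition \ref{prop:klma} needs. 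Everything else is routine estimates on convolutions and on the Lebesgue measure of thin boundary shells of boxes.
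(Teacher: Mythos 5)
Your construction is correct and delivers exactly what the lemma asks for, but the technical realization differs from the paper's. The paper does not mollify by convolution: it writes down an explicit cutoff in exponential coordinates, taking a fixed smoothing function $\eta^\ell_\epsilon$ on $\R^\ell$ that is $1$ on $[-1,1]^\ell$, vanishes outside $[-1-s(\epsilon)^3,1+s(\epsilon)^3]^\ell$, and has $m$-th derivatives bounded by $s(\epsilon)^{-6\ell m}$, then rescales it by $\delta_k^{-1}$ and tensors the $\mf g^+$- and $\mf g^0\oplus\mf g^-$-factors; the Sobolev bound $\mathcal{S}_d\leq(\log k)^R$ then comes directly from the chain rule, since each derivative costs a factor $\delta_k^{-1}s(\epsilon)^{-6\ell}\leq\delta_k^{-12\ell}$. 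Your route — shrinking/enlarging the box by $O(\sigma_k)$ with $\sigma_k=\delta_k^{100}$ and convolving with a mollifier, then bounding $\mathcal{S}_d$ via Young's inequality with all derivatives on the kernel — buys a cleaner separation of the sandwich property ($V0$, $V2$) from the derivative estimates, at the price of two small non-abelian technicalities you should acknowledge: first, pushing $\mathcal{L}_W$ through the convolution $\int f(h)g(h^{-1}x)\,dh$ produces $\mathcal{L}_{\mathrm{Ad}(h^{-1})W}g$ rather than $\mathcal{L}_W g$, which is harmless only because $h$ ranges over a fixed small neighbourhood of $e$ so that $\mathrm{Ad}(h^{-1})$ has uniformly bounded coefficients in the fixed basis; second, your shells and support inclusions mix metric neighbourhoods with box enlargements, which are comparable near $e$ by Lemma \ref{lema:Jac}(ii) but only up to constants that must be absorbed into the choice of $2C\sigma_k$. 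Both approaches produce the same quantitative outcome (a polynomial-in-$\log k$ Sobolev norm, which is all that Proposition \ref{prop:klma} needs to beat the $e^{-\lambda N}$ decay), and your identification of $V3$ as the only delicate point matches where the paper's own effort is concentrated.
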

\begin{proof}
We will only show how to construct $\tilde{\phi}_k^+$, the construction of $\tilde{\phi}_k^-$ is analogous.

Let $\eta^\ell_\epsilon\in C^{\infty}(\R^\ell)$ be a {\em smoothing function} in $\R^\ell$, such that
$$
\eta^\ell_\epsilon({\bf v})=1 \text{ if } {\bf v}\in [-1,1]^\ell\l\l \text{, }\;\eta^\ell_\epsilon({\bf v})=0 \text{ if } {\bf v}\notin [-1-s(\epsilon)^3,1+s(\epsilon)^3]^\ell
$$
and the derivatives satisfy: for every $m\in \N$ and for every $i\in \{1,\ldots,\ell\}$,  
$$\left|\frac{\partial^m(\eta^\ell_\epsilon)}{\partial^m x_i}\right|\leq s(\epsilon)^{-6\ell m}
$$ Let then $\eta^\ell_{k,\epsilon}(x):=\eta_\epsilon^\ell\left(\frac{1}{\delta_k}x\right)$ (since $\epsilon$ is fixed, we will drop it from the notation). If we pick $k\geq k_\epsilon$, where $k_\epsilon$ is such that  $\delta^{-1}_{k_{\epsilon}}\geq \log k_\epsilon>s(\epsilon)^{6\ell}$, we have
\be\label{eq:asdfgh}
\left|\frac{\partial^m(\eta^\ell_\epsilon)}{\partial^m x_i}\right|\leq \delta_k^{-12\ell m}, \text{ for every }m\in \N \text{ and } i\in\{1,\ldots,\ell\}.
\ee
Let $\ell_1=\dim \mf g^+$ and $\ell_2=\dim \mf g^0+\dim \mf g^-$.
Define\footnote{For a  vector $U\in \mf g^+$ and a function $\eta:\R^{\dim \mf g^+}\to \R$, we denote $\eta(U):=\eta(a_1,\ldots a_{\dim \mf g^+})$, where $U=\sum a_i V_i$. We use analogous notation for $\mf g^0\oplus \mf g^-$.}
$$
\tilde{\phi}(\exp(U)\exp(H)):=\exp(\eta_k^{\ell_1}(U){\bf V_+})\exp(\eta_k^{\ell_2}(H){\bf V_{0,-}})
$$
if $\exp(U)\exp(H)\in V_{s(\epsilon)}(\tilde{C}_k)$ and set $\tilde{\phi}(g)=0$ otherwise. With this definition of $\tilde{\phi}$, properties $V0$ and $V2.$ hold automatically. Moreover by the bounds on the derivatives of  $\eta_k^{\ell_1}$ and $\eta_k^{\ell_2}$ (see \eqref{eq:asdfgh}) and the definition of Sobolev norms (see Section \ref{sec:sob}) we get that  there exist $R'=R'(d)>0$ and $R=rR'$ such that
$$
\mathcal{S}_d(\tilde{\phi})\leq \delta_k^{R'}=(\log k)^{rR'}\leq \log_k^{R}.
$$
this gives $V3$.
For $V1$ notice that $\tilde{f}(g)=1$ on $\tilde{C}_k$ and hence
$$
\int_G|\tilde{\phi}-\chi_{\tilde{C}_k}|d\mu(G)< \mu\left(V_{s(\epsilon)}(\tilde{C}_k)\setminus \tilde{C}_k\right)\leq \epsilon^3\mu(\tilde{C}_k),
$$
where the last inequality follows from $(iii)$ in Lemma \ref{lema:Jac} for  
$V_{s(\epsilon)}(\tilde{C}_k)$ and $\tilde{C}_k$.
This finishes the proof.

\end{proof}

Using Lemma \ref{gooda} and Proposition \ref{prop:klma} we can prove Lemma \ref{eq:distcube}.

\begin{proof} 
Let $\tilde{C}_k=C(\delta_k,G^{+})\times C(\delta_k,G^{0}\times G^-)\in G$. Then by definition,
$D\left(\delta_k,x_j\right)=\tilde{C}_kx_j\Gamma$.
Let
$$
\phi^+_j:V_{s(\epsilon)}(\tilde{C}_k)x_j\to \R,\;\;\;\;\phi^+_j(g):=\tilde{\phi}^+(gx_j^{-1}),
$$
where $\tilde{\phi}^+$ comes from Lemma \ref{gooda} for $\tilde{C}_k$. Notice that $\phi^+_j\equiv 0$ outside $V_{s(\epsilon)}(\tilde{C}_k)x_j$.
Moreover, for every $\gamma\in \Gamma\setminus\{e\}$, we have
\be\label{znb}
V_{s(\epsilon)}(\tilde{C}_k)x_j\cap V_{s(\epsilon)}(\tilde{C}_k)x_j\gamma=\emptyset.
\ee
Indeed, if $z=g_1x_j=g_2x_j\gamma$, with $g_1,g_2\in V_{s(\epsilon)}(\tilde{C}_k)$, then since $x_j\in K_{\epsilon'}$ (see ${\bf d3}$ in Lemma \ref{lem:ext}), we have
$$
0=d_G(g_1x_j,g_2x_j\gamma)\geq d(x_j\gamma x_j^{-1})-d(g_1,e)-d(g_2,e)\geq sys(\epsilon)-2s(\epsilon)-2\delta_k>0,
$$
since $s(\epsilon)<sys(\epsilon)^2$ and $k$ is large enough.
Since  $\phi^+_j\equiv 0$ outside $V_{s(\epsilon)}(\tilde{C}_k)x_j$ and by \eqref{znb}, we get that $\phi^+_j$ extends to a $\Gamma$- periodic function $\bar{\phi}^+_j$ such $\bar{\phi}^+_j=\phi^+_j$ in $V_\epsilon(\tilde{C}_k)x_j$. So $\bar{\phi}^+_j$ descends to a  $C_c^{\infty}$ function on $M$. By the construction of $\bar{\phi}^+_j$ and Lemma \ref{gooda} it follows that $V1$, $V2$, $V3$ hold with $G$, $\tilde{\phi}$, $\mu_G$ and $\tilde{C}_k$ replaced respectively by $M$, $\bar{\phi}_j$, $\mu$, $D\left(\delta_k,x_j\right)$. For $\xi\in (0,\epsilon)$ let $f_{+,\xi}\in C_c^\infty(G^+)$ be a function such  that 
\begin{itemize}
\item[$W0.$] $\chi_{\exp(C(\xi,\mf g^+))}\leq f_{+,\xi}$,
\item[$W1.$]  $\int_{G^+}|f_{+,\xi}-\chi_{\exp(C(\xi,\mf g^+))}|d\mu_+<\epsilon^3Leb^+(C(\xi,\mf g^+))$ ;
\item[$W2.$] $f_{+,\xi}\equiv 0$ on $V_{s(\epsilon)}(\exp(C(\xi,\mf g^+)))$;
\item[$W3.$] $\mathcal{S}_d(f_{+,\xi})\leq \xi^{-2d}$.
\end{itemize}

Then, since $m_+$ and $Leb^+$ are almost the same around $e$, 
\begin{multline*}
\int_{C(\xi,\mf g^+)} \chi_{D\left(\delta_k,x_j\right)}(\Psi^N\exp(H)x)dLeb^+\leq \\
\int_{C(\xi,\mf g^+)}f_{+,\xi}(\exp(H)) \bar{\phi}^{+}_j(\Psi^N\exp(H)x)dLeb^+\leq
(1+\epsilon^3)\int_{G^+}f_{+,\xi}(g)\bar{\phi}^{+}_j(\Psi^N gx)dm_+.
\end{multline*}
But by Proposition \ref{prop:klma} and Lemma \ref{gooda} and $W3.$, for $x\in K_\epsilon$ and $N\geq \log k$, we have 
\begin{multline*}
\left|\int_{G^+}f_{+,\xi}(g)\bar{\phi}^{+}_j(\Psi^N gx)dm_+-\int_{G^+}f_{+,\xi} dm_+\int_{G}\bar{\phi}^{+}_jd\mu\right|\leq\\
 C(K_\epsilon)[\max_{x\in \Omega}\nabla \bar{\phi}^{+}_j]^{d'}  \mathcal{S}_d(f_{+,\xi})\mathcal{S}_d(\bar{\phi}^{+}_j)t^qe^{-\lambda N}\leq \\
C(K_\epsilon)[\log k]^{2R} \xi^{-2d}k^{-\lambda}\leq \epsilon^{10} \int_{G^+}f_{+,\xi} dm_+\int_{G}\bar{\phi}^{+}_jd\mu, 
\end{multline*}
for $k$ large enough (largeness depending on $\epsilon$ and $\xi$). 
It remains to notice that by $V1.$, we have 
$$
\int_{G^+}f_{+,\xi} dm_+\int_{G}\bar{\phi}^{+}_jd\mu\leq (1+\epsilon^3)Leb^+(C(\xi,\mf g^+))\mu(D(\delta_k,x_j)).
$$
Combining the above inequalities, we get:
$$
Leb^+\left(H\in C(\xi,\mf g^+)\;:\; \psi^N\exp(H)x\in D\left(\delta_k,x_j\right)\right)\leq (1+\epsilon^3)^2(1+\epsilon^{10})
Leb^+(C(\xi,\mf g^+))\mu(D(\delta_k,x_j)).
$$
This give the LHS inequality in Lemma \ref{eq:distcube}. Analogously by considering $f_{-,\xi}$ and $\bar{\phi}^-_j$ we prove the right inequality. This finishes the proof of Lemma \ref{eq:distcube}.

\end{proof}

\end{document}